\documentclass[10pt, oneside]{article}
\usepackage{mathrsfs}
\usepackage{amsfonts}
\usepackage{amsmath,amsfonts,amssymb,amsthm}
\usepackage{caption}
\usepackage{subfigure}
\usepackage{cite}
\usepackage{color}
\usepackage{graphicx}
\usepackage{subfigure}
\usepackage{float}
\usepackage{paralist}
\usepackage{indentfirst}
\usepackage{authblk}
\usepackage[colorlinks,
linkcolor=red,
citecolor=blue
]{hyperref}

\usepackage[T1]{fontenc}

\newtheorem{theorem}{Theorem}[section]
\newtheorem{lemma}{Lemma}[section]
\newtheorem{prop}{Proposition}[section]

\newtheorem{remark}{Remark}[section]

\newtheorem{defn}{Definition}[section]


\def\bma#1\ema{{\allowdisplaybreaks\begin{split}#1\end{split}}}

\textheight=220truemm
%
\textwidth=160truemm

\hoffset=0truemm
\voffset=0truemm

\topmargin=-5truemm
 \oddsidemargin=0truemm
\evensidemargin=0truemm
\linespread{1.3}
\numberwithin{equation}{section}

\begin{document}
\title{{\LARGE \textbf{Global weak solutions for compressible Navier-Stokes-Vlasov-Fokker-Planck system}}}

\author[a,b]{Hai-Liang  Li  \thanks{
E-mail:		hailiang.li.math@gmail.com (H.-L. Li).}}

\author[a,b]{Ling-Yun Shou \thanks{E-mail: shoulingyun11@gmail.com(L.-Y Shou).}}
    \affil[a]{School of Mathematical Sciences,
	Capital Normal University, Beijing 100048, P.R. China}
\affil[b]{Academy for Multidisciplinary Studies, Capital Normal University, Beijing 100048, P.R. China}

\date{}
\renewcommand*{\Affilfont}{\small\it}
\maketitle
\begin{abstract}
The one-dimensional compressible Navier-Stokes-Vlasov-Fokker-Planck system with density dependent viscosity and drag force coefficients is investigated in the present paper. The existence,  uniqueness, and regularity of global weak solution to the initial value problem for general initial data are established in spatial periodic domain. Moreover, the long time behavior of the weak solution is analyzed. It is shown that as the time grows, the distribution function of the particles converges to the global Maxwellian, and both the fluid velocity and the macroscopic velocity of the particles converge to the same speed.
\end{abstract}
\noindent{\textbf{Key words:} Fluid-particle model; Compressible Navier-Stokes-Vlasov-Fokker-Planck; Hypoellipticity; Global existence; Large time behavior}
\section{Introduction}

Fluid-particle models have a wide range applications such as biosprays in medicine, chemical engineering, compressibility of droplets, fuel-droplets in combustion theory,  pollution settling processes, and polymers to simulate the motion of particles dispersed in dense fluids \cite{am1,berres2,caflisch1,jabin1,lin1,o1,williams2}.

In this paper, we consider the initial value problem (IVP) for the one-dimensional compressible Navier-Stokes-Vlasov-Fokker-Planck (NS-VFP) system 
\begin{equation}\label{m1}
\left\{
\begin{split}
& \rho_{t}+(\rho u)_{x}=0,\\
& (\rho u)_{t}+(\rho u^2)_{x}+(P(\rho))_{x}=(\mu(\rho)u_{x})_{x}-\int_{\mathbb{R}} \kappa(\rho)(u-v)fdv,\\
&f_{t}+vf_{x}+(\kappa(\rho)(u-v)f-\kappa(\rho)f_{v})_{v}=0, \quad (x,v)\in \mathbb{T}\times\mathbb{R},~~ t>0,~~~\mathbb{T}:=\mathbb{R}/\mathbb{Z},
\end{split}
\right.
\end{equation}
 with the initial data
 \begin{equation}
\begin{split}
&(\rho(x,0),\rho u(x,0),f(x,v,0))=(\rho_{0}(x),m_{0}(x),f_{0}(x,v)),\quad (x,v)\in \mathbb{T}\times\mathbb{R},\label{d}
\end{split}
\end{equation}
where $\rho=\rho(x,t)$ and $u=u(x,t)$ are the fluid density and velocity associated with the dense phase (fluid) respectively, and $f=f(x,v,t)$ denotes the distribution function associated with the dispersed phase (particles). The system $(\ref{m1})$ can be viewed as the compressible Navier-Stokes equations $(\ref{m1})_{1}$-$(\ref{m1})_{2}$ for the fluid and the Vlasov-Fokker-Planck equation $(\ref{m1})_{3}$ for the particles coupled each other through the drag force term $\kappa(\rho)(u-v)$. The pressure $P(\rho)$ and the viscosity coefficient $\mu(\rho)$ are given by
\begin{equation}\label{P}
\begin{split}
P(\rho)=A\rho^{\gamma},\quad\quad \mu(\rho)=\mu_{0}+\mu_{1}\rho^{\beta},
\end{split}
\end{equation}
and the drag force coefficient $\kappa(\rho)$ is chosen to be
\begin{equation}\label{kappa}
\begin{split}
\kappa(\rho)=\kappa_{0}\rho,
\end{split}
\end{equation}
where the constants $A$, $\mu_{0}$, $\mu_{1}$, $\kappa_{0}$, $\gamma$, and $\beta$ satisfy
$$
 A>0,\quad \mu_{0}>0,\quad \mu_{1}>0,\quad \kappa_{0}>0,\quad \gamma>1,\quad \beta\geq 0.
$$
Without loss of generality, we take $A=\mu_{0}=\mu_{1}=\kappa_{0}=1$ in the present paper.

There are a lot of important progress on the analysis of the global existence and dynamical behaviors of solutions for fluid-particle systems \cite{chae1,chae2,boudin3,choi1,choi3,goudon1,hamdache1,han2,hbk1,he1,jiang1,lf1,lhl1,lhl2,lhl3,lhl4,mellet1}. Among them, for incompressible NS-VFP equations, He \cite{he1} and Goudon-He-Moussa-Zhang \cite{goudon1} proved the global regularity and exponential decay rate of classical solutions in spatial periodic domain, and Chae-Kang-Lee \cite{chae1} showed the global existence of weak solutions in spatial whole space. For compressible NS-VFP system with constant drag force coefficient, the global existence of weak solutions to three-dimensional initial boundary value problem with the adiabatic constant $\gamma>\frac{3}{2}$ was obtained by Mellet-Vasseur \cite{mellet1}, the global well-posedness of strong solutions to Cauchy problem was established either for two-dimensional large initial data in \cite{hbk1} and for three-dimensional small initial data in \cite{chae2,jiang1} respectively, and the nonlinear time-asymptotical stability of planar rarefaction wave was investigated by Li-Wang-Wang \cite{lhl3}. As for compressible NS-VFP system with density-dependent drag force coefficient, Li-Mu-Wang \cite{lf1} got the global existence and time-decay estimates of strong solutions around the equilibrium state in both spatial periodic domain and spatial whole space, and Li-Sun-Zhang-Zhong \cite{lhl4} analyzed the Green function and pointwise behaviors of strong solutions to three-dimensional Cauchy problem. In addition, the compressible NS-VFP equations can be approximated by some macroscopical two-phase models as the fluid-dynamical limits, the readers can refer to \cite{carrillo1,choi4,mellet2,lhl3} and references therein.

In the present paper, we study the existence, uniqueness, regularity, and large time behavior of global weak solution to the IVP $(\ref{m1})$-(\ref{kappa}) for general initial data, as a continuation of the previous works \cite{lhl1,lhl2}. 
~\

First, we give the definition of global weak solutions to the IVP $(\ref{m1})$-$(\ref{kappa})$ as follows:

\begin{defn}\label{defn11}
$(\rho,u, f)$ is said to be a global weak solution to the IVP $(\ref{m1})$-$(\ref{kappa})$ provided for any $T>0$ that
\begin{equation}\label{r0}
\left\{
\begin{split}
&\rho\in L^{\infty}(0,T;L^{\gamma}(\mathbb{T})),\quad \sqrt{\rho} u\in L^{\infty}(0,T;L^2(\mathbb{T})),\quad \sqrt{\mu(\rho)}u_{x}\in L^2(0,T;L^2(\mathbb{T})),\\
&f\log{f} \in L^{\infty}(0,T;L^1(\mathbb{T}\times\mathbb{R})),\quad |v|^2f\in L^{\infty}(0,T;L^1(\mathbb{T}\times\mathbb{R})),
\end{split}
\right.
\end{equation}
 the equations $(\ref{m1})_{1}$-$(\ref{m1})_{3}$ are satisfied in the sense of distributions, and the following entropy inequality holds for a.e. $t\in[0,T]$:
\begin{equation}\label{entropyinequality}
\begin{split}
&\int_{\mathbb{T}}\big{(}\frac{1}{2}\rho |u|^2+\frac{\rho^{\gamma}}{\gamma-1}\big{)}(x,t)dx+\int_{\mathbb{T}\times\mathbb{R}}\big{(}\frac{1}{2}f|v|^2+f\log{f}\big{)}(x,v,t)dvdx+\int_{0}^{t}\int_{\mathbb{T}}(\mu(\rho)|u_{x}|^2)(x,\tau)dxd\tau\\
&\quad\leq \int_{\mathbb{T}}\big{(}\frac{1}{2}\frac{|m_{0}|^2}{\rho_{0}} +\frac{\rho_{0}^{\gamma}}{\gamma-1}\big{)}(x)dx+\int_{\mathbb{T}\times\mathbb{R}}\big{(}\frac{1}{2}f_{0}|v|^2+f_{0}\log{f_{0}}\big{)}(x,v)dvdx.
\end{split}
\end{equation}
\end{defn}

\vspace{2ex}
  Denote by $n$ and $nw$ the macroscopical density and momentum related to the moments of the solution $f$ to $(\ref{m1})_{3}$ as
\begin{equation}\label{nw}
\begin{split}
n(x,t)=\int_{\mathbb{R}} f(x,v,t)dv,\quad nw(x,t):=\int_{\mathbb{R}}vf(x,v,t)dv,
\end{split}
\end{equation}
with the initial values
\begin{equation}\nonumber
\begin{split}
n(x,0)=n_{0}(x):=\int_{\mathbb{R}} f_{0}(x,v)dv,\quad nw(x,0)=j_{0}(x):=\int_{\mathbb{R}}vf_{0}(x,v)dv.
\end{split}
\end{equation}
For $p\in[1,\infty]$, $k,l\geq 0$, and $\langle v\rangle:=(1+|v|^2)^{\frac{1}{2}}$, define
 \begin{equation}\nonumber
\left\{
\begin{split}
&L^{p}_{l}(\mathbb{T}\times\mathbb{R}):=\{\langle v\rangle^{l}g\in L^{p}(\mathbb{T}\times\mathbb{R})~\big{|}~ \|g\|_{L^{p}_{l}(\mathbb{T}\times\mathbb{R})}=\|\langle v\rangle^{l}g\|_{L^p(\mathbb{T}\times\mathbb{R})}<\infty\},\\
&H^{k}_{l}(\mathbb{T}\times\mathbb{R}):=\{\langle v\rangle^{l}g\in H^{k}(\mathbb{T}\times\mathbb{R})~\big{|}~\|g\|_{H^{k}_{l}(\mathbb{T}\times\mathbb{R})}= \|\langle v\rangle^{l}g\|_{H^{k}(\mathbb{T}\times\mathbb{R})}<\infty\}.
\end{split}
\right.
\end{equation}
We have the following global existence of weak solutions to the IVP $(\ref{m1})$-$(\ref{kappa})$ for general initial data which allows vacuum and can be discontinuous:
\begin{theorem}\label{theorem11}
Suppose that the initial data $(\rho_{0},m_{0},f_{0})$ satisfies
\begin{equation}
\begin{split}
&0\leq \rho_{0}\in L^{\infty}(\mathbb{T}),~~\frac{m_{0}}{\sqrt{\rho_{0}}}=0~~\text{in}~\{x\in\mathbb{T}~|~\rho_{0}(x)=0\},~~ \frac{m_{0}}{\sqrt{\rho_{0}}}\in L^2(\mathbb{T}),~~ 0\leq f_{0}\in L^1_{3}\cap L^{\infty}(\mathbb{T}\times\mathbb{R}).\label{a1}
\end{split}
\end{equation}
Then the IVP $(\ref{m1})$-$(\ref{kappa})$ admits a global weak solution $(\rho,u,f)$ in the sense of Definition \ref{defn11} satisfying for any $T>0$ that
\begin{equation}\label{r1}
\left\{
\begin{split}
&0\leq\rho(x,t)\leq \rho_{+},\quad\text{a.e.}~(x,t)\in\mathbb{T}\times[0,T],\\
& 0\leq f(x,v,t)\leq e^{\rho_{+}T}\|f_{0}\|_{L^{\infty}(\mathbb{T}\times\mathbb{R})},\quad\text{a.e.}~(x,v,t)\in\mathbb{T}\times\mathbb{R}\times[0,T],\\
&\underset{t\in[0, T]}{{\rm{ess~sup}}}~\Big{(} t^{\frac{1}{2}}\|u(t)\|_{H^1(\mathbb{T})}+\| f(t)\|_{L^1_{3}(\mathbb{T}\times\mathbb{R})}\Big{)}+\|\sqrt{\rho}f_{v}\|_{L^2(0,T;L^2(\mathbb{T}\times\mathbb{R}))}\leq C_{T},  
\end{split}
\right.
\end{equation}
where $\rho_{+}>0$ is a constant independent of the time $T>0$, and $C_{T}>0$ is a constant dependent of the time $T>0$.

Moreover, the conservation laws of mass and momentum hold for $t\in[0,T]${\rm{:}}
\begin{equation}\label{massmomentum}
\left\{
\begin{split}
&\int_{\mathbb{T}}\rho(x,t)=\int_{\mathbb{T}}\rho_{0}(x)dx,\\
& \int_{\mathbb{T}\times\mathbb{R}}f(x,v,t)dvdx=\int_{\mathbb{T}}n(x,t)dx=\int_{\mathbb{T}}n_{0}(x)dx,\\
&\int_{\mathbb{T}}(\rho u+nw)(x,t)dx=\int_{\mathbb{T}}(m_{0}+j_{0})(x)dx,
\end{split}
\right.
\end{equation}
and $(\rho,u)$ converges to $(\overline{\rho_{0}}, \overline{u}(t))$ as $t\rightarrow\infty${\rm{:}}
\begin{equation}
\begin{split}
&\lim_{t\rightarrow\infty}\Big{(}\|(\rho-\overline{\rho_{0}})(t)\|_{L^{p}(\mathbb{T})}+\|\big{(}\sqrt{\rho}(u-\overline{u})\big{)}(t)\|_{L^2(\mathbb{T})}\Big{)}=0,\quad  p\in [1,\infty),\label{b1}
\end{split}
\end{equation}
where the asymptotical states $\overline{\rho_{0}}$ and $\overline{u}(t)$ are given by
\begin{equation}\label{overrho}
\begin{split}
\overline{\rho_{0}}:=\int_{\mathbb{T}}\rho_{0}(x)dx,\quad \overline{u}(t):=\frac{\int_{\mathbb{T}}\rho u(x,t)dx}{\int_{\mathbb{T}}\rho_{0}(x)dx}.
\end{split}
\end{equation}
\end{theorem}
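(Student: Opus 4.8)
The plan is to realize the weak solution as the limit of a regularized approximation, the core of the work being a family of a priori estimates that are uniform in the regularization parameters and, for the principal quantities, uniform in $T$. Following the scheme of \cite{lhl1,lhl2} I would first mollify the data so that $\rho_{0}$ is smooth with $\rho_{0}\geq\delta>0$, add an artificial viscosity $\varepsilon\rho_{xx}$ to $(\ref{m1})_{1}$, and regularize $(\ref{m1})_{3}$ by an $x$-diffusion $\varepsilon f_{xx}$ together with a large-velocity cut-off and a truncation of $u$ in the drag term, so that the kinetic equation becomes uniformly parabolic; the resulting quasilinear parabolic system is solved locally in time by a fixed-point argument, and the uniform-in-time lower and upper bounds for $\rho$ together with the energy estimate below extend it globally. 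Since this construction closely parallels \cite{lhl1,lhl2}, I would keep it brief and concentrate on the estimates.

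The central a priori bounds, all uniform in the approximation parameters, are the following. \emph{(i) Energy--entropy estimate}: testing $(\ref{m1})_{2}$ with $u$, multiplying $(\ref{m1})_{3}$ by $\frac{|v|^{2}}{2}+\log f+1$ and adding, the transport terms integrate to zero and the drag work is absorbed by completing the square in the Fokker--Planck dissipation, producing precisely the combined term $\int\rho|(u-v)\sqrt{f}-2(\sqrt{f})_{v}|^{2}$ appearing in \eqref{entropyinequality}; since the right-hand side of \eqref{entropyinequality} is a fixed constant (finite by \eqref{a1}), this yields the bounds of \eqref{r0} uniformly in $T$. \emph{(ii) Time-independent upper bound for $\rho$}: using the one-dimensional effective viscous flux $F=\mu(\rho)u_{x}-P(\rho)$, represented through $(\ref{m1})_{2}$ with the drag contribution controlled by the uniform energy bound, together with the transport identity $\frac{D}{Dt}\log\rho=-u_{x}=-(F+P(\rho))/\mu(\rho)$ and the sign of $P(\rho)/\mu(\rho)$, one obtains $0\leq\rho\leq\rho_{+}$ with $\rho_{+}$ depending only on the data. \emph{(iii) $L^{\infty}$ bound for $f$}: rewriting $(\ref{m1})_{3}$ as $f_{t}+vf_{x}+\rho(u-v)f_{v}-\rho f_{vv}=\rho f$, the maximum principle gives $0\leq f(t)\leq e^{\rho_{+}t}\|f_{0}\|_{L^{\infty}}$. \emph{(iv) Weighted moments}: multiplying $(\ref{m1})_{3}$ by $\langle v\rangle^{3}$ and using (iii), standard velocity-moment interpolation and Gronwall's inequality control $\|f(t)\|_{L^{1}_{3}}$. \emph{(v) Hypoelliptic gain}: multiplying $(\ref{m1})_{3}$ by $f$ gives $\frac{1}{2}\frac{d}{dt}\|f\|_{L^{2}}^{2}+\|\sqrt{\rho}f_{v}\|_{L^{2}}^{2}=\frac{1}{2}\int\rho f^{2}$, hence $\sqrt{\rho}f_{v}\in L^{2}(0,T;L^{2})$. \emph{(vi) Parabolic smoothing for $u$}: testing $(\ref{m1})_{2}$ with $tu_{t}$ and using (ii)--(v) to handle the drag term yields $t^{1/2}\|u(t)\|_{H^{1}}\leq C_{T}$, the regularity compatible with only $\sqrt{\rho_{0}}u_{0}\in L^{2}$.

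For the passage to the limit, the uniform bounds together with $\rho_{t}=-(\rho u)_{x}$ give strong compactness of $\rho$ in $C([0,T];L^{p})$, and the one-dimensional form of the compactness machinery for compressible Navier--Stokes (the effective viscous flux identity, or equivalently a Bresch--Desjardins-type bound on $\partial_{x}$ of a power of $\rho$, which can be taken uniform in time) identifies the weak limit of $P(\rho)$; the bound $t^{1/2}u\in L^{\infty}(0,T;H^{1})$ together with the energy estimate gives strong convergence of $u$ on $[\tau,T]\times\mathbb{T}$ for each $\tau>0$; for the kinetic part the velocity averaging lemma provides strong compactness of the moments $n$ and $nw$, which is exactly what is needed to pass to the limit in the drag term $\rho(un-nw)=\int\rho(u-v)f\,dv$, while $f$ itself converges weakly in $L^{2}(0,T;L^{2})$ (in the weighted norm) and weakly-$*$ in $L^{\infty}$. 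The entropy inequality survives the limit by weak lower semicontinuity of the convex functionals of $\sqrt{\rho}u$, $\sqrt{f}$, $|v|^{2}f$, etc. The conservation laws \eqref{massmomentum} follow by integrating $(\ref{m1})_{1}$ over $\mathbb{T}$, integrating $(\ref{m1})_{3}$ over $\mathbb{T}\times\mathbb{R}$ (all fluxes vanish), and adding the $x$-integral of $(\ref{m1})_{2}$ to the integral over $\mathbb{T}\times\mathbb{R}$ of $v$ times $(\ref{m1})_{3}$, in which the two drag contributions cancel exactly.

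For the large-time behavior, the free energy (the left side of \eqref{entropyinequality} without the dissipation integrals) is non-increasing, hence convergent, so $\int_{0}^{\infty}\big(\int\mu(\rho)u_{x}^{2}+\int\rho|(u-v)\sqrt{f}-2(\sqrt{f})_{v}|^{2}\big)\,d\tau<\infty$; combining this with the regularity estimates (which supply a modulus of continuity in time for the dissipation) gives $\|u_{x}(t)\|_{L^{2}}\to0$, and then the Poincar\'e inequality on $\mathbb{T}$ together with $\|\sqrt{\rho}(u-\overline{u})\|_{L^{2}}^{2}\leq 2\rho_{+}\|u-\langle u\rangle_{\mathbb{T}}\|_{L^{2}}^{2}+C\|u_{x}\|_{L^{2}}^{2}$ (which uses $\int\rho=\overline{\rho_{0}}$ and $\int\rho u=\overline{u}\,\overline{\rho_{0}}$) gives $\|\sqrt{\rho}(u-\overline{u})(t)\|_{L^{2}}\to0$; finally the time-uniform regularity of $\rho$ makes $\{\rho(t)\}$ relatively compact in $L^{p}$, and $u_{x}(t)\to0$ forces every limit point, via the continuity and momentum equations, to be a constant, which must equal $\overline{\rho_{0}}$ by conservation of mass, yielding \eqref{b1}. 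I expect the main obstacles to be the \emph{time-independent} upper bound $\rho_{+}$, namely carrying the drag term through the one-dimensional effective-flux argument while keeping the constant independent of $T$, and the kinetic compactness (velocity averaging) needed to pass to the limit in the coupling term with no a priori $x$-regularity on $f$; the large-time homogenization of $\rho$ is the remaining delicate point, since $\rho$ is only transported and its convergence must be extracted from the decay of $\|u_{x}\|_{L^{2}}$ plus compactness rather than from a direct dissipation mechanism.
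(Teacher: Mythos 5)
Your existence scheme (energy--entropy identity, maximum principle for $f$, $L^{1}_{3}$ moments, $\sqrt{\rho}f_{v}\in L^{2}$, $t$-weighted $u_{t}$ estimate, velocity averaging for $n,nw$) matches the paper, but two of the steps you leave as ``expected obstacles'' are exactly where the paper needs ideas you do not supply, and your argument for \eqref{b1} would fail. For the time-independent bound $\rho\le\rho_{+}$: a Lagrangian estimate on $\log\rho$ with the drag ``controlled by the uniform energy bound'' does not close, because the antiderivative of the drag $\rho(un-nw)$ is not bounded uniformly in time by the entropy (neither $u$ nor $n$ is uniformly bounded, and $\|n\|_{L^{2}}$, $\|nw\|_{L^{2}}$ are only $C_{T}$ by Lemma \ref{lemma23}). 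The paper first rewrites the momentum equation for the effective momentum $\rho\big(u+\mathcal{I}(n)\big)$, which makes the drag collapse to $\rho\int_{0}^{1}nw\,dy$, controlled uniformly through $\|f\|_{L^{1}_{2}}$, and then runs the Lagrangian argument for $\theta(\rho)=\int_{1}^{\rho}\mu(s)/s\,ds$ (not $\log\rho$, since $\mu=\mu(\rho)$) with Zlotnik's Lemma \ref{lemma61}. Similarly, for the strong convergence of the density: the Bresch--Desjardins alternative you invoke is not available under \eqref{a1} (only $\rho_{0}\in L^{\infty}$, no derivative information), and $\rho_{t}=-(\rho u)_{x}$ gives compactness only in $C([0,T];L^{p}_{weak})\cap C([0,T];H^{-1})$, not in $C([0,T];L^{p})$. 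Because $\mu$ depends on $\rho$, Lions' effective-flux argument alone is also insufficient; the paper's extra ingredient is the $L^{p}(0,T;L^{\infty}(\mathbb{T}))$, $p<\tfrac43$, bound on $\rho^{\gamma}-\mu(\rho)u_{x}$ of Lemma \ref{lemma24}, which drives the Gr\"onwall argument for $\underline{\rho^{2}}-\rho^{2}$ via renormalized continuity equations. Without that bound (or a substitute) your compactness step for $\rho$ is incomplete.

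The large-time statement \eqref{b1} is the most serious gap. You deduce $\|u_{x}(t)\|_{L^{2}}\to0$ from the time-integrability of the dissipation ``plus a modulus of continuity in time,'' but under \eqref{a1} the only quantitative regularity is $t^{1/2}\|u(t)\|_{H^{1}}\le C_{T}$ with $C_{T}$ growing in $T$, so no uniform-in-time continuity of $t\mapsto\|u_{x}(t)\|_{L^{2}}$ is available and integrability alone does not yield decay. You then need $\{\rho(t)\}$ relatively compact in $L^{p}$ and an identification of its limit points as constants via the momentum equation; neither is justified here (no spatial regularity of $\rho$ uniform in time is proved under Theorem \ref{theorem11} --- the $H^{1}$ bound belongs to Theorem \ref{theorem12} --- and identifying the limit of the pressure gradient is precisely the hard point). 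The paper avoids all of this (Proposition \ref{prop41}) by a relative entropy $\mathcal{E}^{\eta}_{NS}$ containing the cross term $-\eta\int_{\mathbb{T}}\rho(u-m_{1})\mathcal{I}(\rho-\overline{\rho_{0}})\,dx$, chosen so that the associated dissipation controls $\mathcal{E}^{\eta}_{NS}$ itself up to the kinetic dissipation; then $\int_{0}^{\infty}\mathcal{E}^{\eta}_{NS}\,dt<\infty$ together with the almost-decay inequality, integrated in $s$ over $[t-1,t]$, forces $\mathcal{E}^{\eta}_{NS}(t)\to0$, which gives both limits in \eqref{b1} at once, without any decay of $\|u_{x}\|_{L^{2}}$ or compactness of $\rho$ in time. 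You would need to adopt this (or an equivalent quantitative) mechanism for the asymptotics to go through.
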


~\

Then, we study the regularity and uniqueness of the global weak solution $(\rho, u, f)$ given by Theorem \ref{theorem11}, and analyze the large time behavior of $(\rho, u, f)$ to the equilibrium state 
\begin{equation}\nonumber
\begin{split}
(\overline{\rho_{0}},u_{c},M_{\overline{n_{0}},u_{c}}(v)),
\end{split}
\end{equation}
 where $u_{c}$ is the constant
\begin{equation}\label{rhoinfty}
\begin{split}
& u_{c}:=\frac{\int_{\mathbb{T}}(m_{0}+j_{0})(x)dx}{\int_{\mathbb{T}}(\rho_{0}+n_{0})(x)dx},
\end{split}
\end{equation}
 $M_{\overline{n_{0}},u_{c}}(v)$ is the global Maxwellian
\begin{equation}\label{M}
\begin{split}
M_{\overline{n_{0}},u_{c}}(v):=\frac{\overline{n_{0}}}{\sqrt{2\pi}}e^{-\frac{|v-u_{c}|^2}{2}},
\end{split}
\end{equation}
and $\overline{n_{0}}$ is the constant 
\begin{equation}\label{n0}
\begin{split}
\overline{n_{0}}:=\int_{\mathbb{T}}n_{0}(x)dx.
\end{split}
\end{equation}

\begin{theorem}\label{theorem12}
Suppose that the initial data $(\rho_{0},m_{0},f_{0})$ satisfies
\begin{equation}
\begin{split}
&\inf_{x\in\mathbb{T}}\rho_{0}(x)>0, \quad \rho_{0}\in H^1(\mathbb{T}),\quad \frac{m_{0}}{\rho_{0}}\in H^1(\mathbb{T}),\quad 0\leq f_{0}\in L^{2}_{k_{0}}\cap L^{\infty}(\mathbb{T}\times\mathbb{R}), \label{a2}
\end{split}
\end{equation}
where $k_{0}>\frac{7}{2}$ is a constant. Then the global weak solution $(\rho,u,f)$ to the IVP $(\ref{m1})$-$(\ref{kappa})$ given by Theorem \ref{theorem11} is unique. In addition to the properties $(\ref{r1})$-$(\ref{b1})$, it holds for any $T>0$ that
 \begin{equation}\label{r2}
\left\{
\begin{split}
&~~\rho(x,t)\geq \rho_{-}>0,\quad\text{a.e.}~ (x,t)\in\mathbb{T}\times[0,T],\quad \sup_{t\in[0,T]}\|\rho(t)\|_{H^1(\mathbb{T})}\leq C_{0},\\
&~~\|u\|_{L^2(0,T;H^2(\mathbb{T})}+\|u_{t}\|_{L^2(0,T;L^2(\mathbb{T}))}+\|f_{v}\|_{L^2(0,T;L^2_{k_{0}}(\mathbb{T}\times\mathbb{R}))}\leq C_{T},\\
&\underset{t\in[0, T]}{{\rm{ess~sup}}}~\Big{(}\|u(t)\|_{H^1(\mathbb{T})}+t^{\frac{1}{2}}\|u(t)\|_{H^2(\mathbb{T})}+\|f(t)\|_{L^2_{k_{0}}(\mathbb{T}\times\mathbb{R})}\Big{)}\leq C_{T},\\
 &\underset{t\in[0, T]}{{\rm{ess~sup}}}~\Big{(} t^{\frac{1}{2}}\|f_{v}(t)\|_{L^2_{k_{0}-2}(\mathbb{T}\times\mathbb{R})}+t\|f_{vv}(t)\|_{L^2_{k_{0}-2}(\mathbb{T}\times\mathbb{R})}+t^{\frac{3}{2}}\|(f_{x},f_{vvv})(t)\|_{L^2_{k_{0}-2}(\mathbb{T}\times\mathbb{R})}\Big{)}\leq C_{T},
\end{split}
\right.
\end{equation}
 where $\rho_{-}>0$ and $C_{0}>0$ are two constants independent of the time $T>0$, and $C_{T}>0$ is a constant dependent of the time $T>0$.
 
Furthermore, the solution $(\rho,u,f)$ satisfies
\begin{equation}\label{b2}
\left\{
\begin{split}
&\lim_{t\rightarrow\infty}\Big{(}\|(\rho-\overline{\rho_{0}})(t)\|_{L^{\infty}(\mathbb{T})}+\|(u-u_{c})(t)\|_{L^2(\mathbb{T})} \Big{)}=0,\\
&\lim_{t\rightarrow\infty} \Big{(}\|(f-M_{\overline{n_{0}},u_{c}})(t)\|_{L^1(\mathbb{T}\times\mathbb{R})}+\|(n-\overline{n_{0}})(t)\|_{L^1(\mathbb{T})}+\|\big{(}n(w-u_{c})\big{)}(t)\|_{L^1(\mathbb{T})} \Big{)}=0,\\
\end{split}
\right.
\end{equation}
\end{theorem}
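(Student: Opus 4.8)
The plan is to upgrade the weak solution of Theorem~\ref{theorem11} under the hypotheses~(\ref{a2}) by deriving higher-order \emph{a priori} estimates on the approximating sequence used to construct it, to obtain uniqueness from a stability estimate on the difference of two solutions, and finally to combine the entropy dissipation with the uniform-in-time bounds to prove the asymptotics~(\ref{b2}). On the fluid side, since $\inf_x\rho_0>0$ one propagates a positive lower bound along the characteristics $\dot X=u(X,t)$, on which $\tfrac{d}{dt}\rho=-\rho u_x$; the control of $\int_0^t u_x$ comes from the one–dimensional effective viscous flux $\mu(\rho)u_x-P(\rho)$ together with a Bresch--Desjardins-type entropy, which also yields $\sup_t\|\rho(t)\|_{H^1}\le C_0$ after differentiating $(\ref{m1})_1$ in $x$ and testing with $\rho_x$ (the extra dissipative term $\mu_0>0$ and the $L^\infty$-in-time entropy bound being what make $\rho_-$ and $C_0$ independent of $T$). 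With $\rho_-\le\rho\le\rho_+$ and $\rho\in H^1$ in hand, testing the momentum equation $(\ref{m1})_2$ with $u_t$—using $\|\int_{\mathbb R}\rho(u-v)f\,dv\|_{L^2}\lesssim\|n\|_{L^\infty}\|u\|_{L^2}+\|nw\|_{L^2}$, controlled by the moments of $f$ from~(\ref{r1})—gives $\int_0^T\!\!\int\rho|u_t|^2$ and $\sup_t\int\mu(\rho)u_x^2$; then $\mu(\rho)u_{xx}=\rho u_t+\rho uu_x+P(\rho)_x-\mu'(\rho)\rho_xu_x+\int_{\mathbb R}\rho(u-v)f\,dv$ gives $u\in L^2(0,T;H^2)$, and testing the $t$-differentiated momentum equation with $t\,u_t$ gives the weighted bound $t^{1/2}\|u(t)\|_{H^2}$, consistent with $u_0=m_0/\rho_0\in H^1$ only.

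The bounds on $f$ are the technical core. A weighted estimate for $(\ref{m1})_3$—test with $\langle v\rangle^{2k_0}f$, integrate $vf_x$ by parts in $x$ (it vanishes on $\mathbb T$) and the Fokker--Planck part by parts in $v$—produces the dissipation $\int\!\!\int\rho\langle v\rangle^{2k_0}|f_v|^2$ against lower-order weight-generated terms, so $\sup_t\|f(t)\|_{L^2_{k_0}}+\|f_v\|_{L^2(0,T;L^2_{k_0})}\le C_T$; here $\kappa(\rho)=\rho\ge\rho_->0$ is essential, since it makes the $v$-diffusion nondegenerate. The remaining bounds must deal with the commutator $[\partial_t+v\partial_x,\partial_v]=\partial_x$: differentiating the equation once in $v$ leaves a source $f_x$ in the $f_v$-equation, so $x$-regularity is recovered only through the hypoelliptic (Kolmogorov) smoothing at the slower rate $t^{3/2}$, versus $t^{1/2}$ per $v$-derivative. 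I would run a H\'erau--Villani-type argument with a modified functional of the form $\|f_v\|_{L^2_{k_0-2}}^2+a\,t\,\langle f_v,f_x\rangle+b\,t^2\|f_x\|_{L^2_{k_0-2}}^2$ (and its higher-order analogues for $f_{vv},f_{vvv}$), choosing $a,b$ so that the $-\|f_x\|_{L^2_{k_0-2}}^2$ generated by the cross term absorbs the bad contributions; the drop from weight $k_0$ to $k_0-2$ is forced by the factors of $v$ in the transport and drag terms. This yields the time-weighted estimates on $f_v,f_{vv},f_x,f_{vvv}$ in~(\ref{r2}). I expect this step to be the main obstacle.

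For uniqueness, let $(\rho_2,u_2,f_2)$ be the solution constructed above (now known to satisfy~(\ref{r2})) and $(\rho_1,u_1,f_1)$ an arbitrary weak solution in the sense of Definition~\ref{defn11} with the same data, and set $\bar\rho=\rho_1-\rho_2$, $\bar u=u_1-u_2$, $\bar f=f_1-f_2$. A weak–strong stability estimate (relative-entropy type, with the regular solution in the role of the reference) gives, schematically, $\tfrac{d}{dt}\|\bar\rho\|_{L^2}^2\lesssim(1+\|u_{2,x}\|_{L^\infty})\|\bar\rho\|_{L^2}^2+\|\rho_{2,x}\|_{L^3}\|\bar u\|_{H^1}\|\bar\rho\|_{L^2}$ from the continuity equation, $\tfrac{d}{dt}\|\sqrt{\rho_2}\bar u\|_{L^2}^2+\|\bar u_x\|_{L^2}^2\lesssim(\cdots)\big(\|\bar\rho\|_{L^2}^2+\|\bar u\|_{L^2}^2+\|\bar f\|_{L^2_\ell}^2\big)$ from the momentum equation (the drag difference and the $\mu(\rho_1)-\mu(\rho_2)$ term being where the regularity~(\ref{r2}) is spent), and a weighted $L^2_\ell$ estimate for $\bar f$ with $\ell<k_0$ whose right side contains $\bar\rho$ (through $\kappa(\rho)=\rho$), $\bar u$ (through the drag) and $\bar f$; summing with suitable constants and applying Gronwall forces all differences to vanish. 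The hypothesis $k_0>\tfrac72$ is exactly what puts the velocity moments of $f_1,f_2$ (up to order below $3$, via $\int\langle v\rangle^m f\,dv\le\|\langle v\rangle^{k_0}f\|_{L^2_v}\|\langle v\rangle^{m-k_0}\|_{L^2_v}$ for $m<k_0-\tfrac12$) into the spaces needed to close these estimates.

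For the asymptotics, $\sup_t\|\rho(t)\|_{H^1}\le C_0$ and the interpolation $\|\rho-\overline{\rho_0}\|_{L^\infty}^2\lesssim\|\rho-\overline{\rho_0}\|_{L^2}\,\|\rho-\overline{\rho_0}\|_{H^1}+\|\rho-\overline{\rho_0}\|_{L^2}^2$ upgrade the $L^p$-convergence of Theorem~\ref{theorem11} to $L^\infty$. For $u$ and $f$, note that the free energy $\mathcal F(t):=\int(\tfrac12\rho u^2+\tfrac{\rho^\gamma}{\gamma-1})\,dx+\int\!\!\int(\tfrac12 fv^2+f\log f)\,dvdx$ satisfies $\tfrac{d}{dt}\mathcal F=-\int\mu(\rho)u_x^2-\int\!\!\int\rho|(u-v)\sqrt f-2(\sqrt f)_v|^2\le0$ (using $(u-v)\sqrt f-2(\sqrt f)_v=\sqrt f\,[(u-v)-(\log f)_v]$), and that by the conservation laws~(\ref{massmomentum}) together with the Gibbs/Jensen inequality the constrained infimum of $\mathcal F$ is attained uniquely at $(\overline{\rho_0},u_c,M_{\overline{n_0},u_c})$. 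Integrability of the dissipation produces a sequence $t_n\to\infty$ along which $u_x(t_n)\to0$ in $L^2$ and $(u-v)\sqrt f-2(\sqrt f)_v(t_n)\to0$ in $L^2$; since $\rho\ge\rho_->0$ this forces every weak-$L^1$ limit of $f(t_n)$ to be a local Maxwellian of unit temperature with constant mean velocity, and the conservation laws—inherited by the $\omega$-limit set, which is invariant and on which $\mathcal F$ is constant—pin the constants, so by LaSalle's invariance principle (legitimate here because uniqueness makes the solution map a well-defined continuous semiflow) the $\omega$-limit set reduces to $\{(\overline{\rho_0},u_c,M_{\overline{n_0},u_c})\}$ and the whole trajectory converges. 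Then $\mathcal F(t)\downarrow\mathcal F_{\min}$ upgrades weak convergence of $f$ to $L^1(\mathbb T\times\mathbb R)$-convergence by a Csisz\'ar--Kullback/Scheff\'e argument, whence $n-\overline{n_0}\to0$ and $n(w-u_c)=\int_{\mathbb R}(v-u_c)f\,dv\to0$ in $L^1(\mathbb T)$, while $\sqrt\rho(u-\overline u)\to0$ together with $\overline u\to u_c$ gives $u\to u_c$ in $L^2(\mathbb T)$. Besides the hypoelliptic step for $f$, the other delicate point here is converting subsequential into full convergence, which the Lyapunov functional $\mathcal F$, the uniqueness of its minimizer, and LaSalle's principle handle.
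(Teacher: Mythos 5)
Your a priori estimates follow essentially the paper's route (Lemmas \ref{lemma24}--\ref{lemma26}): testing the momentum equation with $u_{t}$ and $tu_{t}$, the effective viscous flux, and a H\'erau--Villani functional coupling $t\|f_{v}\|_{L^2_{k_0-2}}^2$, $t^{2}\int\langle v\rangle^{2(k_0-2)}f_{v}f_{x}$ and $t^{3}\|f_{x}\|_{L^2_{k_0-2}}^2$ is exactly the paper's $L^{\eta}(t)$; one caveat is that $\sup_{t}\|\rho_{x}(t)\|_{L^2}$ is obtained there from the transport estimate for the effective velocity $U=u+\mathcal{I}(n)+\rho^{-2}\mu(\rho)\rho_{x}$ (note the drag correction $\mathcal{I}(n)$), not by testing the $x$-differentiated continuity equation with $\rho_{x}$, which needs $u_{x}\in L^{1}(0,T;L^{\infty})$ and is circular at that stage; the $T$-independent constants $\rho_{-},C_{0}$ come from Proposition \ref{prop42} via \cite{lhl2}. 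The first genuine gap is in your uniqueness argument. Since the diffusion coefficient in $(\ref{m1})_{3}$ is $\kappa(\rho)=\rho$, the equation for $\bar f=f_{1}-f_{2}$ carries the sources $(\rho_{1}-\rho_{2})f_{2vv}$ and $(\rho_{1}-\rho_{2})(u_{2}-v)f_{2v}$. In your weighted $L^{2}_{\ell}$ estimate these must be paired with $\langle v\rangle^{2\ell}\bar f$, and because $\rho_{1}-\rho_{2}$ is controlled only in $L^{2}_{x}$ you are forced to use $\sup_{x}\|\langle v\rangle^{\ell}f_{2v}(x,\cdot,t)\|_{L^{2}_{v}}$-type quantities, i.e.\ the mixed derivatives $f_{2vx}$ (or $f_{2vvx}$), which $(\ref{r2})$ provides only through $\int_{0}^{T}t^{3}\|f_{vx}\|^{2}_{L^2_{k_0-2}}dt\le C_{T}$; the resulting Gronwall coefficient behaves like $t^{-2}$ near $t=0$ and the estimate does not close. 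The paper's Proposition \ref{prop31} instead measures $\bar f$ in $L^{1}_{1}(\mathbb{T}\times\mathbb{R})$ (via the renormalization Lemma \ref{lemma610}), so the critical term is bounded by $\|\rho_{1}-\rho_{2}\|_{L^{2}_{x}}\|f_{2vv}\|_{L^{2}_{k_0-2}}$ with no $\sup_{x}$, and it is closed by combining the hypoelliptic bound $t\|f_{2vv}(t)\|_{L^{2}_{k_0-2}}\le C_{T}$ with the smallness $\|(\rho_{1}-\rho_{2})(t)\|_{L^{2}}\lesssim t^{1/2}$ from (\ref{widetilderho1}), so that $\int_{0}^{t}\tau^{-1/2}d\tau<\infty$. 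Your sketch neither isolates this term nor addresses the $t\to0$ singularity; also note the theorem, as proved, gives uniqueness within the class $(\ref{r1})$--$(\ref{r2})$, whereas comparison with an arbitrary Definition \ref{defn11} solution would require an additional weak--strong argument.

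The second gap concerns the long-time behavior of $f$. Your LaSalle plus Csisz\'ar--Kullback scheme requires (i) identification of weak-$L^{1}$ limits of $f(t_{n})$ and invariance of the $\omega$-limit set, and (ii) the convergence $\mathcal{F}(t)\downarrow\mathcal{F}_{\min}$. Neither follows from what you have. For (i) one must pass to the limit in the nonlinear dissipation and in the equation along time shifts $f^{s}=f(\cdot,\cdot,\cdot+s)$, which demands strong $L^{1}$ compactness in all variables; the classical compactness results of Bouchut--Dolbeault and DiPerna--Lions are not applicable here because $\rho$ and $u$ are not smooth (Remark \ref{remark12}), and the paper obtains the needed compactness by applying Ars\'enio--Saint-Raymond's hypoelliptic Lemma \ref{lemma66} to $\mathbf{1}_{|v|\le r}\sqrt{f^{s}+\delta}$, using the uniform $L^{2}$ bound on $(\sqrt{f^{s}})_{v}$. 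For (ii), $\mathcal{F}$ is only lower semicontinuous under the weak $L^{1}$ convergence you dispose of, and the only time-uniform controls on $f$ are $L^{1}_{2}$ and $L\log L$ (the $L^{\infty}$ and $L^{2}_{k_0}$ bounds in $(\ref{r1})$--$(\ref{r2})$ grow like $e^{\rho_{+}T}$), so you cannot conclude that the monotone limit of $\mathcal{F}$ is the constrained minimum; the Csisz\'ar--Kullback upgrade is therefore circular. The paper avoids both issues: $u\to u_{c}$ is proved by the $m_{1}$--$m_{2}$ dissipation/conservation argument of Proposition \ref{prop42}, and $f\to M_{\overline{n_{0}},u_{c}}$ follows in Proposition \ref{prop43} from the strong compactness of $f^{s}$ together with the identification of the unique limit through the limiting equations (\ref{equationf}), not through convergence of the free energy.
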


\vspace{1ex}

\begin{remark}\label{remark11}
Different from the compressible Navier-Stokes-Vlasov system {\rm{{\rm{\cite{lhl2}}}}}, the Vlasov-Fokker-Planck equation $(\ref{m1})_{3}$ has the regularizing effect $(\ref{r2})_{4}$ due to hypoellipticity of the nonlinear Fokker-Planck operator $v\partial_{x}-\rho \partial_{v}(v+\partial_{v})$. 
\end{remark}

\begin{remark}\label{remark12}
The distribution function $f$ lacks the uniform-time integrability under the assumptions of Theorem \ref{theorem12} since the term $f\log{f}$ in entropy for $(\ref{m1})$ may not be controlled by its dissipation, which is essentially different from the compressible Navier-Stokes-Vlasov model {\rm{\cite{lhl1}}}. To have the large time behavior $(\ref{b2})_{2}$, we employ the ideas by Bouchut-Dolbeault {\rm{\cite{bouchut1}}} to prove the $L^1(\mathbb{T}\times\mathbb{R})$-convergence of $f^{s}(x,v,t):=f(x,v,t+s)$ for $t\in(0,1)$ to the unique limit $M_{\overline{n_{0}},u_{c}}(v)$ as $s\rightarrow\infty$. It should be noted that the compactness lemmas in {\rm{\cite{bouchut1,diperna1}}} could not be applied here due to the non-smooth coefficients in $(\ref{m1})_{3}$. Indeed, because $(\sqrt{f^{s}})_{v}$ is uniformly bounded in $L^2(0,1;L^2(\mathbb{T}\times\mathbb{R}))$, we can apply the techniques developed by Ars${\acute{e}}$nio and Saint-Raymond {\rm{\cite{arsenio1}}} to obtain the strong convergence of $f^{s}$ in all variables.
\end{remark}

\begin{remark}\label{remark13}
By $(\ref{r2})$ and Theorem \ref{theorem11}, one can prove that the IVP $(\ref{m1})$-$(\ref{kappa})$ admits a unique global classical solution subject to regular initial data.
\end{remark}

\begin{remark}\label{remark14}
By Theorem \ref{theorem12} and similar arguments as used in {\rm{\cite{lhl2}}}, we are able to establish the global well-posedness of the IVP $(\ref{m1})$-$(\ref{kappa})$ in spatial real line.
\end{remark}

\begin{remark}\label{remark10}
Under the assumptions of Theorem \ref{theorem11}, if the initial density $\rho_{0}$ is strictly positive, then we can obtain the low bound of $\rho$ uniformly in time. In addition, as in Sections 4-5, the global weak solution $(\rho,u,f)$ satisfies the large time behavior $(\ref{b2})$ and the following entropy inequality:
\begin{equation}\label{entropyinequality0}
\begin{split}
&\int_{\mathbb{T}}\big{(}\frac{1}{2}\rho |u|^2+\frac{\rho^{\gamma}}{\gamma-1}\big{)}(x,t)dx+\int_{\mathbb{T}\times\mathbb{R}}\big{(}\frac{1}{2}f|v|^2+f\log{f}\big{)}(x,v,t)dvdx\\
&\quad\quad+\int_{0}^{t}\int_{\mathbb{T}}(\mu(\rho)|u_{x}|^2)(x,\tau)dxd\tau+\int_{0}^{t}\int_{\mathbb{T}\times\mathbb{R}}(\rho|(u-v)\sqrt{f}-2(\sqrt{f})_{v}|^2)(x,v,\tau)dvdxd\tau\\
&\quad\leq \int_{\mathbb{T}}\big{(}\frac{1}{2}\frac{|m_{0}|^2}{\rho_{0}} +\frac{\rho_{0}^{\gamma}}{\gamma-1}\big{)}(x)dx+\int_{\mathbb{T}\times\mathbb{R}}\big{(}\frac{1}{2}f_{0}|v|^2+f_{0}\log{f_{0}}\big{)}(x,v)dvdx.
\end{split}
\end{equation}
\end{remark}

\vspace{2mm}

The rest part of the paper is arranged as follows. In Section 2, we establish the a-priori estimates for the compressible NS-VFP system (\ref{m1}). The uniqueness of the weak solution will be proved in Section 3. Then, we  we analyze the large time behavior of global solutions in Section 4. In Section 5, we show the convergence of approximate sequence to the corresponding weak solution to the IVP (\ref{m1})-(\ref{kappa}). Finally, Section 6 is an appendix presenting some lemmas that are used in our analysis.

\section{The a-priori estimates}

\subsection{Basic estimates}

First,  by (\ref{m1}), we have the following properties:

\begin{lemma}\label{lemma21}
Let $T>0$, and $(\rho,u,f)$ be any regular solution to the IVP $(\ref{m1})$-$(\ref{kappa})$ for $t\in(0,T]$. Then it holds
\begin{align}
&\frac{d}{dt}\int_{\mathbb{T}}\rho(x,t) dx=0,\quad \frac{d}{dt}\int_{\mathbb{T}\times\mathbb{R}} f(x,v,t)dvdx=\frac{d}{dt}\int _{\mathbb{T}}n(x,t)dx=0,\label{mass}\\
&\frac{d}{dt}\int_{\mathbb{T}}(\rho u+nw)(x,t) dx=0,\label{momentum}\\
&\frac{d}{dt}\Big{(}\int_{\mathbb{T}}\big{(}\frac{1}{2}\rho |u|^2+\frac{\rho^{\gamma}}{\gamma-1}\big{)}(x,t)dx+\int_{\mathbb{T}\times\mathbb{R}} \big{(}\frac{1}{2}|v|^2f+f\log{f}\big{)}(x,v,t)dvdx\Big{)}\nonumber\\
&\quad\quad=-\int_{\mathbb{T}}(\mu(\rho)|u_{x}|^2)(x,t)dx-\int_{\mathbb{T}\times\mathbb{R}}\big{(}\rho|(u-v)\sqrt{f}-2(\sqrt{f})_{v}|^2\big{)}(x,v,t)dvdx,
\label{energyCNSVFP}
\end{align}
where $n$ and $nw$ are defined by $(\ref{nw})$.
\end{lemma}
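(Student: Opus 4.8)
The plan is to test each equation of $(\ref{m1})$ against a suitable weight, integrate over the periodic domain, and use periodicity in $x$ (to discard total $x$-derivatives) together with the decay of the regular solution as $|v|\to\infty$ (to discard boundary terms in $v$). First I would integrate $(\ref{m1})_{1}$ over $\mathbb{T}$ to obtain $(\ref{mass})_{1}$ at once, since $\int_{\mathbb{T}}(\rho u)_{x}\,dx=0$. Integrating $(\ref{m1})_{3}$ over $\mathbb{T}\times\mathbb{R}$, the transport term $\int_{\mathbb{T}\times\mathbb{R}}vf_{x}\,dv\,dx$ vanishes by $x$-periodicity and the full $\partial_{v}$-divergence integrates to zero in $v$, so that $\frac{d}{dt}\int_{\mathbb{T}\times\mathbb{R}}f\,dv\,dx=0$; the identification with $\frac{d}{dt}\int_{\mathbb{T}}n\,dx$ is Fubini together with the definition $(\ref{nw})$.

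For the total momentum I would integrate $(\ref{m1})_{2}$ over $\mathbb{T}$ --- the convective, pressure and viscous terms all drop by periodicity --- to get $\frac{d}{dt}\int_{\mathbb{T}}\rho u\,dx=-\int_{\mathbb{T}\times\mathbb{R}}\kappa(\rho)(u-v)f\,dv\,dx$. Then, testing $(\ref{m1})_{3}$ against $v$ and integrating by parts once in $v$ produces $\frac{d}{dt}\int_{\mathbb{T}}nw\,dx=\int_{\mathbb{T}\times\mathbb{R}}\big(\kappa(\rho)(u-v)f-\kappa(\rho)f_{v}\big)\,dv\,dx=\int_{\mathbb{T}\times\mathbb{R}}\kappa(\rho)(u-v)f\,dv\,dx$, the last step using $\int_{\mathbb{R}}f_{v}\,dv=0$. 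Adding the two identities yields $(\ref{momentum})$.

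The energy--entropy identity $(\ref{energyCNSVFP})$ is the substantive part, and I would obtain it by combining three computations. First, multiplying $(\ref{m1})_{2}$ by $u$, using $(\ref{m1})_{1}$ in the form $u\big[(\rho u)_{t}+(\rho u^{2})_{x}\big]=\partial_{t}(\tfrac{1}{2}\rho|u|^{2})+\partial_{x}(\tfrac{1}{2}\rho u^{3})$, introducing the pressure potential $Q(\rho)=\rho^{\gamma}/(\gamma-1)$ (which satisfies $\rho Q'(\rho)-Q(\rho)=P(\rho)$, hence $\partial_{t}Q(\rho)+\partial_{x}(Q(\rho)u)=-P(\rho)u_{x}$ along $(\ref{m1})_{1}$), and integrating over $\mathbb{T}$ with one integration by parts in the viscous term, yields
\[
\frac{d}{dt}\int_{\mathbb{T}}\Big(\tfrac{1}{2}\rho|u|^{2}+\tfrac{\rho^{\gamma}}{\gamma-1}\Big)dx=-\int_{\mathbb{T}}\mu(\rho)|u_{x}|^{2}\,dx-\int_{\mathbb{T}\times\mathbb{R}}\kappa(\rho)(u-v)fu\,dv\,dx.
\]
Second, testing $(\ref{m1})_{3}$ against $\tfrac{1}{2}|v|^{2}$ and integrating by parts in $v$ with $\int_{\mathbb{R}}vf_{v}\,dv=-n$ gives $\frac{d}{dt}\int_{\mathbb{T}\times\mathbb{R}}\tfrac{1}{2}|v|^{2}f\,dv\,dx=\int_{\mathbb{T}\times\mathbb{R}}\kappa(\rho)(u-v)vf\,dv\,dx+\int_{\mathbb{T}}\kappa(\rho)n\,dx$. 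Third, testing $(\ref{m1})_{3}$ against $1+\log f$ (legitimate since $f>0$ for a regular solution) and using $\partial_{t}(f\log f)=(1+\log f)f_{t}$, $(1+\log f)f_{x}=\partial_{x}(f\log f)$, $\partial_{v}(1+\log f)=f_{v}/f$ and $\int_{\mathbb{R}}(u-v)f_{v}\,dv=n$ gives $\frac{d}{dt}\int_{\mathbb{T}\times\mathbb{R}}f\log f\,dv\,dx=\int_{\mathbb{T}}\kappa(\rho)n\,dx-\int_{\mathbb{T}\times\mathbb{R}}\kappa(\rho)|f_{v}|^{2}/f\,dv\,dx$. Adding the three identities, the two mixed drag contributions combine to $-\int_{\mathbb{T}\times\mathbb{R}}\kappa(\rho)(u-v)^{2}f\,dv\,dx$, and a short computation using $\kappa(\rho)=\rho$ and $\int_{\mathbb{R}}(u-v)f_{v}\,dv=n$ --- the key point being the pointwise identity $\rho(u-v)^{2}f-2\rho(u-v)f_{v}+\rho|f_{v}|^{2}/f=\rho\big|(u-v)\sqrt{f}-2(\sqrt{f})_{v}\big|^{2}$ --- shows that the sum equals exactly the right-hand side of $(\ref{energyCNSVFP})$.

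The main obstacle here is bookkeeping rather than hard analysis: one must verify that all boundary terms in $v$ (those involving $vf$, $v^{2}f$, $(u-v)f$, $|v|^{2}f_{v}$ and $f\log f$) vanish as $|v|\to\infty$, which is precisely where the regularity and decay of the regular solution enter, and then check that the drag and Fokker--Planck contributions regroup into the single nonnegative square $\rho|(u-v)\sqrt{f}-2(\sqrt{f})_{v}|^{2}$; the elementary relation $\int_{\mathbb{R}}(u-v)f_{v}\,dv=n$ is the key identity that reconciles the $\int_{\mathbb{T}}\rho n$ terms produced by the particle tests with the cross-term in that square.
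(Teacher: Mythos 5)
Your proposal is correct: the paper states Lemma \ref{lemma21} without proof as a standard consequence of $(\ref{m1})$, and your computation (testing the three equations with $1$, $v$, $u$, $\tfrac12|v|^2$, and $1+\log f$, then regrouping the drag and Fokker--Planck terms via $2(\sqrt f)_v=f_v/\sqrt f$ and $\int_{\mathbb{R}}(u-v)f_v\,dv=n$ into the single square $\rho|(u-v)\sqrt f-2(\sqrt f)_v|^2$) is exactly the canonical argument being invoked. All the identities you use check out, including the cancellation of the mixed drag terms and the two $\int_{\mathbb{T}}\rho n\,dx$ contributions.
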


Then, we have

\begin{lemma}\label{lemma22}
Let $T>0$, and $(\rho,u,f)$ be any regular solution to the IVP $(\ref{m1})$-$(\ref{kappa})$ for $t\in(0,T]$. Then, under the assumptions of Theorem \ref{theorem11}, it holds
\begin{equation}\label{basicCNSVFP}
\left\{
\begin{split}
&~~\rho(x,t)\geq0,\quad  f(x,v,t)\geq 0,\quad (x,v,t)\in \mathbb{T}\times\mathbb{R}\times [0,T],\\
&\sup_{t\in[0,T]}\big{(}\|(\sqrt{\rho}u)(t)\|_{L^2(\mathbb{T})}+\|f(t)\|_{L^1_{2}(\mathbb{T}\times\mathbb{R})}+\|(f\log{f})(t)\|_{L^1(\mathbb{T}\times\mathbb{R})}\big{)}\leq C,\\
&\|\sqrt{\mu(\rho)}u_{x}\|_{L^2(0,T;L^2(\mathbb{T}))}+\|\sqrt{\rho}\big{(}(u-v)\sqrt{f}-2(\sqrt{f})_{v}\big{)}\|_{L^2(0,T;L^2(\mathbb{T}\times\mathbb{R}))}\leq C,\\
&~~\|u(t)\|_{L^{\infty}(\mathbb{T})}\leq \|u_{x}(t)\|_{L^2(\mathbb{T})}+C,\quad t\in[0,T],\\
&~~ \rho(x,t)\leq \rho_{+},\quad (x,t)\in\mathbb{T}\times [0,T],\\
\end{split}
\right.
\end{equation}
where $C>0$ and $\rho_{+}$ are two constants independent of the time $T>0$.
\end{lemma}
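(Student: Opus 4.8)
The plan is to derive each of the stated bounds from the conservation and entropy identities of Lemma \ref{lemma21}, reserving the pointwise upper bound on $\rho$ for last as the genuinely nontrivial step. First I would obtain the nonnegativity of $\rho$ and $f$: for $\rho$ this follows from the transport structure of $(\ref{m1})_1$ along the characteristics of $u$ (or by the standard renormalization argument), while $f\geq 0$ follows from the maximum principle for the Vlasov-Fokker-Planck equation $(\ref{m1})_3$, which is a linear (in $f$) parabolic equation in $v$ with transport in $x$ once $\rho$ and $u$ are regarded as given coefficients. Next, integrating the entropy identity $(\ref{energyCNSVFP})$ in time and discarding the nonnegative dissipation terms on the right gives, using the assumptions $(\ref{a1})$ on the initial data, the uniform bound on $\|\sqrt{\rho}u(t)\|_{L^2}$, on $\||v|^2 f(t)\|_{L^1}$, and on $\|f\log f(t)\|_{L^1}$; here one must handle the sign of $f\log f$ in the usual way by splitting $\{f\le 1\}$ and $\{f>1\}$ and absorbing the negative part $\int_{\{f<1\}} f|\log f|$ against a fraction of $\int |v|^2 f$ plus a constant (this is the classical estimate controlling $f\log f$ from below by $-C(1+|v|^2)f$). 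Then the bound on $\|f(t)\|_{L^1_2}$ follows by combining $\||v|^2 f\|_{L^1}$ with mass conservation $(\ref{mass})$, and time-integrating $(\ref{energyCNSVFP})$ once more yields the $L^2(0,T;L^2)$ bounds on $\sqrt{\mu(\rho)}u_x$ and on $\sqrt{\rho}\big((u-v)\sqrt f - 2(\sqrt f)_v\big)$.

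For the fourth line, $\|u(t)\|_{L^\infty}\le \|u_x(t)\|_{L^2}+C$, I would use that on the torus $\mathbb{T}$ the Sobolev embedding $H^1(\mathbb{T})\hookrightarrow L^\infty(\mathbb{T})$ gives $\|u\|_{L^\infty}\le \|u_x\|_{L^2} + \|u\|_{L^1}$ (or $+|\bar u|$), and then bound the mean value $\int_{\mathbb{T}} u\,dx$: since $\int_{\mathbb{T}}\rho\,u\,dx$ is controlled via momentum conservation $(\ref{momentum})$ together with the uniform $L^1_1$ bound on $f$ (controlling $\int nw$), and since $\int_{\mathbb{T}}\rho\,dx$ is a fixed positive constant by $(\ref{mass})$, one recovers control of a $\rho$-weighted average of $u$; combined with Poincaré-type inequalities and the $\|\sqrt\rho u\|_{L^2}$ bound this gives the stated estimate with a constant independent of $T$. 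The $T$-independence throughout is inherited from the fact that the right-hand side of $(\ref{energyCNSVFP})$ is the negative of a dissipation and the initial entropy is finite.

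The main obstacle is the uniform upper bound $\rho(x,t)\le\rho_+$ with $\rho_+$ independent of $T$. The standard device in the one-dimensional density-dependent-viscosity setting is to introduce the effective viscous flux / Bresch-Desjardins-type quantity: define $\Phi(\rho) = \int_1^\rho \frac{\mu(s)}{s^2}\,ds$ (so that $\partial_x \Phi(\rho)$ relates to $\mu(\rho)u_x/\rho$ modulo the mass equation), and observe from $(\ref{m1})_1$-$(\ref{m1})_2$ that along particle paths the combination $u + (\text{something like }\Phi(\rho)_x$ integrated$)$ evolves by an ODE whose forcing is $-P(\rho)_x$ plus the drag term; integrating the momentum equation in $x$ over a suitable interval and using the continuity equation produces a representation of $\mu(\rho)u_x/\rho$, hence of $\frac{d}{dt}\Phi(\rho)$ along characteristics, in terms of quantities already bounded — the pressure contributes a favorable sign (since $P(\rho)=\rho^\gamma$ is increasing) which prevents $\rho$ from blowing up, while the new drag contribution $-\int_{\mathbb{R}}\kappa(\rho)(u-v)f\,dv = -\rho\int_{\mathbb{R}}(u-v)f\,dv$ must be absorbed. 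The key point specific to this model is that this drag term carries the factor $\rho$ and is controlled by $\|\sqrt\rho u\|_{L^2}$, $\|u_x\|_{L^2}$, and the moments $\|f\|_{L^1_1}$, all of which are already uniformly bounded in $T$; so a Gronwall argument on $\Phi(\rho)$ (equivalently on an increasing function of $\sup_x\rho$) closes with a $T$-independent bound. Executing this flux estimate cleanly — getting the boundary terms on $\mathbb{T}$ right and verifying that every term on the right-hand side is dominated by the $T$-uniform quantities from the earlier parts of the lemma — is where the real work lies.
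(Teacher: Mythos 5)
Your treatment of the first four lines of (\ref{basicCNSVFP}) matches the paper's route: maximum principle arguments for $\rho\geq 0$ and $f\geq0$, integration of the entropy identity (\ref{energyCNSVFP}) with the negative part of $f\log f$ absorbed by $\tfrac12|v|^2f+Ce^{-|v|^2/2}$, mass/momentum conservation for the $L^1_2$ bound, and the $\rho$-weighted mean argument for $\|u\|_{L^{\infty}(\mathbb{T})}\leq\|u_x\|_{L^2(\mathbb{T})}+C$. The genuine gap is in the last line, $\rho\leq\rho_+$, precisely at the point you yourself identify as the real work: absorbing the drag force in the Zlotnik-type argument. In that argument the drag enters through a spatial primitive, essentially $\mathcal{I}\big(\rho(nw-nu)\big)$ with $\mathcal{I}$ as in (\ref{j}), integrated in time along particle paths, and Lemma \ref{lemma61} requires this contribution to be bounded by $N_0+N_1(t_2-t_1)$ with constants not involving the unknown $\sup\rho$. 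Your claim that the drag is "controlled by $\|\sqrt\rho u\|_{L^2}$, $\|u_x\|_{L^2}$, and $\|f\|_{L^1_1}$" fails here: pointwise in $x$ one only gets bounds like $|\mathcal{I}(\rho\, nw)|\leq\|\rho\|_{L^\infty}\|f\|_{L^1_1}$ or $\|\rho\|_{L^2}\|nw\|_{L^2}$, and neither $\|\rho\|_{L^\infty}$ (the quantity being estimated) nor any $L^2$-type moment of $f$ is available at this stage — the bounds on $\|n\|_{L^4}$, $\|nw\|_{L^2}$ in Lemma \ref{lemma23} come later, are $T$-dependent, and themselves use $\rho_+$. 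Feeding $\|\rho\|_{L^\infty}$ back into a Gr\"onwall inequality for $\theta(\rho)$ (or your $\Phi(\rho)$) does not close either: for $\beta=0$ one has $\rho\sim e^{\theta/2}$, so the inequality is superlinear, and the time factors multiplying it ($\|f\|_{L^1_1}$, or $\|u\|_{L^\infty}\leq\|u_x\|_{L^2}+C$, which is only square-integrable in time) are not summable with a $T$-independent total, so the claimed $T$-uniform $\rho_+$ cannot come out this way.

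The missing idea is the paper's modified effective velocity $u+\mathcal{I}(n)$ in (\ref{newm1}): because $(\ref{m1})_3$ gives the local conservation law $n_t+(nw)_x=0$, adding $\rho\,\mathcal{I}(n)$ to the momentum cancels the local drag exactly and leaves only its spatial average $\rho\int_0^1 nw\,dy$, whose primitive is bounded by $\|\rho\|_{L^1}\|f\|_{L^1_2}$, i.e.\ by conserved and entropy-controlled quantities alone; only after this cancellation do the hypotheses of Lemma \ref{lemma61} hold with $T$-independent $N_0,N_1$. Two further points you leave untouched: with $\mu(\rho)=1+\rho^{\beta}$ the averaged viscous term $\int_0^1\mu(\rho)u_y\,dy$ in the representation is also not bounded by the energy alone — the paper handles it via $\mu(\rho)\leq\beta\theta(\rho)+2$ and a Gr\"onwall whose weight $\int_0^1\mu(\rho)|u_y|^2dy$ is time-integrable, which is what preserves $T$-independence — and your choice $\Phi(\rho)=\int_1^{\rho}\mu(s)s^{-2}ds$ forces a division of the momentum balance by $\rho$, which is delicate since $\rho_0$ may vanish under (\ref{a1}); the paper's $\theta(\rho)=\int_1^{\rho}\mu(s)s^{-1}ds$ pairs directly with the effective viscous flux $\mu(\rho)u_x-\rho^{\gamma}$ and avoids this.
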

\begin{proof}
By standard maximum principle for the transport equation $(\ref{m1})_{1}$, we get $\rho\geq 0$. Multiplying $(\ref{m1})_{3}$ by $f_{-}:=-\min{\{f,0\}}\leq 0$ and integrating the resulting equation by parts over $\mathbb{T}\times\mathbb{R}$, one obtains
$$
\frac{1}{2}\frac{d}{dt}\|f_{-}(t)\|_{L^2(\mathbb{T}\times\mathbb{R})}^2+\|(\sqrt{\rho}f_{-})_{v}(t)\|_{L^2(\mathbb{T}\times\mathbb{R})}^2\leq\frac{\|\rho(t)\|_{L^{\infty}(\mathbb{T})}}{2}\|f_{-}(t)\|_{L^2(\mathbb{T}\times\mathbb{R})}^2,
$$
which together with the Gr${\rm{\ddot{o}}}$nwall inequality and $f_{-}|_{t=0}=0$ implies $f\geq 0$. $(\ref{basicCNSVFP})_{2}$-$(\ref{basicCNSVFP})_{4}$ can be derived by (\ref{mass}), (\ref{energyCNSVFP}), $(\ref{basicCNSVFP})_{1}$, and the fact $f|\log f|\leq f\log{f}+\frac{1}{2}|v|^2 f+Ce^{-\frac{|v|^2}{2}}$.

Then, similarly to \cite{lhl4},  we introduce the effective velocity $u+\mathcal{I}(n)$ to re-write the momentum equation $(\ref{m1})_{2}$ as
\begin{equation}\label{newm1}
\begin{split}
&\big{(}\rho (u+\mathcal{I}(n))\big{)}_{t}+\big{(}\rho u(u+\mathcal{I}(n))\big{)}_{x}+(\rho^{\gamma})_{x}=(\mu(\rho)u_{x})_{x}+\rho \int_{0}^{1}nw(y,t)dy,
\end{split}
\end{equation}
  where the operator $\mathcal{I}:L^1(\mathbb{T}) \rightarrow L^{\infty}(\mathbb{T}) $ is defined as
  \begin{equation}\label{j}
\begin{split}
\mathcal{I}(g)(x):=\int^{x}_{0}g(y)dy-\int_{0}^{1}\int_{0}^{y}g(z) dzdy,\quad \forall g\in L^1(\mathbb{T}).
\end{split}
\end{equation}
Applying the operator $\mathcal{I}$ to (\ref{newm1}), one can obtain after a direct computation that 
\begin{equation}
\begin{split}
&\big{[}\mathcal{I}\big{(}\rho u+\rho\mathcal{I}(n)\big{)}\big{]}_{t}+ u\big{[}\mathcal{I}\big{(}\rho u+\rho\mathcal{I}(n)\big{)}\big{]}_{x}+\rho^{\gamma}-\mu(\rho)u_{x}\\
&\quad=\mathcal{I}(\rho)\int_{0}^{1}nw(y,t)dy+ \int_{0}^{1}\big{[}\rho |u|^2+\rho u\mathcal{I}(n)+\rho^{\gamma}-\mu(\rho)u_{y}\big{]}(y,t)dy.\label{newm2}
\end{split}
\end{equation}
 In addition,  it holds by the equation $(\ref{m1})_{1}$ that
\begin{equation}\label{massrhox}
\begin{split}
\frac{d}{dt}\theta(\rho)+u\theta(\rho)_{x}=-\mu(\rho)u_{x},
\end{split}
\end{equation}
where $\theta(\rho)$ is denoted by
\begin{eqnarray}\label{213}
\theta(\rho):=\int_{1}^{\rho}\frac{\mu(s)}{s}ds=
\begin{cases}
2\log{\rho},
& \mbox{if $\beta=0,$ } \\
\log{\rho}+\frac{\rho^{\beta}-1}{\beta},
& \mbox{if $\beta>0.$}
\end{cases}
\end{eqnarray}
Substituting (\ref{massrhox}) into (\ref{newm2}) and re-writing the resulting equation along the particle path $\mathcal{X}^{x,t}(s)$ for any $(x,t)\in \mathbb{T}\times [0,T]$ defined through
\begin{equation}\label{overlinex}
\left\{ \begin{split}
&\frac{d}{ds}\mathcal{X}^{x,t}(s)=u(\mathcal{X}^{x,t}(s),x),\quad  s\in [0,t],\\
&\quad \mathcal{X}^{x,t}(t)=x,
\end{split}
\right.
\end{equation}
we have
 \begin{equation}\label{newm4}
 \begin{split}
&\frac{d}{ds}\theta(\mathcal{X}^{x,t}(s),s)=-\rho^{\gamma}(\mathcal{X}^{x,t}(s),s)+\frac{d}{ds}F(\mathcal{X}^{x,t}(s),s),
\end{split}
\end{equation}
where $F(x,t)$ is given by
\begin{equation}\label{Fxt}
\begin{split}
&F(x,t):=-\mathcal{I}\big{(}\rho u+\rho\mathcal{I}(n)\big{)}(x,t)+\int_{0}^{t}\Big{(}\mathcal{I}(\rho)(x,\tau)\int_{0}^{1}nw(y,\tau)dy\\
&\quad\quad\quad\quad+ \int_{0}^{1}\big{[}\rho |u|^2+\rho u\mathcal{I}(n)+\rho^{\gamma}\big{]}(y,\tau)dy+\int_{0}^{1}(\mu(\rho)u_{y})(y,\tau)dy\Big{)}d\tau.
\end{split}
\end{equation}
To employ Lemma \ref{lemma61} below, we need to estimate $F(\mathcal{X}^{x,t}(s),s)$. It is easy to verify
\begin{equation}\label{FF}
\left\{
\begin{split}
&\big{|}\mathcal{I}\big{(}\rho u+\rho\mathcal{I}(n)\big{)}(x,t)\big{|}\leq \|\rho(t)\|_{L^1(\mathbb{T})}^{\frac{1}{2}}\|(\sqrt{\rho}u)(t)\|_{L^2(\mathbb{T})}+\|\rho(t)\|_{L^1(\mathbb{T})}\|n(t)\|_{L^1(\mathbb{T})},\\
&\big{|}\mathcal{I}(\rho)(x,\tau)\int_{0}^{1}nw(y,t)dy\big{|}\leq\|\rho(t)\|_{L^1(\mathbb{T})} \|f(t)\|_{L_{2}^1(\mathbb{T}\times\mathbb{R})},\\
&\big{|}\int_{0}^{1}\big{[}\rho |u|^2+\rho u\mathcal{I}(n)+\rho^{\gamma}\big{]}(y,t)dy\big{|}\\
&\quad\leq  \|(\sqrt{\rho}u)(t)\|_{L^2(\mathbb{T})}^2+\|n(t)\|_{L^1(\mathbb{T})}\|\rho(t)\|_{L^1(\mathbb{T})}^{\frac{1}{2}}\|(\sqrt{\rho} u)(t)\|_{L^2(\mathbb{T})}+\|\rho^{\gamma}(t)\|_{L^{1}(\mathbb{T})}.
\end{split}
\right.
\end{equation}
Since it holds
\begin{equation}\nonumber
 \left\{
 \begin{split}
 &\mu(\rho)\leq 2,\quad\text{if}~0\leq \rho\leq 1,\\
 &\mu(\rho)\leq \beta\theta(\rho)+2,\quad\text{if}~\rho\geq 1,
 \end{split}
 \right.
 \end{equation}
 we also have
  \begin{align}
 &\Big{|}\int_{0}^{1}(\mu(\rho)u_{y})(y,t)dy\Big{|}\nonumber\\
 &\quad\leq \Big{(}\int_{0}^{1}\mu(\rho)(y,t)dy\Big{)}^{\frac{1}{2}}\Big{(}\int_{0}^{1}(\mu(\rho)|u_{y}|^2)(y,t)dy\Big{)}^{\frac{1}{2}}\nonumber\\
&\quad=1+\frac{1}{4} \Big{(}\int_{\{y\in(0,1)|\rho(y,t)\leq 1\}}+\int_{\{y\in(0,1)|\rho(y,t)\geq 1\}}\Big{)}\mu(\rho)(y,t)dy\int_{0}^{1}(\mu(\rho)|u_{y}|^2)(y,t)dy\nonumber\\
&\quad\leq 1+\int_{0}^{1}(\mu(\rho)|u_{y}|^2)(y,t)dy+\frac{\beta}{4}\sup_{x\in\{y\in(0,1)|\rho(y,t)\geq 1\}}\theta(\rho)(x,t)\int_{0}^{1}(\mu(\rho)|u_{y}|^2)(y,t)dy.\label{ux11}
  \end{align}
By virtue of (\ref{mass}), (\ref{basicCNSVFP}) and (\ref{Fxt})-(\ref{ux11}), we deduce for any $0\leq t_{1}\leq t_{2}\leq T$ that
\begin{equation}\label{Flip}
\begin{split}
|F(\mathcal{X}^{x,t}(t_{2}),t_{2})-F(\mathcal{X}^{x,t}(t_{1}),t_{1})|\leq C_{2}+\frac{\beta C_{1}^2}{4} \int_{0}^{T}\sup_{x\in\{y\in(0,1)|\rho(y,t)\geq 1\}}\theta(\rho)(x,t)dt+C_{3}(t_{2}-t_{1}),
\end{split}
\end{equation}
where $C_{i}>0$, i=2,3, are two constants given by
\begin{equation}\nonumber
\left\{
\begin{split}
&C_{2}:=2\|\rho_{0}\|_{L^1(\mathbb{T})}^{\frac{1}{2}}C_{1}+2\|\rho_{0}\|_{L^1(\mathbb{T})}\|n_{0}\|_{L^1(\mathbb{T})}+C_{1},\\
&C_{3}:=\|\rho_{0}\|_{L^1(\mathbb{T})}C_{1}+C_{1}^2+\|n_{0}\|_{L^1(\mathbb{T})}\|\rho_{0}\|_{L^1(\mathbb{T})}^{\frac{1}{2}}C_{1}+C_{1}+1.
\end{split}
\right.
\end{equation}
Thus, we apply  Lemma \ref{lemma61} for $N_{0}=C_{2}+\frac{\beta C_{1}^2}{4} \int_{0}^{T}\sup_{x\in\{y\in(0,1)|\rho(y,t)\geq 1\}}\theta(\rho)(x,t)dt, N_{1}=C_{3}$ and $\xi_{*}=\theta(C_{3}^{\frac{1}{\gamma}})$ to have
\begin{equation}\nonumber
\begin{split}
&\sup_{x\in\{y\in(0,1)|\rho(y,t)\geq 1\}}\theta(\rho)(x,t)\leq \max\Big{\{}\sup_{x\in\mathbb{T}}\theta(\rho_{0}), \theta(C_{3}^{\frac{1}{\gamma}})\Big{\}}+C_{2}+\frac{\beta C_{1}^2}{4} \int_{0}^{T}\sup_{x\in\{y\in(0,1)|\rho(y,t)\geq 1\}}\theta(\rho)(x,t)dt,
\end{split}
\end{equation}
which together with the Gr${\rm{\ddot{o}}}$nwall inequality and (\ref{213}) yields $(\ref{basicCNSVFP})_{5}$. The proof of Lemma \ref{lemma22} is completed.
\end{proof}

Next, we have the time-dependent estimates of the distribution function $f$:

\begin{lemma}\label{lemma23}
Let $T>0$, and $(\rho,u,f)$ be any regular solution to the IVP $(\ref{m1})$-$(\ref{kappa})$ for $t\in(0,T]$. Then, under the assumptions of Theorem \ref{theorem11}, it holds
\begin{equation}
\left\{
\begin{split}
&~~f(x,v,t)\leq e^{\rho_{+}T}\|f_{0}\|_{L^{\infty}(\mathbb{T}\times\mathbb{R})},\quad (x,v,t)\in\mathbb{T}\times\mathbb{R}\times[0,T],\\
&\sup_{t\in[0,T]}\|f(t)\|_{L^1_{3}(\mathbb{T}\times\mathbb{R})}+\|\sqrt{\rho}f_{v}\|_{L^2(0,T;L^2(\mathbb{T}\times\mathbb{R}))}\leq C_{T},\\
&\sup_{t\in[0,T]}\big{(}\|n(t)\|_{L^{4}(\mathbb{T})}+\|nw(t)\|_{L^2(\mathbb{T})}\big{)}\leq C_{T},\label{N1time}
\end{split}
\right.
\end{equation}
where the constant $\rho_{+}$ is given by $(\ref{basicCNSVFP})_{5}$, and $C_{T}>0$ is a constant dependent of the time $T>0$.
\end{lemma}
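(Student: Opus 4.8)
The plan is to establish the three estimates in turn, relying on the uniform bound $\rho\le\rho_{+}$ from $(\ref{basicCNSVFP})_{5}$ and on the energy estimates of Lemma \ref{lemma22}. Since $\rho,u$ do not depend on $v$ and $\kappa(\rho)=\rho$, I would first rewrite $(\ref{m1})_{3}$ as $f_{t}+vf_{x}+\rho(u-v)f_{v}-\rho f-\rho f_{vv}=0$. For the $L^{\infty}$-bound, set $\bar f(t):=e^{\rho_{+}t}\|f_{0}\|_{L^{\infty}(\mathbb{T}\times\mathbb{R})}$: because $\rho\le\rho_{+}$, this $(x,v)$-independent function is a supersolution and $\bar f(0)\ge f_{0}$. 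Testing the equation for $\phi:=f-\bar f$ against $\phi_{+}:=\max\{\phi,0\}$ over $\mathbb{T}\times\mathbb{R}$, the $x$-transport term drops by periodicity, the Fokker--Planck term integrates by parts to $\|\sqrt{\rho}(\phi_{+})_{v}\|_{L^{2}}^{2}\ge0$, and the $\rho(u-v)f_{v}$ and $-\rho f$ terms combine into a contribution bounded by $\tfrac{\rho_{+}}{2}\|\phi_{+}\|_{L^{2}}^{2}$ --- the same structure as the $f_{-}$-argument in Lemma \ref{lemma22}. Gr\"onwall's inequality and $\phi_{+}|_{t=0}=0$ then give $\phi_{+}\equiv0$, which is $(\ref{N1time})_{1}$.

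For the moment bound, set $m_{k}(t):=\int_{\mathbb{T}\times\mathbb{R}}\langle v\rangle^{k}f\,dv\,dx=\|f(t)\|_{L^{1}_{k}(\mathbb{T}\times\mathbb{R})}$ and test $(\ref{m1})_{3}$ against $\langle v\rangle^{3}$. The $x$-transport term vanishes by periodicity; the Fokker--Planck term, after two integrations by parts, becomes $3\int\rho\,\partial_{v}(v\langle v\rangle)f\le Cm_{1}$ since $\partial_{v}(v\langle v\rangle)\le2\langle v\rangle$; and the drag term is $3\int\rho\,v\langle v\rangle(u-v)f$, whose $(-v)$-part is $\le -3\int\rho\langle v\rangle^{3}f+Cm_{1}\le Cm_{1}$ and whose $u$-part is $\le3\rho_{+}\|u(t)\|_{L^{\infty}(\mathbb{T})}m_{2}$ (using $|v|\langle v\rangle\le\langle v\rangle^{2}$). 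Since $m_{1}\le m_{2}\le C$ uniformly by $(\ref{basicCNSVFP})_{2}$, and since $\mu(\rho)\ge\mu_{0}=1$ makes $\|u_{x}\|_{L^{2}(0,T;L^{2})}\le C$ by $(\ref{basicCNSVFP})_{3}$, so that $\int_{0}^{T}\|u(t)\|_{L^{\infty}(\mathbb{T})}\,dt\le C_{T}$ by $(\ref{basicCNSVFP})_{4}$, I obtain $\tfrac{d}{dt}m_{3}\le C(1+\|u(t)\|_{L^{\infty}(\mathbb{T})})$; integrating in time and using $m_{3}(0)=\|f_{0}\|_{L^{1}_{3}(\mathbb{T}\times\mathbb{R})}<\infty$ yields $\sup_{[0,T]}\|f(t)\|_{L^{1}_{3}(\mathbb{T}\times\mathbb{R})}\le C_{T}$.

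For the last two bounds, test $(\ref{m1})_{3}$ against $f$: the transport term vanishes, the drag term produces $-\tfrac12\int\rho f^{2}$, and the Fokker--Planck term produces $\|\sqrt{\rho}f_{v}\|_{L^{2}}^{2}$, giving $\tfrac12\tfrac{d}{dt}\|f\|_{L^{2}}^{2}+\|\sqrt{\rho}f_{v}\|_{L^{2}}^{2}\le\tfrac{\rho_{+}}{2}\|f\|_{L^{2}}^{2}$. By the $L^{\infty}$-bound and mass conservation $(\ref{mass})$, $\|f(t)\|_{L^{2}}^{2}\le\|f(t)\|_{L^{\infty}}\|f(t)\|_{L^{1}}\le e^{\rho_{+}T}\|f_{0}\|_{L^{\infty}}\overline{n_{0}}=C_{T}$, so integrating over $[0,T]$ (with $f_{0}\in L^{1}\cap L^{\infty}\subset L^{2}$) gives $\|\sqrt{\rho}f_{v}\|_{L^{2}(0,T;L^{2})}\le C_{T}$, which together with the moment bound is $(\ref{N1time})_{2}$. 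Finally, splitting $\int_{\mathbb{R}}(\cdot)\,dv$ over $\{|v|\le R\}$ and $\{|v|>R\}$ and optimizing in $R$ gives $n(x,t)\le C\|f(x,\cdot,t)\|_{L^{\infty}_{v}}^{3/4}\big(\int_{\mathbb{R}}|v|^{3}f\,dv\big)^{1/4}$ and $|nw(x,t)|\le C\|f(x,\cdot,t)\|_{L^{\infty}_{v}}^{1/2}\big(\int_{\mathbb{R}}|v|^{3}f\,dv\big)^{1/2}$; raising these to the fourth and second power, integrating in $x$, and using the $L^{\infty}$- and moment bounds yields $(\ref{N1time})_{3}$.

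The hard part will be the third-moment propagation. Unlike $\|f\|_{L^{1}_{2}}$, which is controlled uniformly in $T$ by the entropy dissipation, the weight $\langle v\rangle^{3}$ cannot be absorbed by the Fokker--Planck dissipation (whose coefficient $\rho$ may vanish), so the drag coupling $\int\rho u\,v\langle v\rangle f$ genuinely feeds $m_{3}$ its growth. The estimate still closes --- at the cost of a $T$-dependent constant --- because the factor $\|u(t)\|_{L^{\infty}(\mathbb{T})}$ is integrable in time with $O(T^{1/2})$ growth, and this uses crucially the density-independent part $\mu_{0}>0$ of the viscosity in $(\ref{P})$, which upgrades $\|\sqrt{\mu(\rho)}u_{x}\|_{L^{2}(0,T;L^{2})}\le C$ to $\|u_{x}\|_{L^{2}(0,T;L^{2})}\le C$.
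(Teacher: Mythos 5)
Your proposal is correct, and apart from one sub-step it follows the same route as the paper: multiplying $(\ref{m1})_{3}$ by $\langle v\rangle^{3}$ and using $\int_{0}^{T}\|u(t)\|_{L^{\infty}(\mathbb{T})}dt\leq C_{T}$ (via $(\ref{basicCNSVFP})_{3}$-$(\ref{basicCNSVFP})_{4}$ and $\mu\geq\mu_{0}$) for the third moment, the $L^{2}$ energy identity for $\|\sqrt{\rho}f_{v}\|_{L^2(0,T;L^2)}$, and the cut-off $\{|v|\leq R\}$/$\{|v|>R\}$ splitting optimized in $R$ for $n$ and $nw$ are exactly the paper's computations (the paper runs Gr\"onwall on $\|f\|_{L^1_3}$ directly, while you exploit the uniform bound on $\|f\|_{L^1_2}$ to get linear-in-time growth; both close). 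The only genuine difference is the $L^{\infty}$ bound $(\ref{N1time})_{1}$: the paper multiplies by $pf^{p-1}$, obtains $\sup_{t}\|f(t)\|_{L^{p}}\leq e^{\rho_{+}T}\|f_{0}\|_{L^{\infty}}^{1-1/p}\|f_{0}\|_{L^{1}}^{1/p}$ and lets $p\to\infty$, which has the side benefit of producing the $\sqrt{\rho}\,f^{(p-2)/2}f_{v}$ dissipation (in particular the $p=2$ case gives $(\ref{N1time})_{2}$ from the same computation), whereas you use a comparison argument with the spatially homogeneous supersolution $e^{\rho_{+}t}\|f_{0}\|_{L^{\infty}}$ and test the difference against its positive part, mirroring the paper's $f_{-}$ argument in Lemma \ref{lemma22}; this is equally valid at the stated level of regularity, gives the $L^{\infty}$ bound directly without a limit in $p$, but then requires the separate $L^{2}$ test you perform to recover the $f_{v}$ dissipation.
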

\begin{proof}
First, we multiply $(\ref{m1})_{3}$ by $pf^{p-1}$ for any $p\in [2,\infty)$ and integrate the resulting equation by parts over $\mathbb{T}\times\mathbb{R}$ to obtain
\begin{equation}\nonumber
\begin{split}
&\frac{d}{dt}\|f(t)\|_{L^{p}(\mathbb{T}\times\mathbb{R})}^{p}+p(p-1)\| (\sqrt{\rho} f^{\frac{p-2}{2}}  f_{v})(t)\|_{L^2(\mathbb{T}\times\mathbb{R})}^2\leq (p-1)\|\rho(t)\|_{L^{\infty}(\mathbb{T})}\|f(t)\|_{L^{p}(\mathbb{T}\times\mathbb{R})}^{p},
\end{split}
\end{equation}
which together with $(\ref{basicCNSVFP})_{5}$ and the Gr${\rm{\ddot{o}}}$nwall inequality implies
\begin{equation}\label{flp}
\left\{
\begin{split}
&\|\sqrt{\rho}f_{v}\|_{L^2(0,T;L^2(\mathbb{T}\times\mathbb{R}))}\leq C_{T},\\
& \sup_{t\in [0,T]}\|f(t)\|_{L^{p}(\mathbb{T}\times\mathbb{R})}\leq e^{\rho_{+}T}\|f_{0}\|_{L^{\infty}(\mathbb{T}\times\mathbb{R})}^{1-\frac{1}{p}}\|f_{0}\|_{L^1(\mathbb{T}\times\mathbb{R})}^{\frac{1}{p}}.
\end{split}
\right.
\end{equation}
We get $(\ref{N1time})_{1}$ after taking the limit in $(\ref{flp})_{2}$ as $p\rightarrow\infty$. Then multiplying the equation $(\ref{m1})_{3}$ by $\langle v\rangle^{3}$ with $\langle v\rangle:=(1+|v|^2)^{\frac{1}{2}}$, integrating the resulting equation by parts over $\mathbb{T}\times\mathbb{R}$, and applying the Gr${\rm{\ddot{o}}}$nwall inequality, we derive
\begin{equation}\label{N11}
\begin{split}
&\sup_{t\in[0,T]}\|f(t)\|_{L^1_{3}(\mathbb{T}\times\mathbb{R})}\leq e^{C\rho_{+}(T+\|u\|_{L^1(0,T;L^{\infty}(\mathbb{T}))})}\|f_{0}\|_{L^1_{3}(\mathbb{T}\times\mathbb{R})}\leq C_{T},
\end{split}
\end{equation}
where one has used the estimates $(\ref{basicCNSVFP})$.

We are going to estimate $n$ and $nw$. For any $R>0$, it is easy to verify
\begin{align}
&nw(x,t)\leq \Big{(}\int_{\{|v|\leq R\}}+\int_{\{|v|\geq R\}}\Big{)}vf(x,v,t)dv\leq 2\|f(x,\cdot,t)\|_{L^{\infty}(\mathbb{R})}R^2+\frac{\int_{\mathbb{R}}|v|^3 f(x,v,t)dv}{R^2}.\label{3r}
\end{align}
Choosing $R= (\int_{\mathbb{R}} |v|^3f(x,v,t) dv)^{\frac{1}{4}}$ in (\ref{3r}) and making use of $(\ref{N1time})_{1}$ and $(\ref{N11})$, we obtain
\begin{equation}\label{N14}
\begin{split}
&\sup_{t\in[0,T]}\|nw(t)\|_{L^2(\mathbb{T})}\leq \sup_{t\in[0,T]}\big{(}2\|f(t)\|_{L^{\infty}(\mathbb{T}\times\mathbb{R})}+1\big{)}^{\frac{1}{2}}\|f(t)\|_{L^1_{3}(\mathbb{T}\times\mathbb{R})}^{\frac{1}{2}}\leq C_{T}.
\end{split}
\end{equation}
Similarly, one has
\begin{equation}\label{N13}
\begin{split}
&\sup_{t\in[0,T]}\|n(t)\|_{L^4(\mathbb{T})}\leq \sup_{t\in[0,T]}\big{(}2\|f(t)\|_{L^{\infty}(\mathbb{T}\times\mathbb{R})}+1\big{)}^{\frac{1}{4}}\|f(t)\|_{L^1_{3}(\mathbb{T}\times\mathbb{R})}^{\frac{1}{4}}\leq C_{T}.
\end{split}
\end{equation}
The combination of $(\ref{flp})_{1}$, (\ref{N11}), and (\ref{N14})-(\ref{N13}) gives rise to $(\ref{N1time})_{2}$-$(\ref{N1time})_{3}$. The proof of Lemma \ref{lemma23} is completed.
\end{proof}

The estimates (\ref{basicCNSVFP}) and (\ref{N1time}) are not enough to obtain the compactness of the density $\rho$ in the framework of Lions \cite{lions2}. To overcome the difficulty caused by the density-dependent viscosity coefficient $\mu(\rho)$, we need to establish the additional $L^1(0,T;L^{\infty})$-estimate of the effective viscous flux $\rho^{\gamma}-\mu(\rho)u_{x}$:
\begin{lemma}\label{lemma24}
Let $T>0$, and $(\rho,u,f)$ be any regular solution to the IVP $(\ref{m1})$-$(\ref{kappa})$ for $t\in(0,T]$. Then, under the assumptions of Theorem \ref{theorem11}, it holds
\begin{align}
\sup_{t\in[0,T]}t^{\frac{1}{2}}\|u(t)\|_{H^1(\mathbb{T})}+\|\rho^{\gamma}-\mu(\rho)u_{x}\|_{L^{p}(0,T;L^{\infty}(\mathbb{T}))}\leq C_{T},\quad p\in[1,\frac{4}{3}),\label{N1timew}
\end{align}
for $C_{T}>0$ a constant.
\end{lemma}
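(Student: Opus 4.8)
The plan is to derive an equation for $u_t$ (or for the "velocity potential") and test it against a well-chosen multiplier so as to obtain control of $\|\sqrt{\rho}\, u_t\|_{L^2}$ and $\|u_x\|_{L^2}$ weighted by a power of $t$, then pass from this to the $L^\infty$-bound on the effective viscous flux $G:=\rho^\gamma-\mu(\rho)u_x$ via a Sobolev embedding in one dimension together with the elliptic structure $(\mu(\rho)u_x)_x = (\rho u)_t + (\rho u^2)_x + (\rho^\gamma)_x + \int_{\mathbb R}\rho(u-v)f\,dv$. Concretely, I would first multiply $(\ref{m1})_2$ by $u_t$ and integrate over $\mathbb T$; the viscous term gives $\frac{d}{dt}\int \frac12 \mu(\rho)|u_x|^2$ plus a commutator term involving $\mu'(\rho)\rho_t |u_x|^2 = -\mu'(\rho)(\rho u)_x|u_x|^2$, the pressure term is handled by writing $(\rho^\gamma)_x u_t$ and integrating by parts in $t$ after using $(\ref{m1})_1$, and the drag term $-\int \kappa(\rho)(u-v)f\,dv\cdot u_t$ is absorbed using the bounds $\|n\|_{L^4}, \|nw\|_{L^2}\le C_T$ from Lemma \ref{lemma23} together with $\rho\le\rho_+$. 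This yields, after multiplying by the weight $t$ and using that $\|\sqrt{\mu(\rho)}u_x\|_{L^2(0,T;L^2)}\le C$ is already integrable in time (so the weight $t$ kills the singularity at $t=0$), an estimate of the form $\sup_{t\le T} t\,\|u_x(t)\|_{L^2}^2 + \int_0^T t\,\|\sqrt{\rho}u_t\|_{L^2}^2\,d\tau \le C_T$, which gives the first term in $(\ref{N1timew})$.

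For the flux bound, I would use that in one space dimension $\|G(t)\|_{L^\infty(\mathbb T)} \le C\|G(t)\|_{L^2(\mathbb T)}^{1/2}\|G_x(t)\|_{L^2(\mathbb T)}^{1/2} + C\|G(t)\|_{L^1(\mathbb T)}$, and that $G_x = (\rho u)_t + (\rho u^2)_x + \int \rho(u-v)f\,dv$ by the momentum equation. The term $(\rho u^2)_x$ and the drag term are controlled in $L^2$ by the already-established bounds (using $\|u\|_{L^\infty}\le \|u_x\|_{L^2}+C$ from $(\ref{basicCNSVFP})_4$ and the moment bounds), while $(\rho u)_t = \rho u_t + \rho_t u = \rho u_t - (\rho u)_x u$ is controlled in $L^2$ by $\sqrt{\rho_+}\,\|\sqrt{\rho}u_t\|_{L^2}$ plus lower-order terms. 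Since $\sup_{t\le T} t^{1/2}\|u_x(t)\|_{L^2}\le C_T$ gives $\|G(t)\|_{L^2}\lesssim t^{-1/2}$-type control and $\|G_x(t)\|_{L^2}\lesssim \|\sqrt{\rho}u_t(t)\|_{L^2} + (\text{bounded})$ with $\int_0^T t\|\sqrt{\rho}u_t\|_{L^2}^2\,d\tau\le C_T$, interpolation gives $\|G(t)\|_{L^\infty}\lesssim t^{-1/4}\big(\|\sqrt{\rho}u_t(t)\|_{L^2}+1\big)^{1/2} + C_T$; raising to the power $p$ and integrating, the factor $t^{-p/4}$ is integrable near $0$ precisely when $p<4$, and the factor $(\|\sqrt{\rho}u_t\|_{L^2}^2+1)^{p/4}$ is integrable when $p/2\le 1$ after absorbing the weight, i.e. the constraint $p<\frac43$ emerges from combining these two integrability thresholds with Hölder.

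The main obstacle I anticipate is the commutator term $\int \mu'(\rho)(\rho u)_x |u_x|^2\,dx$ arising in the $u_t$-estimate, since when $\beta>0$ the coefficient $\mu'(\rho)=\beta\rho^{\beta-1}$ and one gets a genuinely cubic term in $u_x$ that is not obviously controlled by the viscous dissipation alone; handling it will require using the $L^\infty$-bound on $\rho$ together with the one-dimensional interpolation $\|u_x\|_{L^3}^3 \le C\|u_x\|_{L^2}^{5/2}\|u_{xx}\|_{L^2}^{1/2}$ (or rather, substituting $u_{xx}$ via the momentum equation back in terms of $u_t$ and $G$), so that the cubic term is absorbed into the left-hand side at the cost of the weight $t$ and constants $C_T$. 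A secondary technical point is ensuring that all constants through $t=0$ are compatible with the weight $t^{1/2}$ in the claimed estimate rather than $t$; this is resolved by noting that $\|\sqrt{\mu(\rho)}u_x\|_{L^2(0,T;L^2)}^2\le C$ is \emph{finite} (not just locally), so a single power of $t$ as integrating factor suffices and then $t^{1/2}\|u(t)\|_{H^1}\le C_T$ follows. I would carry out the steps in the order: (i) $u_t$-energy estimate with weight, (ii) deduce $\sup_t t^{1/2}\|u\|_{H^1}$ and $\int_0^T t\|\sqrt\rho u_t\|_{L^2}^2$, (iii) express $G_x$ from the momentum equation and bound it in $L^2$, (iv) interpolate to get the $L^\infty$-in-$x$, $L^p$-in-$t$ bound on $G$ with the sharp exponent $p<\frac43$.
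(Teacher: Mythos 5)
Your outline follows the same route as the paper: test $(\ref{m1})_{2}$ with $u_{t}$, weight by $t$, close with Gr\"onwall to get $\sup_t t\|u_x\|_{L^2}^2+\int_0^T t\|\sqrt{\rho}u_t\|_{L^2}^2\,dt\leq C_T$, then bound $\|\rho^\gamma-\mu(\rho)u_x\|_{L^\infty(\mathbb{T})}$ by Gagliardo--Nirenberg with $(\mu(\rho)u_x-\rho^\gamma)_x=\rho u_t+\rho uu_x-\rho nw+\rho nu$ from the momentum equation, and finally a weighted H\"older in time producing exactly the threshold $p<\frac43$ (the paper's exponent $\int_0^T t^{-\frac{2p}{4-p}}dt$). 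The drag term, the role of the finite dissipation $\|\sqrt{\mu(\rho)}u_x\|_{L^2(0,T;L^2)}$ at $t=0$, and the flux interpolation are all in line with the paper's proof.

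However, there is a genuine gap in your treatment of the commutator terms, and it is precisely the point where the density-dependent viscosity makes this lemma nontrivial. Under the assumptions of Theorem \ref{theorem11} the density has \emph{no} spatial regularity ($\rho_0\in L^\infty$ only; the $H^1$ bound on $\rho$ appears first in Lemma \ref{lemma25} under Theorem \ref{theorem12}), so no estimate may contain an unpaired $\rho_x$. Both the pressure commutator $\int(\rho^\gamma)_t u_x\,dx$ and the viscosity commutator $\frac12\int(\mu(\rho))_t|u_x|^2dx$ produce, via $(\ref{m1})_1$, terms of the form $\int u\,h(\rho)\rho_x\,u_x\,dx$ and $\int u\,\beta\rho^{\beta-1}\rho_x|u_x|^2dx$; your proposal only addresses the cubic piece $\int\rho^\beta|u_x|^2u_x\,dx$, and the fix you suggest --- $\|u_x\|_{L^3}^3\lesssim\|u_x\|_{L^2}^{5/2}\|u_{xx}\|_{L^2}^{1/2}$ with $u_{xx}$ substituted from the momentum equation --- does not close, because for $\beta>0$ one has $\mu(\rho)u_{xx}=(\mu(\rho)u_x)_x-\mu'(\rho)\rho_x u_x$, so recovering $u_{xx}$ from the equation reintroduces $\rho_x$. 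The paper's device, which is the missing idea, is to write every such product as an exact derivative of a bounded function of $\rho$, e.g. $\rho^{\gamma-1}\mu(\rho)^{-1}\rho_x=\big(\int_1^\rho \frac{s^{\gamma-1}}{\mu(s)}ds\big)_x$ and $\rho^{\beta-1}\rho_x|u_x|^2=\big(\int_1^\rho\frac{s^{\beta-1}}{\mu^2(s)}ds\big)_x(\mu(\rho)u_x)^2$ (expanded through $(\mu(\rho)u_x-\rho^\gamma)^2$), integrate by parts in $x$, and replace $(\mu(\rho)u_x-\rho^\gamma)_x$ by $\rho u_t+\rho uu_x-\rho nw+\rho nu$; the cubic term is then handled by the pointwise bound (\ref{uinftyx}) on $\|u_x\|_{L^\infty}$, itself obtained from the effective flux rather than from $u_{xx}$. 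Without this step the weighted $u_t$-estimate, and hence the whole lemma, does not go through for $\beta>0$.
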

\begin{proof}
Multiplying $(\ref{m1})_{2}$ by $u_{t}$ and integrating the resulting equation by parts over $\mathbb{T}$, we have
\begin{align}
&\frac{1}{2}\frac{d}{dt}\|(\sqrt{\mu(\rho)}u_{x})(t)\|_{L^2(\mathbb{T})}^2+\|(\sqrt{\rho}u_{t})(t)\|_{L^2(\mathbb{T})}^2=\sum_{i=1}^3 I_{i}^1,\label{tux}
\end{align}
where $I_{1}^{1}, i=1,2,3,$ are given by
\begin{equation}\nonumber
\begin{split}
&I_{1}^{1}:=\int_{\mathbb{T}}[(\rho nw-\rho un-\rho u u_{x})u_{t}](x,t)dx,\\\
&I_{2}^{1}:=\int_{\mathbb{T}}(\rho^{\gamma} u_{xt})(x,t)dx,\\
&I_{3}^{1}:=\frac{1}{2}\int_{\mathbb{T}}[(\mu(\rho))_{t}|u_{x}|^2](x,t)dx.
\end{split}
\end{equation}
We deal with the terms $I_{i}^1$ $(i=1,2,3)$ as follows. One deduces by $(\ref{basicCNSVFP})_{5}$ and $(\ref{N1time})_{3}$ that
\begin{equation}\nonumber
\begin{split}
&I_{1}^{1}\leq \rho_{+}^{\frac{1}{2}}\|(\sqrt{\rho}u_{t})(t)\|_{L^2(\mathbb{T})}\big{(}\|nw(t)\|_{L^2(\mathbb{T})}+\|u(t)\|_{L^{\infty}(\mathbb{T})}\|n(t)\|_{L^2(\mathbb{T})}+\|u_{x}(t)\|_{L^2(\mathbb{T})}\|u(t)\|_{L^{\infty}(\mathbb{T})}\big{)}\\
&\quad\leq\frac{1}{8}\|(\sqrt{\rho}u_{t})(t)\|_{L^2(\mathbb{T})}^2+C_{T}\|u(t)\|_{L^{\infty}(\mathbb{T})}^2\big{(}1+\|u_{x}(t)\|_{L^2(\mathbb{T})}^2\big{)}+C_{T}.
\end{split}
\end{equation}
Since it holds
\begin{equation}\nonumber
\begin{split}
&-\int_{\mathbb{T}}[(\rho^{\gamma})_{t}u_{x}](x,t)dx\nonumber\\
&\quad=\gamma\int_{\mathbb{T}}\big{[}(\rho^{\gamma}u_{x}+u\rho^{\gamma-1}\rho_{x})u_{x}\big{]}(x,t)dx\nonumber\\
&\quad=\gamma\int_{\mathbb{T}}(\rho^{\gamma}|u_{x}|^2)(x,t)dx+\gamma\int_{\mathbb{T}} \Big{(}u\frac{\rho^{\gamma-1}}{\mu(\rho)}\rho_{x}(\mu(\rho)u_{x}-\rho^{\gamma})\Big{)}(x,t)dx+\gamma\int_{\mathbb{T}}\Big{(}u \frac{\rho^{2\gamma-1}}{\mu(\rho)}\rho_{x}\Big{)}(x,t)dx\nonumber\\
&\quad=\gamma\int_{\mathbb{T}}(\rho^{\gamma}|u_{x}|^2)(x,t)dx-\gamma\int_{\mathbb{T}} \Big{(}\int_{1}^{\rho}\frac{s^{\gamma-1}}{\mu(s)}ds[u(\mu(\rho)u_{x}-\rho^{\gamma})]_{x}\Big{)}(x,t)dx-\gamma\int_{\mathbb{T}}\Big{(}u_{x}\int_{1}^{\rho}\frac{s^{2\gamma-1}}{\mu(s)}ds\Big{)}(x,t)dx\nonumber\\
&\quad=\gamma\int_{\mathbb{T}}\Big{[}\Big{(}\rho^{\gamma}-\mu(\rho)\int_{1}^{\rho}\frac{s^{\gamma-1}}{\mu(s)}ds\Big{)}|u_{x}|^2\Big{]}(x,t)dx\\
&\quad\quad-\gamma\int_{\mathbb{T}} \Big{(}u\int_{1}^{\rho}\frac{s^{\gamma-1}}{\mu(s)}ds(\rho u_{t}+\rho u u_{x}-\rho nw+\rho nu)\Big{)}(x,t)dx\\
&\quad\quad-\gamma\int_{\mathbb{T}}\Big{[}u_{x}\Big{(}\int_{1}^{\rho}\frac{s^{2\gamma-1}}{\mu(s)}ds-\rho^{\gamma}\int_{1}^{\rho}\frac{s^{\gamma-1}}{\mu(s)}ds\Big{)}\Big{]}(x,t)dx,
\end{split}
\end{equation}
derived from the equations $(\ref{m1})_{1}$-$(\ref{m1})_{2}$, we obtain by $(\ref{basicCNSVFP})$ and $(\ref{N1time}) _{2}$ that
\begin{equation}\nonumber
\begin{split}
&I_{2}^{1}=\frac{d}{dt}\int_{\mathbb{T}}(\rho^{\gamma}u_{x})(x,t)dx-\int_{\mathbb{T}}[(\rho^{\gamma})_{t}u_{x}](x,t)dx\\
&\quad\leq \frac{d}{dt}\int_{\mathbb{T}}(\rho^{\gamma}u_{x})(x,t)dx+\frac{1}{8}\|(\sqrt{\rho}u_{t})(t)\|_{L^2(\mathbb{T})}^2+C\|u_{x}(t)\|_{L^2(\mathbb{T})}^2+C_{T}\|u(t)\|_{L^{\infty}(\mathbb{T})}^2+C_{T}.
\end{split}
\end{equation}
It follows from the Gagliardo-Nirenberg inequality, $(\ref{m1})_{2}$, $(\ref{basicCNSVFP})_{5}$, and $(\ref{N1time})_{3}$ that
\begin{align}
&\|u_{x}(t)\|_{L^{\infty}(\mathbb{T})}\nonumber\\
&\quad\leq \|(\mu(\rho)u_{x}-\rho^{\gamma})(t)\|_{L^{\infty}(\mathbb{T})}+\|\rho(t)\|_{L^{\infty}(\mathbb{T})}^{\gamma} \nonumber\\
&\quad\leq C\|(\mu(\rho)u_{x}-\rho^{\gamma})(t)\|_{L^{2}(\mathbb{T})}^{\frac{1}{2}}\|(\mu(\rho)u_{x}-\rho^{\gamma})_{x}(t)\|_{L^{2}(\mathbb{T})}^{\frac{1}{2}}+\|(\mu(\rho)u_{x}-\rho^{\gamma})(t)\|_{L^{1}(\mathbb{T})}+\rho_{+}^{\gamma}\nonumber\\
&\quad\leq C\|(\rho u_{t}+\rho u u_{x}+\rho un-\rho wn)(t)\|_{L^2(\mathbb{T})}^{\frac{1}{2}}\|(\mu(\rho)u_{x}-\rho^{\gamma})(t)\|_{L^2(\mathbb{T})}^{\frac{1}{2}}+C\|u_{x}(t)\|_{L^2(\mathbb{T})}+C\nonumber \\
&\quad\leq C_{T}\|(\sqrt{\rho}u_{t})(t)\|_{L^2(\mathbb{T})}+C_{T}\|u(t)\|_{L^{\infty}(\mathbb{T})}(\|u_{x}(t)\|_{L^2(\mathbb{T})}+1)+C_{T}.\label{uinftyx}
\end{align}
Similarly to the estimate of $I_{2}^{1}$, one deduces by $(\ref{m1})_{1}$-$(\ref{m1})_{2}$, (\ref{basicCNSVFP})-(\ref{N1time}), and (\ref{uinftyx}) that
\begin{equation}\nonumber
\begin{split}
&I_{3}^{1}=-\frac{\beta}{2}\int_{\mathbb{T}}(\rho^{\beta}|u_{x}|^2u_{x})(x,t)dx-\frac{\beta}{2}\int_{\mathbb{T}} \big{(}u\rho^{\beta-1}\rho_{x} |u_{x}|^2\big{)}(x,t)dx\\
&\quad=-\frac{\beta}{2}\int_{\mathbb{T}}(\rho^{\beta}|u_{x}|^2u_{x})(x,t)dx-\frac{\beta}{2}\int_{\mathbb{T}} \Big{[}u\Big{(}\int_{1}^{\rho}\frac{s^{\beta-1}}{\mu^2(s)}ds\Big{)}_{x}(\mu(\rho)u_{x}-\rho^{\gamma})^2\Big{]}(x,t)dxdt\\
&\quad\quad-\beta\int_{\mathbb{T}} \Big{(}u \Big{(}\int_{1}^{\rho}\frac{s^{\beta-1}}{\mu^2(s)}ds\Big{)}_{x}\mu(\rho)u_{x}\rho^{\gamma}\Big{)}(x,t)dx+\frac{\beta}{2}\int_{\mathbb{T}}\Big{[}u\Big{(}\int_{1}^{\rho}\frac{s^{\beta-1}}{\mu^2(s)}ds\Big{)}_{x}\rho^{2\gamma}\Big{]}(x,t)dx\\
&\quad=-\frac{\beta}{2}\int_{\mathbb{T}}(\rho^{\beta}|u_{x}|^2u_{x})(x,t)dx+\beta\int_{\mathbb{T}}\Big{(} u\int_{1}^{\rho}\frac{s^{\beta-1}}{\mu^2(s)}ds(\mu(\rho)u_{x}-\rho^{\gamma})(\rho u_{t}+\rho u u_{x}-\rho nw+\rho un) \Big{)}(x,t)dx\\
&\quad\quad+\frac{\beta}{2}\int_{\mathbb{T}}u_{x}\int_{1}^{\rho}\frac{s^{\beta-1}}{\mu^2(s)}(\mu(\rho)u_{x}-\rho^{\gamma})^2ds-\beta\int_{\mathbb{T}} \Big{[}u \Big{(}\int_{1}^{\rho}\frac{s^{\beta-1}}{\mu^2(s)}ds\Big{)}_{x}(\mu(\rho)u_{x}-\rho^{\gamma})\rho^{\gamma}\Big{]}(x,t)dx\\
&\quad\quad-\frac{\beta}{2}\int_{\mathbb{T}}\Big{[}u\Big{(}\int_{1}^{\rho}\frac{s^{\beta-1}}{\mu^2(s)}ds\Big{)}_{x}\rho^{2\gamma}\Big{]}(x,t)dx\\
&\quad\leq  \frac{1}{8}\|(\sqrt{\rho}u_{t})(t)\|_{L^2(\mathbb{T})}^2+C_{T}\Big{(}\|u_{x}(t)\|_{L^2(\mathbb{T})}^2+\|u(t)\|_{L^{\infty}(\mathbb{T})}^2+1\Big{)}\|u_{x}(t)\|_{L^2(\mathbb{T})}^2+C_{T}.
\end{split}
\end{equation}
Substituting the above estimates of $I^1_{i}$ $(i=1,2,3)$ into (\ref{tux}), multiplying the resulting inequality by $t$, and then integrating it over $[0,t]$, we obtain
\begin{equation}\nonumber
\begin{split}
&\frac{1}{2}t\|u_{x}(t)\|_{L^2(\mathbb{T})}^2+\frac{1}{2}\int_{0}^{t}\tau\|(\sqrt{\rho}u_{t})(\tau)\|_{L^2(\mathbb{T})}^2d\tau\\
&\quad\leq \frac{1}{2}\int_{0}^{t}\tau\|u_{x}(\tau)\|_{L^2(\mathbb{T})}^2d\tau+t\int_{\mathbb{T}}(\rho^{\gamma}u_{x})(x,t)dx-\int_{0}^{t}\tau\int_{\mathbb{T}}(\rho^{\gamma}u_{x})(x,\tau)dxd\tau\\
&\quad\quad+C_{T}\int_{0}^{t}\Big{(}1+\|u(\tau)\|_{L^{\infty}(\mathbb{T})}^2+\|u_{x}(\tau)\|_{L^2(\mathbb{T})}^2\Big{)}\tau\|u_{x}(\tau)\|_{L^2(\mathbb{T})}^2d\tau+C_{T},
\end{split}
\end{equation}
which together with the Gr${\rm{\ddot{o}}}$nwall inequality, $(\ref{basicCNSVFP})$, and (\ref{uinftyx}) gives rise to
\begin{equation}\label{weightp1}
\begin{split}
\sup_{t\in[0,T]}t\|u_{x}(t)\|_{L^2(\mathbb{T})}^2+\int_{0}^{t}\tau\Big{(}\|(\sqrt{\rho}u_{t})(\tau)\|_{L^2(\mathbb{T})}^2+\|u_{x}(\tau)\|_{L^{\infty}(\mathbb{T})}^2\Big{)}d\tau\leq C_{T}.
\end{split}
\end{equation}
By $(\ref{m1})_{2}$, $(\ref{basicCNSVFP})_{5}$, $(\ref{N1time})$, (\ref{weightp1}), and the Gagliardo-Nirenberg inequality, we have
\begin{equation}\nonumber
\begin{split}
&\int_{0}^{T}\|(\rho^{\gamma}-\mu(\rho)u_{x})(t)\|_{L^{\infty}(\mathbb{T})}^{p}dt\\
&\quad\leq  C\int_{0}^{T}\big{(} \|(\rho^{\gamma}-\mu(\rho)u_{x})(t)\|_{L^2(\mathbb{T})}^{\frac{p}{2}}\|(\rho u_{t}+\rho u u_{x}-\rho nw+\rho un)(t)\|_{L^2(\mathbb{T})}^{\frac{p}{2}}+\|(\rho^{\gamma}-\mu(\rho)u_{x})(t)\|_{L^{1}(\mathbb{T})}^{p}\big{)}dt\\
&\quad\leq C_{T}\sup_{t\in[0,T]}\Big{(}t^{\frac{1}{2}}\|u_{x}(t)\|_{L^2(\mathbb{T})}\Big{)}^{\frac{p}{2}}\Big{(}\int_{0}^{T}\big{(}t\|(\sqrt{\rho}u_{t})(t)\|_{L^2(\mathbb{T})}^2+t\|u_{x}(t)\|_{L^{\infty}(\mathbb{T})}^2\big{)}dt\Big{)}^{\frac{p}{4}}\\
&\quad\quad\times\Big{(}\int_{0}^{T}t^{-\frac{2p}{4-p}}dt\Big{)}^{\frac{4-p}{4}}+C_{T}\Big{(}\int_{0}^{T}t\|(\sqrt{\rho}u_{t})(t)\|_{L^2(\mathbb{T})}^2dt\Big{)}^{\frac{p}{4}}\Big{(}\int_{0}^{T}t^{-\frac{p}{4-p}}dt\Big{)}^{\frac{4-p}{4}}+C_{T}\leq C_{T},
\end{split}
\end{equation}
provided $p\in [1,\frac{4}{3})$. Due to $(\ref{basicCNSVFP})$ and (\ref{weightp1}), it also holds \begin{equation}\nonumber
\begin{split}
\sup_{t\in[0,T]}t\|u(t)\|_{H^1(\mathbb{T})}^2\leq C_{T}.
\end{split}
\end{equation}
The proof of Lemma \ref{lemma24} is completed.
\end{proof}

\subsection{Higher-order estimates}

We are ready to establish the higher-order estimates of the solution $(\rho,u,f)$.

\begin{lemma}\label{lemma25}
Let $T>0$, and $(\rho,u,f)$ be any regular solution to the IVP $(\ref{m1})$-$(\ref{kappa})$ for $t\in(0,T]$. Then, under the assumptions of Theorem \ref{theorem12}, it holds
\begin{equation}
\left\{
\begin{split}
&~~\rho(x,t)\geq \rho_{T}>0,\quad (x,t)\in\mathbb{T}\times[0,T],\\
&\sup_{t\in[0,T]}\|(\rho,u)(t)\|_{H^1(\mathbb{T})}+\|u\|_{L^2(0,T;H^2(\mathbb{T}))}+\|u_{t}\|_{L^2(0,T;L^2(\mathbb{T}))}\leq C_{T},\\
&\sup_{t\in[0,T]}\|f(t)\|_{L^2_{k_{0}}(\mathbb{T}\times\mathbb{R})}+\|f_{v}\|_{L^2(0,T;L^2_{k_{0}}(\mathbb{T}\times\mathbb{R}))}\leq C_{T},\\
&\sup_{t\in[0,T]}t^{\frac{1}{2}}\|(u_{xx},u_{t})(t)\|_{L^2(\mathbb{T})}\leq C_{T},\label{N2time}
\end{split}
\right.
\end{equation}
where $\rho_{T}>0$ and $C_{T}>0$ are two constants.
\end{lemma}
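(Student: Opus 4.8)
The plan is to upgrade the estimates of Lemmas \ref{lemma22}--\ref{lemma24} to the higher regularity class under the stronger hypotheses \eqref{a2}. First I would derive the lower bound $\rho\geq\rho_{T}>0$. Since $\inf_{x}\rho_{0}>0$, one can revisit the representation \eqref{newm4}--\eqref{Fxt} along the particle path $\mathcal{X}^{x,t}(s)$: the Lipschitz bound \eqref{Flip} on $F(\mathcal{X}^{x,t}(s),s)$ already controls the oscillation of $\theta(\rho)$ from above, and combined with $-\rho^{\gamma}\leq 0$ in \eqref{newm4} it gives a one-sided (lower) control of $\theta(\rho)$ along trajectories; since $\theta$ is increasing and $\theta(\rho)\to-\infty$ as $\rho\to 0^{+}$ (see \eqref{213}), this translates into $\rho(\mathcal{X}^{x,t}(t),t)\geq\rho_{T}>0$ with $\rho_{T}$ depending on $\inf\rho_{0}$, $\|u\|_{L^{1}(0,T;L^{\infty})}$, $T$, and the constants from Lemma \ref{lemma24}. (Note the bound need not be uniform in $T$ here, unlike $\rho_{+}$.) This is $(\ref{N2time})_{1}$.

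Next I would establish the $H^{1}$-estimate for $\rho$ and the accompanying velocity bounds in $(\ref{N2time})_{2}$. The key observation is \eqref{massrhox}: $\theta(\rho)$ satisfies a transport equation with source $-\mu(\rho)u_{x}$, so differentiating in $x$ gives a transport equation for $\theta(\rho)_{x}$ with source involving $u_{x}\theta(\rho)_{x}$ and $(\mu(\rho)u_{x})_{x}$; rewriting $(\mu(\rho)u_{x})_{x}$ via $(\ref{m1})_{2}$ in terms of $\rho u_{t}$, $\rho u u_{x}$, $(\rho^{\gamma})_{x}$ and the drag term, one obtains an estimate for $\|\theta(\rho)_{x}(t)\|_{L^{2}}$, hence for $\|\rho_{x}(t)\|_{L^{2}}$ once $\rho$ is bounded above and below, coupled with $\|\sqrt{\rho}u_{t}\|_{L^{2}(0,T;L^{2})}$. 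To close this one needs $\|\sqrt{\rho}u_{t}\|_{L^{2}(0,T;L^{2})}\leq C_{T}$ without the time weight — this requires redoing the computation \eqref{tux} of Lemma \ref{lemma24} but now starting from time $0$, which is legitimate because under \eqref{a2} we have $m_{0}/\rho_{0}\in H^{1}$ so $\|u_{x}(0)\|_{L^{2}}<\infty$ and $\|(\sqrt{\mu(\rho)}u_{x})(0)\|_{L^{2}}$ is finite. Then Gagliardo--Nirenberg / elliptic regularity for $u$ from $(\ref{m1})_{2}$ (treating it as $(\mu(\rho)u_{x})_{x}=$ known $L^{2}$ data) gives $u\in L^{2}(0,T;H^{2})$, and differentiating \eqref{tux} structure also yields $\sup_{t}\|u(t)\|_{H^{1}}\leq C_{T}$. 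The term $t^{1/2}\|(u_{xx},u_{t})(t)\|_{L^{2}}$ in $(\ref{N2time})_{4}$ comes from the weighted-in-time version exactly as in \eqref{weightp1}, now upgraded since the higher norms of $\rho$ are available.

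For $(\ref{N2time})_{3}$, the weighted $L^{2}_{k_{0}}$-estimate of $f$, I would multiply $(\ref{m1})_{3}$ by $\langle v\rangle^{2k_{0}}f$ and integrate over $\mathbb{T}\times\mathbb{R}$. The transport term $vf_{x}$ integrates to zero; the Fokker--Planck part $(\kappa(\rho)(u-v)f-\kappa(\rho)f_{v})_{v}$ produces, after integration by parts, the good dissipation $-\int\rho\langle v\rangle^{2k_{0}}|f_{v}|^{2}$ plus commutator terms where $\partial_{v}\langle v\rangle^{2k_{0}}\sim\langle v\rangle^{2k_{0}-1}$ hits $f_{v}$ or the drift $(u-v)f$; these are absorbed into the dissipation and into $\|f\|_{L^{2}_{k_{0}}}^{2}$ using $\rho\leq\rho_{+}$, $\rho\geq\rho_{T}$, $\|u\|_{L^{\infty}}$ bounded, and the already-known $L^{\infty}$-bound on $f$ to handle the worst velocity powers (the condition $k_{0}>7/2$ is what makes the lower-order velocity moments that appear — of order up to $2k_{0}-2$ or so against $\langle v\rangle^{2}f$, $f$ bounded — finite). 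Gr\"onwall then closes $\sup_{t}\|f(t)\|_{L^{2}_{k_{0}}}+\|f_{v}\|_{L^{2}(0,T;L^{2}_{k_{0}})}\leq C_{T}$, using $f_{0}\in L^{2}_{k_{0}}$.

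The main obstacle I expect is closing the $H^{1}$-bound on $\rho$: the coupling is genuinely nonlinear because $\|\rho_{x}\|_{L^{2}}$ feeds back through $(\rho^{\gamma})_{x}$ and through $(\mu(\rho))_{x}=\beta\rho^{\beta-1}\rho_{x}$ into the $u_{t}$-estimate \eqref{tux} (via the term $I_{3}^{1}$, which contains $\rho_{x}$), so one must set up a coupled Gr\"onwall argument for $\|\rho_{x}(t)\|_{L^{2}}^{2}+\|\sqrt{\mu(\rho)}u_{x}(t)\|_{L^{2}}^{2}$ with $\int_{0}^{t}\|\sqrt{\rho}u_{t}\|_{L^{2}}^{2}$, carefully tracking that the density-dependent viscosity contributes a cubic term $\int\rho^{\beta}|u_{x}|^{2}u_{x}$ controlled by $\|u_{x}\|_{L^{\infty}}\lesssim\|u_{x}\|_{L^{2}}^{1/2}\|u_{xx}\|_{L^{2}}^{1/2}$ and hence by the flux estimate. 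Once the density regularity is in hand, the remaining estimates are routine consequences of the linear(ized) structure and the bounds already proved in Lemmas \ref{lemma21}--\ref{lemma24}.
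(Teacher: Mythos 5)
Your overall strategy matches the paper for most of \eqref{N2time}: the lower bound on $\rho$ is obtained exactly as in the paper by integrating \eqref{newm4} along the particle paths and using \eqref{Flip} (note, though, that what you actually need there is $-\rho^{\gamma}\geq-\rho_{+}^{\gamma}$ from $(\ref{basicCNSVFP})_{5}$, not merely $-\rho^{\gamma}\leq 0$); the unweighted version of \eqref{tux} run from $t=0$, legitimate since $m_{0}/\rho_{0}\in H^{1}(\mathbb{T})$, is precisely how the paper gets $\sup_{t}\|u(t)\|_{H^{1}}+\|u_{t}\|_{L^{2}(0,T;L^{2})}\leq C_{T}$; and the $\langle v\rangle^{2k_{0}}f$ multiplier argument for $(\ref{N2time})_{3}$ is the same (the commutator terms close with $\rho\leq\rho_{+}$ and $\|u\|_{L^{\infty}}$ alone; the full strength $k_{0}>\frac{7}{2}$ is used later, e.g. $k_{0}-2>\frac12$ for \eqref{N2time121} and $k_{0}-3>\frac12$ in the uniqueness proof, not in this energy estimate). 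Where you genuinely diverge is the bound $\sup_{t}\|\rho_{x}(t)\|_{L^{2}}\leq C_{T}$: you differentiate the $\theta(\rho)$-transport equation \eqref{massrhox} and substitute the momentum equation, which forces you to secure $\|\sqrt{\rho}u_{t}\|_{L^{2}(0,T;L^{2})}$ and $\|u_{x}\|_{L^{1}(0,T;L^{\infty})}$ first; the paper instead introduces the effective velocity $U=u+\mathcal{I}(n)+\rho^{-2}\mu(\rho)\rho_{x}$, derives the damped equation \eqref{bd}, and bounds $\|\sqrt{\rho}U\|_{L^{2}}$ (hence $\|\rho_{x}\|_{L^{2}}$) directly from the basic estimates, independently of any $u_{t}$ bound and with a constant growing only linearly in $T$. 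Your route is viable, but only because the $I_{2}^{1},I_{3}^{1}$ estimates of Lemma \ref{lemma24} are built to avoid $\rho_{x}$; the ``coupled Gr\"onwall'' you sketch as a fallback, keeping $\rho_{x}$ inside $I_{3}^{1}$, produces a term of the type $\|u\|_{L^{\infty}}^{2}\|u_{x}\|_{L^{2}}^{2}\|\rho_{x}\|_{L^{2}}^{2}$, super-linear in the unknowns, and such a Gr\"onwall need not close on all of $[0,T]$; so commit either to the decoupled order (velocity estimate first, \`a la Lemma \ref{lemma24}, then $\rho_{x}$) or to the paper's $U$-equation.

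The one genuine gap is $(\ref{N2time})_{4}$, specifically $\sup_{t}t^{1/2}\|u_{t}(t)\|_{L^{2}}\leq C_{T}$. The mechanism of \eqref{weightp1}, which you invoke (``exactly as in \eqref{weightp1}''), only yields $\sup_{t}t\|u_{x}(t)\|_{L^{2}}^{2}$ together with the time-integrated quantity $\int_{0}^{T}t\|\sqrt{\rho}u_{t}(t)\|_{L^{2}}^{2}dt$; it cannot produce a pointwise-in-time bound on $\|u_{t}(t)\|_{L^{2}}$. The paper obtains it by differentiating the momentum equation in time (see \eqref{ut}), testing with $tu_{t}$, and estimating the resulting terms $I^{2}_{1},I^{2}_{2},I^{2}_{3}$; the delicate one is $-t\int_{\mathbb{T}}\rho(nw)_{t}u_{t}\,dx$, which is handled through the identity $(nw)_{t}=-(\int_{\mathbb{R}}|v|^{2}f\,dv)_{x}+\rho un-\rho nw$, an integration by parts onto $(\rho u_{t})_{x}$, and the bound $\sup_{t}\|\int_{\mathbb{R}}|v|^{2}f(\cdot,v,t)dv\|_{L^{2}(\mathbb{T})}\leq C_{T}$ coming from the $L^{2}_{k_{0}}$ estimate of $f$ (this is where $k_{0}-2>\frac12$ enters), while the terms with $\rho_{t}$ and $n_{t}=-(nw)_{x}$ use the already-proved $H^{1}$ bounds on $(\rho,u)$. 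None of this appears in your proposal, and without it the claimed bound on $t^{1/2}\|(u_{xx},u_{t})(t)\|_{L^{2}}$ is not justified (the $u_{xx}$ part then follows from the $u_{t}$ part by elliptic regularity as in \eqref{uxxtt}).
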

\begin{proof}
Let $\theta(\rho)$ be given by (\ref{213}), and the particle paths $\mathcal{X}^{x,t}(s)$ be defined by (\ref{overlinex}) . Integrating the equation (\ref{newm4}) over $[0,t]$ along the particle paths $\mathcal{X}^{x,t}(s)$ for any $(x,t)\in\mathbb{T}\times[0,T_{0}]$ and $s\in[0,t]$, we have by (\ref{basicCNSVFP}) and (\ref{newm4})-(\ref{Flip}) that
\begin{equation}\nonumber
\begin{split}
&\theta(\rho)(x,t)\geq\theta(\rho_{0})(\mathcal{X}^{x,t}(0))-T_{0}\rho_{+}^{\gamma}-\big{|}F(\mathcal{X}^{x,t}(\tau),\tau)|^{\tau=t}_{\tau=0}\big{|}\\
&\quad\quad\quad\quad\geq \inf_{x\in\mathbb{T}}\theta(\rho_{0})(x)-C_{2}-T_{0}\Big{(}\rho_{+}^{\gamma}+C_{3}+\frac{\beta C_{1}^2 \theta(\rho_{+})}{4}\Big{)},
\end{split}
\end{equation}
which proves $(\ref{N2time})_{1}$.

As in \cite{lhl4}, denote the effective velocity
\begin{equation}\label{m}
\begin{split}
U:=u+\mathcal{I}(n)+\rho^{-2}\mu(\rho)\rho_{x},
\end{split}
\end{equation}
so that the equation (\ref{newm1}) can be re-written as
 \begin{equation}
\begin{split}
\rho(U_{t}+uU_{x})+\gamma\rho^{\gamma+1}\mu(\rho)^{-1}U=\gamma\rho^{\gamma+1}\mu(\rho)^{-1}(u+\mathcal{I}(n))+\rho\int_{0}^{1}nw(y,t)dy.\label{bd}
\end{split}
\end{equation}
Multiplying (\ref{bd}) by $U$ and integrating the resulting equation by parts over $\mathbb{T}$, we deduce by (\ref{basicCNSVFP}) that
\begin{equation}\nonumber
\begin{split}
&\frac{1}{2}\frac{d}{dt}\int_{\mathbb{T}}(\rho |U|^2)(x,t)dx+\gamma\int_{\mathbb{T}}(\rho^{\gamma+1}\mu(\rho)^{-1}|U|^2)(x,t)dx\\
&\quad\leq \Big{(}\gamma \rho_{+}^{\gamma}\|(\sqrt{\rho}u)(t)\|_{L^2(\mathbb{T})}+\gamma\rho_{+}^{\gamma}\|\rho(t)\|_{L^1(\mathbb{T})}^{\frac{1}{2}}\|n(t)\|_{L^1(\mathbb{T})}\\
&\quad\quad+\|\rho(t)\|_{L^1(\mathbb{T})}^{\frac{1}{2}}\|f(t)\|_{L^1_{2}(\mathbb{T}\times\mathbb{R})}\Big{)}\Big{(}\int_{\mathbb{T}}(\rho |U|^2)(x,t)dx\Big{)}^{\frac{1}{2}}\\
&\quad\leq C\Big{(}\int_{\mathbb{T}}(\rho |U|^2)(x,t)dx\Big{)}^{\frac{1}{2}},
\end{split}
\end{equation}
which leads to
\begin{equation}\label{U2}
\begin{split}
&\sup_{t\in [0,T]}\|(\sqrt{\rho} U)(t)\|_{L^2(\mathbb{T})}\leq \|(\sqrt{\rho} U)(0)\|_{L^2(\mathbb{T})}+CT.
\end{split}
\end{equation}
By $(\ref{basicCNSVFP})_{5}$, (\ref{m}), and (\ref{U2}), we have
\begin{equation}\label{rhoH1}
\begin{split}
&\sup_{t\in[0,T]}\|\rho_{x}(t)\|_{L^2} \leq \rho_{+}^{\frac{3}{2}}\sup_{t\in[0,T]}\|\big{(}\sqrt{\rho}(U-u-\mathcal{I}(n))\big{)}(t)\|_{L^2}\\
&\quad\quad\quad\quad\quad\quad\quad\leq \rho_{+}^{\frac{3}{2}} \sup_{t\in[0,T]}\Big{(}\|(\sqrt{\rho}U)(t)\|_{L^2}+\|(\sqrt{\rho}u)(t)\|_{L^2}+\|\rho(t)\|_{L^1}^{\frac{1}{2}}\|n(t)\|_{L^1}\Big{)}\leq C_{T}.
\end{split}
\end{equation}
Similarly to (\ref{tux})-(\ref{weightp1}), one can obtain from $(\ref{basicCNSVFP})$, (\ref{N1time}), $(\ref{N2time})$, and the fact $\frac{m_{0}}{\rho_{0}}\in H^1(\mathbb{T})$ that
\begin{equation}\label{N2time12}
\begin{split}
\sup_{t\in[0,T]}\|u(t)\|_{H^1(\mathbb{T})}+\|u_{t}\|_{L^2(0,T;L^2(\mathbb{T}))}\leq C_{T},
\end{split}
\end{equation}
which together with $(\ref{m1})_{2}$, $(\ref{basicCNSVFP})$,  (\ref{rhoH1}) and the embedding $H^1(\mathbb{T})\hookrightarrow L^{\infty}(\mathbb{T})$ implies
\begin{equation}\label{N2time13}
\begin{split}
&\|u_{xx}\|_{L^2(0,T;L^2(\mathbb{T}))}\\
&\quad\leq\|\rho u_{t}+\rho u u_{x}+(\rho^{\gamma})_{x}-\rho nw+\rho nu\|_{L^2(0,T;L^2(\mathbb{T}))}\\
&\quad\leq C_{T}\Big{(}\|u_{t}\|_{L^2(0,T;L^2(\mathbb{T}))}+\|u\|_{L^{\infty}(0,T;L^{\infty}(\mathbb{T}))}\|u_{x}\|_{L^{2}(0,T;L^{2}(\mathbb{T}))}+\|\rho_{x}\|_{L^{\infty}(0,T;L^2(\mathbb{T}))}\\
&\quad\quad+\|nw\|_{L^{\infty}(0,T;L^2(\mathbb{T}))}+\|u\|_{L^{\infty}(0,T;L^{\infty}(\mathbb{T}))}\|n\|_{L^{\infty}(0,T;L^2(\mathbb{T}))}\Big{)}\leq C_{T}.
\end{split}
\end{equation}

Then, we multiply $(\ref{m1})_{3}$ by $\langle v\rangle^{2k_{0}}f$ and integrate the resulting equation by parts over $\mathbb{T}\times\mathbb{R}$ to get
\begin{equation}\nonumber
\begin{split}
&\frac{d}{dt}\|f(t)\|_{L^2_{k_{0}}(\mathbb{T}\times\mathbb{R})}^2+\|(\sqrt{\rho}f_{v})(t)\|_{L^2_{k_{0}}(\mathbb{T}\times\mathbb{R})}^2\leq C\|\rho(t)\|_{L^{\infty}(\mathbb{T})}\big{(}1+\|u(t)\|_{L^{\infty}(\mathbb{T})}\big{)}\|f(t)\|_{L^2_{k_{0}}(\mathbb{T}\times\mathbb{R})}^2,
\end{split}
\end{equation}
which together with $(\ref{basicCNSVFP})$, (\ref{N2time12}), and the Gr${\rm{\ddot{o}}}$nwall inequality leads to
\begin{equation}
\begin{split}
\sup_{t\in[0, T]}\|f(t)\|_{L^2_{k_{0}}(\mathbb{T}\times\mathbb{R})}^2+\|f_{v}\|_{L^2(0,T;L^2_{k_{0}}(\mathbb{T}\times\mathbb{R}))}^2\leq C_{T}.\label{N2time11}
\end{split}
\end{equation}
Due to (\ref{N2time11}) and the fact $k_{0}-2>\frac{1}{2}$, it also holds
\begin{equation}\label{N2time121}
\begin{split}
\sup_{t\in[0,T]}\|\int_{\mathbb{R}}|v|^2f(\cdot,v,t)dv\|_{L^2(\mathbb{T})}\leq \Big{(}\int_{\mathbb{R}}\frac{1}{\langle v\rangle^{2(k_{0}-2)}}dv\Big{)}^{\frac{1}{2}}\sup_{t\in[0,T]}\|f(t)\|_{L^2_{k_{0}}(\mathbb{T}\times\mathbb{R})}\leq C_{T}.
\end{split}
\end{equation}

Finally, we differentiate the equation $(\ref{m1})_{2}$ with respect to $t$ to obtain
\begin{equation}\label{ut}
\begin{split}
&\rho(u_{t}+u u_{xt})-(\mu(\rho)u_{xt} )_{x}+\rho nu_{t}\\
&\quad=-\rho_{t}(u_{t}+u u_{x})-\rho u_{t}u_{x}-(\rho^{\gamma})_{xt}+((\mu(\rho))_{t}u_{x})_{x}+\rho_{t}(nw-nu)-\rho u n_{t}+\rho (nw)_{t}.
\end{split}
\end{equation}
One deduces after a direct computation that
\begin{equation}
\begin{split}
&\frac{1}{2}\frac{d}{dt}\Big{(}t\|(\sqrt{\rho}u_{t})(t)\|_{L^2(\mathbb{T})}^2\Big{)}+t\|(\sqrt{\mu(\rho)}u_{xt})(t)\|_{L^2(\mathbb{T})}^2+t\|(\sqrt{\rho n}u_{t})(t)\|_{L^2(\mathbb{T})}^2\\
&\quad=\frac{1}{2}\|(\sqrt{\rho}u_{t})(t)\|_{L^2(\mathbb{T})}^2+\sum_{i=1}^3 I^2_{i},\label{tut}
\end{split}
\end{equation}
where $I^2_{i}, i=1,2,3,$ are given by
\begin{equation}\nonumber
\begin{split}
&I^2_{1}:=t\int_{\mathbb{T}}\Big{(}\big{[}-\rho_{t}(u_{t}+u u_{x})-\rho u_{t}u_{x}-(\rho^{\gamma})_{xt}+((\mu(\rho))_{t}u_{x})_{x}+\rho_{t}(nw-nu)\big{]}u_{t} \Big{)}(x,t)dx,\\
&I^2_{2}:=t\int_{\mathbb{T}}( \rho un_{t} u_{t})(x,t)dx,\\
&I^2_{3}:=-t\int_{\mathbb{T}}[\rho(nw)_{t} u_{t}](x,t)dx.
\end{split}
\end{equation}
The right-hand side term $I^2_{i}$ $(i=1,2,3)$ can be estimated as follows. It holds by $(\ref{basicCNSVFP})$, (\ref{N2time12}), and (\ref{N2time121}) that
\begin{equation}\nonumber
\begin{split}
&I^2_{1}=t\int_{\mathbb{T}}\Big{(}-\rho_{t}(u_{t}+uu_{x})u_{t}-\rho u_{x}|u_{t}|^2+(\rho^{\gamma})_{t}u_{tx}-(\mu(\rho))_{t}u_{x}u_{xt}+\rho_{t}(nw-nu)u_{t}\Big{)}(x,t)dx\\
&\quad\leq t\|\rho_{t}(t)\|_{L^2(\mathbb{T})}\|(u_{t}+u u_{x})(t)\|_{L^2(\mathbb{T})}\|u_{t}(x)\|_{L^{\infty}(\mathbb{T})}+t\|\rho(t)\|_{L^{\infty}(\mathbb{T})}\|u_{x}(t)\|_{L^{\infty}(\mathbb{T})}\|u_{t}(t)\|_{L^2(\mathbb{T})}^2\\
&\quad\quad+t\big{(}\|(\rho^{\gamma})_{t}(t)\|_{L^2(\mathbb{T})}+\|(\mu(\rho))_{t}(t)\|_{L^2(\mathbb{T})}\big{)}\|u_{xt}(t)\|_{L^2(\mathbb{T})}\\
&\quad\quad+t\|\rho_{t}(t)\|_{L^2(\mathbb{T})}\big{(}\|nw(t)\|_{L^2(\mathbb{T})}+\|u(t)\|_{L^{\infty}(\mathbb{T})}\|n(t)\|_{L^2(\mathbb{T})}\big{)}\|u_{t}(t)\|_{L^{\infty}(\mathbb{T})}\\
&\quad\leq \frac{t}{4}\|(\sqrt{\mu(\rho)}u_{xt})(t)\|_{L^2(\mathbb{T})}^2+C_{T}(1+\|u_{x}(t)\|_{L^{\infty}(\mathbb{T})})t\|u_{t}(t)\|_{L^2(\mathbb{T})}^2+C_{T}.
\end{split}
\end{equation}
By virtue of (\ref{uinftyx}), (\ref{rhoH1})-(\ref{N2time12}) and the fact $n_{t}=-(nw)_{x}$,
we have
\begin{equation}\nonumber
\begin{split}
&I^2_{2}=t\int_{\mathbb{T}}[\rho(u_{x}u_{t}+u u_{xt})nw+\rho_{x} u u_{t} ](x,t)dx\\
&\quad\leq t\big{(}\|u_{x}(t)\|_{L^{\infty}(\mathbb{T})}\|u_{t}(t)\|_{L^2(\mathbb{T})}+\|u(t)\|_{L^{\infty}(\mathbb{T})}\|u_{xt}(t)\|_{L^2(\mathbb{T})} \\
&\quad\quad+\|\rho_{x}(t)\|_{L^2(\mathbb{T})}\|u(t)\|_{L^{\infty}(\mathbb{T})}\|u_{t}(t)\|_{L^2(\mathbb{T})} \big{)}\|nw(t)\|_{L^2(\mathbb{T})}\\
&\quad\leq \frac{t}{4}\|(\sqrt{\mu(\rho)}u_{xt})(t)\|_{L^2(\mathbb{T})}^2+ C_{T}(1+\|u_{x}(t)\|_{L^{\infty}(\mathbb{T})})t\|u_{t}(t)\|_{L^2(\mathbb{T})}^2+C_{T}.
\end{split}
\end{equation}
In addition, it is easy to verify 
$$
(nw)_{t}=-(\int_{\mathbb{R}} |v|^2fdv)_{x}+\rho un -\rho nw,
$$
which together with $(\ref{basicCNSVFP})$, $(\ref{N2time12})$ and  (\ref{N2time121}) yields 
\begin{equation}\nonumber
\begin{split}
&I^2_{3}=t\int_{\mathbb{T}}(\int_{\mathbb{R}} |v|^2f(x,v,t)dv) (\rho u_{t})_{x}(x,t)dx-t\int_{\mathbb{T}}[(\rho un -\rho nw)\rho u_{t}](x,t)dx\\
&\quad\leq t\|\int_{\mathbb{R}} |v|^2f(\cdot,v,t)dv\|_{L^2(\mathbb{T})}\big{(}\|\rho(t)\|_{L^{\infty}(\mathbb{T})}\|u_{xt}(t)\|_{L^2(\mathbb{T})}+\|\rho_{x}(t)\|_{L^2(\mathbb{T})}\|u_{t}(t)\|_{L^{\infty}(\mathbb{T})}\big{)}\\
&\quad\quad+t\|(\sqrt{\rho}u)(t)\|_{L^2(\mathbb{T})} \|\rho(t)\|_{L^{\infty}(\mathbb{T})}^{\frac{3}{2}}\big{(}\|u(t)\|_{L^{\infty}(\mathbb{T})}\|n(t)\|_{L^2(\mathbb{T})}+\|nw(t)\|_{L^2(\mathbb{T})}\big{)}\|u_{t}(t)\|_{L^2(\mathbb{T})}\\
&\quad\leq \frac{t}{4}\|(\sqrt{\mu(\rho)}u_{xt})(t)\|_{L^2(\mathbb{T})}^2+C_{T}t\|u_{t}(t)\|_{L^2(\mathbb{T})}^2+C_{T}.
\end{split}
\end{equation}
Substituting the above estimates of $I^2_{i}$ $(i=1,2,3)$ into (\ref{tut}) and applying the Gr${\rm{\ddot{o}}}$nwall inequality, we obtain
\begin{equation}\label{tut1}
\begin{split}
&\sup_{t\in [0,T]}t\|u_{t}(t)\|_{L^2(\mathbb{T})}^2\leq C_{T}.
\end{split}
\end{equation}
By $(\ref{m1})_{2}$, $(\ref{basicCNSVFP})$, (\ref{N2time12})-(\ref{rhotl2}) and (\ref{tut1}), it also holds
\begin{equation}\label{uxxtt}
\begin{split}
&\sup_{t\in[0,T]}t^{\frac{1}{2}}\|u_{xx}(t)\|_{L^2(\mathbb{T})}\leq\sup_{t\in [0,T]}t^{\frac{1}{2}}\|(\rho u_{t}+\rho u u_{x}+(\rho^{\gamma})_{x}-\rho nw+\rho nu)(t)\|_{L^2(\mathbb{T})}\leq C_{T}.
\end{split}
\end{equation}
The combination of (\ref{N2time12})-(\ref{N2time11}) and (\ref{tut1})-(\ref{uxxtt}) gives rise to (\ref{N2time}). The proof of Lemma \ref{lemma25} is completed.
\end{proof}

Inspired by \cite{chen1,herau1,villani1}, we have the following hypoellipticity estimates of the distribution function $f$ for the Vlasov-Fokker-Planck equation $(\ref{m1})_{3}$:
\begin{lemma}\label{lemma26}
Let $T>0$, and $(\rho,u,f)$ be any regular solution to the IVP $(\ref{m1})$-$(\ref{kappa})$ for $t\in(0,T]$. Then, under the assumptions of Theorem \ref{theorem12}, it holds
\begin{equation}
\begin{split}
&\sup_{t\in[0,T]}\big{(}t\|f_{v}(t)\|_{L^2_{k_{0}-2}(\mathbb{T}\times\mathbb{R})}^2+t^2\|f_{vv}(t)\|_{L^2_{k_{0}-2}(\mathbb{T}\times\mathbb{R})}^2+t^{3}\|(f_{x},f_{vvv})(t)\|_{L^2_{k_{0}-2}(\mathbb{T}\times\mathbb{R})}^2\big{)}\leq C_{T},\label{N2timew2}
\end{split}
\end{equation}
for $C_{T}>0$ a constant.
\end{lemma}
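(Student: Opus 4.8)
The plan is to run a H\'erau--Villani-type hypoelliptic regularization argument, organized as a chain of weighted energy estimates assembled into a time-weighted Lyapunov functional whose powers of $t$ are dictated by the parabolic scaling $v\sim t^{1/2}$, $x\sim t^{3/2}$ of the operator $\partial_t+v\partial_x-\rho\partial_v^2$. Write $l:=k_0-2>\frac32$ and rewrite $(\ref{m1})_3$ in the non-divergence form $f_t+vf_x=\mathcal Lf$ with $\mathcal Lf:=\rho f_{vv}-\rho(u-v)f_v+\rho f$. Throughout I would use, without further comment, the bounds already established: $\rho_T\le\rho\le\rho_+$, $\|\rho\|_{L^\infty_TH^1}+\|u\|_{L^2_TH^2}+\|u_t\|_{L^2_TL^2}+\|f\|_{L^\infty_TL^2_{k_0}}+\|f_v\|_{L^2_TL^2_{k_0}}\le C_T$ from Lemmas \ref{lemma22}--\ref{lemma25}, the short-time bound $t^{1/2}\|u\|_{H^1}+\|\rho^\gamma-\mu(\rho)u_x\|_{L^p_TL^\infty}\le C_T$ from Lemma \ref{lemma24}, $\|u\|_{L^\infty_{x,t}}\le C_T$, and the one-dimensional interpolation inequality $\|g\|_{L^\infty(\mathbb T)}\le C\|g\|_{L^2(\mathbb T)}^{1/2}\|g_x\|_{L^2(\mathbb T)}^{1/2}+C\|g\|_{L^2(\mathbb T)}$.

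First I would differentiate $\mathcal Lf$ once in $v$ and once in $x$, obtaining $(f_v)_t+v(f_v)_x+f_x=\mathcal L(f_v)+\rho f_v$ and $(f_x)_t+v(f_x)_x=\mathcal L(f_x)+\rho_xf_{vv}-(\rho_x(u-v)+\rho u_x)f_v+\rho_xf$; the decisive feature is the source term $+f_x$ in the $f_v$-equation, produced by the commutator $[\partial_v,v\partial_x]=\partial_x$. Testing the first equation against $\langle v\rangle^{2l}f_v$ and the second against $\langle v\rangle^{2l}f_x$, integrating by parts, noting the transport terms vanish by periodicity in $x$, and writing $\langle\mathcal L\phi,\phi\rangle_{L^2_l}\le-\tfrac12\|\sqrt\rho\phi_v\|_{L^2_l}^2+C_T\|\phi\|_{L^2_l}^2$ after absorbing the weight-derivative remainders using $\rho\ge\rho_T$, one reaches $\frac{d}{dt}\|f_v\|_{L^2_l}^2+\rho_T\|f_{vv}\|_{L^2_l}^2\le C_T\|f_v\|_{L^2_l}^2+\|f_x\|_{L^2_l}^2$ and, for the $f_x$-equation, $\frac{d}{dt}\|f_x\|_{L^2_l}^2+\rho_T\|f_{xv}\|_{L^2_l}^2\le C_T\|f_x\|_{L^2_l}^2+C_T(1+\|\rho_x\|_{L^2_x}^2+\|u_x\|_{L^\infty_x}^2)(\|f_{vv}\|_{L^2_l}^2+\|f_v\|_{L^2_l}^2)$, where the forcing terms carrying $\rho_x\in L^2_x$ are controlled by the one-dimensional Sobolev inequality above, generating a small multiple of $\|f_{xv}\|_{L^2_l}^2$ that is absorbed on the left.

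Next I would form the Lyapunov functional
\[
\Phi_1(t):=at\|f_v(t)\|_{L^2_l}^2+2bt^2\langle f_v(t),f_x(t)\rangle_{L^2_l}+ct^3\|f_x(t)\|_{L^2_l}^2,
\]
with $0<b^2\ll ac$ (so $\Phi_1\simeq t\|f_v\|_{L^2_l}^2+t^3\|f_x\|_{L^2_l}^2$) and with $a,b,c$ taken large enough that the negative term $-2bt^2\|f_x\|_{L^2_l}^2$ produced when $\frac{d}{dt}$ hits the factor $t^2$ in the cross term together with the commutator source $-f_x$ dominates the uncontrolled $+3ct^2\|f_x\|_{L^2_l}^2$ coming from differentiating $t^3\|f_x\|_{L^2_l}^2$ --- this is precisely the hypoelliptic absorption mechanism. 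After further absorbing the dissipations $t\|f_{vv}\|_{L^2_l}^2$, $t^3\|f_{xv}\|_{L^2_l}^2$ against the remaining cross-term contributions and using $\int_0^T\|f_v\|_{L^2_{k_0}}^2\le C_T$, Gr\"onwall's inequality gives $\sup_{[0,T]}(t\|f_v\|_{L^2_l}^2+t^3\|f_x\|_{L^2_l}^2)+\int_0^Tt^3\|f_{xv}\|_{L^2_l}^2\,dt\le C_T$. I would then bootstrap in the $v$-variable: differentiating the $f_v$-equation once more in $v$ yields the $f_{vv}$-equation with commutator source $-f_{vx}$, and once more the $f_{vvv}$-equation with source $-f_{vvx}$; running the analogous weighted estimates with the heavier weights $t^2$ and $t^3$, controlling $f_{vx}$ by the spacetime bound $\int_0^Tt^3\|f_{xv}\|_{L^2_l}^2$ just obtained (and $f_{vvx}$ by one further $v$-differentiated estimate of the $f_x$-equation), all bad terms are absorbed or $t$-integrable, and Gr\"onwall closes the bounds for $t^2\|f_{vv}\|_{L^2_l}^2$ and $t^3\|f_{vvv}\|_{L^2_l}^2$, which is $(\ref{N2timew2})$.

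The main obstacle I anticipate is the bookkeeping: arranging the chain of absorption constants $a,b,c$ (and their counterparts at the $f_{vv}$- and $f_{vvv}$-levels) so that every mixed $x$--$v$ term and, above all, the terms carrying the merely $L^2_x$ coefficient $\rho_x$ (and $u_x$, available in $L^\infty_x$ only with the time-weighted $t^{1/2}$ bound of Lemma \ref{lemma24}) are genuinely dominated after the one-dimensional interpolation, while keeping the powers of $t$ low enough at $t=0$ that $\Phi_1$ and its higher analogues start from finite values given only $f_0\in L^2_{k_0}\cap L^\infty$, which supplies no $x$- or $v$-derivatives. Tracking the weight so that exactly two powers of $\langle v\rangle$ are lost --- they are produced when $\partial_v$ falls on the drag factor $(u-v)$ inside $\mathcal L$, forcing $\|f_v\|_{L^2_{k_0}}$-control rather than $\|f_v\|_{L^2_{k_0-2}}$-control to be invoked --- is the other delicate point.
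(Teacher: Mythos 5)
Your first stage reproduces the paper's argument almost verbatim: the same commutator structure (the source $f_x$ in the $f_v$-equation), the same time-weighted Lyapunov functional $t\|f_v\|_{L^2_{k_0-2}}^2+\eta^2t^2\langle f_v,f_x\rangle_{L^2_{k_0-2}}+\eta^3t^3\|f_x\|_{L^2_{k_0-2}}^2$ with the cross term generating the dissipation $t^2\|f_x\|_{L^2_{k_0-2}}^2$, and the same one-dimensional Sobolev device $\sup_x|g|\le\|g\|_{L^1}+\|g_x\|_{L^1}$ to handle the coefficients $\rho_x\in L^2(\mathbb{T})$, $u_x$ in the $f_x$-equation; your $f_{vv}$-estimate with weight $t^2$, where the commutator source $f_{vx}$ is paid for by the already obtained integral $\int_0^T t^3\|f_{vx}\|_{L^2_{k_0-2}}^2dt$, also closes.

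The gap is in your final step for $f_{vvv}$. You propose to control the commutator source $f_{vvx}$ by ``one further $v$-differentiated estimate of the $f_x$-equation,'' i.e.\ an energy estimate for $f_{xv}$. But differentiating the $f_x$-equation in $v$ produces, through $\partial_v\big(v(f_x)_x\big)=f_{xx}+v(f_{xv})_x$, the term $f_{xx}$, which is controlled by nothing available at this stage (and indeed by nothing in the lemma: no second $x$-derivative of $f$ is ever bounded). To estimate $f_{xx}$ you would have to launch a complete second level of the hypoelliptic hierarchy, with new cross terms coupling $f_{xv}$ and $f_{xx}$ and weights of order $t^4$--$t^6$, which is a much larger undertaking and not what your plan describes. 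The paper sidesteps this entirely: in the thrice-$v$-differentiated equation the mixed term appears in divergence form, $3t^3 f_{xvv}=\partial_v(3t^3 f_{xv})$, and is left there; testing with $\langle v\rangle^{2(k_0-2)}t^3f_{vvv}$ and integrating by parts, the extra $\partial_v$ lands on the test function, the resulting $t^3\|f_{vvvv}\|$-factor is absorbed into the Fokker--Planck dissipation $\|\sqrt{\rho}\,(t^3f_{vvv})_v\|^2$, and only $\int_0^T\big(t^2\|f_{vv}\|_{L^2_{k_0-2}}^2+t^3\|f_{xv}\|_{L^2_{k_0-2}}^2\big)dt\le C_T$ is needed, which the first stage already supplies; the bound $t\|f_{vv}\|_{L^2_{k_0-2}}\le C_T$ is then recovered for free by Gagliardo--Nirenberg interpolation between $t^{1/2}\|f_v\|_{L^2_{k_0-2}}$ and $t^{3/2}\|f_{vvv}\|_{L^2_{k_0-2}}$. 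Replacing your $f_{xv}$-estimate by this integration-by-parts observation repairs the argument; as written, that step fails.
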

\begin{proof}
First, we differentiate the Vlasov-Fokker-Planck equation $(\ref{m1})_{3}$ with respect to $v$ to obtain
\begin{equation}\label{fv}
\begin{split}
(f_{v})_{t}+v(f_{v})_{x}+ (\rho(u-v)f_{v}-\rho (f_{v})_{v})_{v}=\rho  f_{v}+f_{x}.
\end{split}
\end{equation}
Multiplying (\ref{fv}) by $\langle v\rangle^{2(k_{0}-2)}f_{v}$, integrating the resulting equation by parts over $\mathbb{T}\times\mathbb{R}$, and then making use of $(\ref{basicCNSVFP})_{5}$ and (\ref{N2time}), we have
\begin{align}
&\frac{1}{2}\frac{d}{dt}\|f_{v}(t)\|_{L^2_{k_{0}-2}(\mathbb{T}\times\mathbb{R})}^2+\|(\sqrt{\rho}f_{vv})(t)\|_{L_{k_{0}-2}^2(\mathbb{T}\times\mathbb{R})}^2\nonumber\\
&~~\leq C_{T}\|f_{v}(t)\|_{L^2_{k_{0}-2}(\mathbb{T}\times\mathbb{R})}^2+\|f_{x}(t)\|_{L^2_{k_{0}-2}(\mathbb{T}\times\mathbb{R})}\|f_{v}(t)\|_{L^2_{k_{0}-2}(\mathbb{T}\times\mathbb{R})},\label{fvwp}
\end{align}
from which we infer
\begin{align}
&\frac{1}{2}\frac{d}{dt}\Big{(}t\|f_{v}(t)\|_{L^2_{k_{0}-2}(\mathbb{T}\times\mathbb{R})}^2\Big{)}+t\|(\sqrt{\rho}f_{vv})(t)\|_{L_{k_{0}-2}^2(\mathbb{T}\times\mathbb{R})}^2\nonumber\\
&~~~\leq C_{T}\|f_{v}(t)\|_{L^2_{k_{0}-2}(\mathbb{T}\times\mathbb{R})}^2+t\|f_{v}(t)\|_{L^2_{k_{0}-2}(\mathbb{T}\times\mathbb{R})}\|f_{x}(t)\|_{L^2_{k_{0}-2}(\mathbb{T}\times\mathbb{R})}.\label{fvw}
\end{align}
 Similarly, due to the equation
\begin{equation}\label{fx}
\begin{split}
(f_{x})_{t}+v(f_{x})_{x}+ (\rho(u-v)f_{x}-\rho(f_{x})_{v})_{v}=-(\rho _{x}(u-v)f+\rho  u_{x}f-\rho _{x}f_{v})_{v},
\end{split}
\end{equation}
one has
\begin{align}
&\frac{1}{2}\frac{d}{dt}\|f_{x}(t)\|_{L^2_{k_{0}-2}(\mathbb{T}\times\mathbb{R})}^2+\|(\sqrt{\rho}f_{vx})(t)\|_{L_{k_{0}-2}^2(\mathbb{T}\times\mathbb{R})}^2\nonumber\\
&~~~\leq C_{T}\|f_{x}(t)\|_{L^2_{k_{0}-2}(\mathbb{T}\times\mathbb{R})}^2\nonumber\\
&\quad~~~+C_{T}\|(\rho _{x},u_{x})(t)\|_{L^2(\mathbb{T})}\sup_{x\in\mathbb{T}}\|(\langle v\rangle^{k_{0}-1}f,\langle v\rangle^{k_{0}-2}f_{v})(x,t)\|_{L^2(\mathbb{R})}\|f_{vx}(t)\|_{L_{k_{0}-2}^2(\mathbb{T}\times\mathbb{R})}.\label{fx1}
\end{align}
 To overcome the difficulties caused by the loss of one velocity weight $v$ and the low regularity of $\rho_{x}$ on the right-hand side of (\ref{fx1}), we need
\begin{align}
&\sup_{x\in\mathbb{T}}\int_{\mathbb{R}}\langle v\rangle^{2(k_{0}-1)} f^2(x,v,t)dv\nonumber\\
&\leq \int_{\mathbb{T}\times\mathbb{R}} \langle v\rangle^{2(k_{0}-1)} f^2(x,v,t)dvdx+2\|f(t)\|_{L^2_{k_{0}}(\mathbb{T}\times\mathbb{R})}\|f_{x}(t)\|_{L^2_{k_{0}-2}(\mathbb{T}\times\mathbb{R})},\label{f5}
\end{align}
where one has used the fact $\sup_{x\in\mathbb{T}}|g(x)|\leq \|g\|_{L^1(\mathbb{T})}+\|g_{x}\|_{L^1(\mathbb{T})}$ for any $g\in W^{1,1}(\mathbb{T})$. Similarly, we get 
\begin{align}
&\sup_{x\in\mathbb{T}}\int_{\mathbb{R}}\langle v\rangle^{2(k_{0}-2)}|f_{v}|^2(x,v,t)dv\nonumber\\
&\leq \int_{\mathbb{T}\times\mathbb{R}}\langle v\rangle^{2(k_{0}-2)}|f_{v}|^2(x,v,t)dvdx+2\|f_{v}(t)\|_{L^2_{k_{0}-2}(\mathbb{T}\times\mathbb{R})}\|f_{vx}(t)\|_{L^2_{k_{0}-2}(\mathbb{T}\times\mathbb{R})}.\label{fv3}
\end{align}
We combine (\ref{fx1})-(\ref{fv3}) together to have
\begin{align}
&\frac{1}{2}\frac{d}{dt}\Big{(}t^3\|f_{x}(t)\|_{L^2_{k_{0}-2}(\mathbb{T}\times\mathbb{R})}^2\Big{)}+t^3\|(\sqrt{\rho}f_{vx})(t)\|_{L_{k_{0}-2}^2(\mathbb{T}\times\mathbb{R})}^2\nonumber\\
&~~\leq C_{T}t^2\|f_{x}(t)\|_{L^2_{k_{0}-2}(\mathbb{T}\times\mathbb{R})}^2\nonumber\\
&~~\quad+C_{T}\Big{(}\|f(t)\|_{L^{2}_{k_{0}-1}(\mathbb{T}\times\mathbb{R})}+\|f_{v}(t)\|_{L^{2}_{k_{0}-2}(\mathbb{T}\times\mathbb{R})}+\|f(t)\|_{L^2_{k_{0}}(\mathbb{T}\times\mathbb{R})}^{\frac{1}{2}}t^{\frac{1}{2}}\|f_{x}(t)\|_{L^2_{k_{0}-2}(\mathbb{T}\times\mathbb{R})}^{\frac{1}{2}}\nonumber\\
&~~\quad + \|f_{v}(t)\|_{L^2_{k_{0}-2}(\mathbb{T}\times\mathbb{R})}^{\frac{1}{2}}t^{\frac{3}{4}} \|f_{vx}(t)\|_{L^2_{k_{0}-2}(\mathbb{T}\times\mathbb{R})}^{\frac{1}{2}}\Big{)}t^{\frac{3}{2}}\|f_{vx}(t)\|_{L_{k_{0}-2}^2(\mathbb{T}\times\mathbb{R})}.\label{fxw}
\end{align}
To obtain the dissipation $t^2\|f_{x}(t)\|_{L^2_{k_{0}-2}(\mathbb{T}\times\mathbb{R})}^2$, it follows by (\ref{fv}) and (\ref{fx}) that
\begin{equation}\label{fvfx}
\begin{split}
&\frac{d}{dt}(f_{v}f_{x})+v(f_{v}f_{x})_{x}+|f_{x}|^2=-f_{x}(\rho(u-v)f-\rho f_{v})_{vv}-f_{v}(\rho(u-v)f-\rho f_{v})_{vx}.
\end{split}
\end{equation}
Multiplying (\ref{fvfx}) by $t^2\langle v\rangle^{2(k_{0}-2)}$ and integrating the resulting equation by parts over $\mathbb{T}\times\mathbb{R}$, we obtain
\begin{align}
&\frac{d}{dt}\Big{(}t^2\int_{\mathbb{T}\times\mathbb{R}} \langle v\rangle^{2(k_{0}-2)}(f_{v}f_{x})(x,v,t)dvdx\Big{)}+t^2\|f_{x}(t)\|_{L^{2}_{k_{0}-2}(\mathbb{T}\times\mathbb{R})}^2\nonumber\\
&=2t \int_{\mathbb{T}\times\mathbb{R}} \langle v\rangle^{2(k_{0}-2)}(f_{v}f_{x})(x,v,t)dvdx\nonumber\\
&\quad\quad+t^2\int_{\mathbb{T}\times\mathbb{R}}\Big{[}\Big{(} \big{(} \langle v\rangle^{2(k_{0}-2)}f_{x} \big{)}_{v}+\langle v\rangle^{2(k_{0}-2)}f_{vx}\Big{)}\Big{(}\rho(u-v)f_{v}-\rho f-\rho f_{vv}\Big{)}\Big{]}(x,v,t)dvdx\nonumber\\
&\leq C_{T}\Big{(}\|f(t)\|_{L^2_{k_{0}-2}(\mathbb{T}\times\mathbb{R})}+\|f_{v}(t)\|_{L^2_{k_{0}-1}(\mathbb{T}\times\mathbb{R})}+t^{\frac{1}{2}}\|f_{vv}(t)\|_{L^2_{k_{0}-2}(\mathbb{T}\times\mathbb{R})}\Big{)}\nonumber\\
&\quad~\times\Big{(} t\|f_{x}(t)\|_{L^2_{k_{0}-2}(\mathbb{T}\times\mathbb{R})}+t^{\frac{3}{2}} \|f_{vx}(t)\|_{L^2_{k_{0}-2}(\mathbb{T}\times\mathbb{R})}\Big{)}.\label{fxfvw}
\end{align}
For the constant $\eta\in (0,1)$ to be determined, introducing 
\begin{equation}\nonumber
\left\{
\begin{split}
&L^{\eta}(t):=\frac{1}{2}t\|f_{v}(t)\|_{L^2_{k_{0}-2}(\mathbb{T}\times\mathbb{R})}^2+\frac{1}{2}\eta^3t^3\|f_{x}(t)\|_{L^2_{k_{0}-2}(\mathbb{T}\times\mathbb{R})}^2+\eta^{2}t^2\int_{\mathbb{T}\times\mathbb{R}} \langle v\rangle^{2(k_{0}-2)}(f_{v}f_{x})(x,v,t)dvdx,\\
&D^{\eta}(t):=\|(\sqrt{\rho} f_{vx})(t)\|_{L_{k_{0}-2}^2(\mathbb{T}\times\mathbb{R})}^2+\eta^3 t^3\|(\sqrt{\rho}f_{vx})(t)\|_{L_{k_{0}-2}^2(\mathbb{T}\times\mathbb{R})}^2+\eta^{2}t^2\|f_{x}(t)\|_{L^{2}_{k_{0}-2}(\mathbb{T}\times\mathbb{R})}^2,
\end{split}
\right.
\end{equation}
 we obtain from  (\ref{fvw}), (\ref{fxw}), and (\ref{fxfvw}) that
\begin{align}
&\frac{d}{dt}L^{\eta}(t)+D^{\eta}(t) \nonumber\\
&\leq C_{T}\|f_{v}(t)\|_{L^2_{k_{0}-2}(\mathbb{T}\times\mathbb{R})}^2+t\|f_{v}(t)\|_{L^2_{k_{0}-2}(\mathbb{T}\times\mathbb{R})}\|f_{x}(t)\|_{L^2_{k_{0}-2}(\mathbb{T}\times\mathbb{R})} +C_{T}\eta^3t^2\|f_{x}(t)\|_{L^2_{k_{0}-2}(\mathbb{T}\times\mathbb{R})}^2\nonumber\\
&\quad+C_{T}\eta^{\frac{3}{4}}\Big{(}\|f(t)\|_{L^{2}_{k_{0}-1}(\mathbb{T}\times\mathbb{R})}+\|f_{v}(t)\|_{L^{2}_{k_{0}-2}(\mathbb{T}\times\mathbb{R})}+\|f(t)\|_{L^2_{k_{0}}(\mathbb{T}\times\mathbb{R})}^{\frac{1}{2}}\eta^{\frac{1}{2}}t^{\frac{1}{2}}\|f_{x}(t)\|_{L^2_{k_{0}-2}(\mathbb{T}\times\mathbb{R})}^{\frac{1}{2}}\nonumber\\
&\quad + \|f_{v}(t)\|_{L^2_{k_{0}-2}(\mathbb{T}\times\mathbb{R})}^{\frac{1}{2}}\eta^{\frac{3}{4}}t^{\frac{3}{4}} \|f_{vx}(t)\|_{L^2_{k_{0}-2}(\mathbb{T}\times\mathbb{R})}^{\frac{1}{2}}\Big{)} \eta^{\frac{3}{2}}t^{\frac{3}{2}}\|f_{vx}(t)\|_{L_{k_{0}-2}^2(\mathbb{T}\times\mathbb{R})}\nonumber\\
&\quad+C_{T}\eta^{\frac{1}{2}}\Big{(}\|f(t)\|_{L^2_{k_{0}-2}(\mathbb{T}\times\mathbb{R})}+\|f_{v}(t)\|_{L^2_{k_{0}-1}(\mathbb{T}\times\mathbb{R})}+t^{\frac{1}{2}}\|f_{vv}(t)\|_{L^2_{k_{0}-2}(\mathbb{T}\times\mathbb{R})}\Big{)} \nonumber\\
&\quad\quad\times\Big{(} \eta t\|f_{x}(t)\|_{L^2_{k_{0}-2}(\mathbb{T}\times\mathbb{R})}+\eta^{\frac{3}{2}}t^{\frac{3}{2}} \|f_{vx}(t)\|_{L^2_{k_{0}-2}(\mathbb{T}\times\mathbb{R})}\Big{)}.   \label{Lppp}
\end{align}
It is easy to verify
\begin{equation}\label{Lsim}
\left\{
\begin{split}
&L^{\eta}(t)\geq (1-\frac{1}{2}\eta^{\frac{1}{2}})\Big{(}t\|f_{v}(t)\|_{L^2_{k_{0}-2}(\mathbb{T}\times\mathbb{R})}^2+\eta^3t^3\|f_{x}(t)\|_{L^2_{k_{0}-2}(\mathbb{T}\times\mathbb{R})}^2\Big{)},\\
&D^{\eta}(t)\geq \min\{1,\rho_{T}\}\Big{(} t\|f_{vv}(t)\|_{L^{2}_{k_{0}-2}(\mathbb{T}\times\mathbb{R})}^2+\eta^3t^3\|f_{vx}(t)\|_{L^{2}_{k_{0}-2}(\mathbb{T}\times\mathbb{R})}^2+\eta^{2}t^2\|f_{x}(t)\|_{L^{2}_{k_{0}-2}(\mathbb{T}\times\mathbb{R})}^2\Big{)},
\end{split}
\right.
\end{equation}
where the constant $\rho_{T}>0$ is given by $(\ref{N2time})_{1}$. Then for any $\eta\in(0,1)$, one has by (\ref{Lppp})-$(\ref{Lsim})$ that
 \begin{align}
&\frac{d}{dt}L^{\eta}(t)+D^{\eta}(t)\nonumber\\
&\leq C_{T}\|f_{v}(t)\|_{L^2_{k_{0}-2}(\mathbb{T}\times\mathbb{R})}^2+\frac{C}{\eta}\|f_{v}(t)\|_{L^2_{k_{0}-2}(\mathbb{T}\times\mathbb{R})} \big{(}D^{\eta}(t) \big{)}^{\frac{1}{2}}+C_{T}\eta D^{\eta}(t)\nonumber\\
&\quad+C_{T}\eta^{\frac{3}{4}}\Big{(}\|(f,f_{v})(t)\|_{L^2_{k_{0}-1}(\mathbb{T}\times\mathbb{R})}+\|(f,f_{v})(t)\|_{L^2_{k_{0}}(\mathbb{T}\times\mathbb{R})}^{\frac{1}{2}}\big{(}D^{\eta}(t) \big{)}^{\frac{1}{4}}\Big{)} \big{(}D^{\eta}(t) \big{)}^{\frac{1}{2}}\nonumber\\
&\quad+C_{T}\eta^{\frac{1}{2}}\Big{(}\|(f,f_{v})(t)\|_{L^{2}_{k_{0}-2}(\mathbb{T}\times\mathbb{R})}+\big{(}D^{\eta}(t) \big{)}^{\frac{1}{2}}\Big{)}\big{(}D^{\eta}(t) \big{)}^{\frac{1}{2}}\nonumber\\
&\leq C_{T} \eta^{\frac{1}{2}}D^{\eta}(t)+\frac{C_{T}}{\eta^{\frac{5}{2}}}\|(f,f_{v})(t)\|_{L^{2}_{k_{0}}(\mathbb{T}\times\mathbb{R})}^2.\label{Lppp1}
\end{align}
Since it follows $L^{\eta}(0)=0$, we choose $
\eta=\min\Big{\{}1,\frac{1}{4C_{T}^2}\Big{\}}$ and make use of $(\ref{N2time})$ and (\ref{Lppp})-(\ref{Lppp1}) to get
\begin{align}
&\sup_{t\in[0,T]}\Big{(}t\|f_{v}(t)\|_{L^2_{k_{0}-2}(\mathbb{T}\times\mathbb{R})}^2+t^3\|f_{x}(t)\|_{L^2_{k_{0}-2}(\mathbb{T}\times\mathbb{R})}^2\Big{)}\nonumber\\
&\quad+\int_{0}^{T}\Big{(}t\|f_{vv}(t)\|_{L^2_{k_{0}-2}(\mathbb{T}\times\mathbb{R})}^2+t^3\|f_{vx}(t)\|_{L^2_{k_{0}-2}(\mathbb{T}\times\mathbb{R})}^2+t^2\|f_{x}(t)\|_{L^{2}_{k_{0}-2}(\mathbb{T}\times\mathbb{R})}^2\Big{)}dt\leq C_{T}.\label{N2timew21}
\end{align}

Finally, differentiating $(\ref{fv})$ thrice with respect to $v$ and multiplying the resulting equation by $t^3$, we have
$$
(t^3f_{vvv})_{t}+v(t^3f_{vvv})_{x}+(\rho(u-v)t^3f_{vvv}-\rho(t^3f_{vvv})_{v})_{v}=(3t^2f_{vv}+3t^3\rho f_{vv}+3t^3f_{xv})_{v}.
$$
Similarly to (\ref{fv})-(\ref{fx1}), one can show
\begin{align}
&\sup_{t\in[0,T]}t^3\|f_{vvv}(t)\|_{L^2_{k_{0}-2}(\mathbb{T}\times\mathbb{R})}^2\nonumber\\
&\leq C_{T}\int_{0}^{T}\|(3t^2f_{vv}+3t^3\rho f_{vv}+3t^3f_{xv})(t)\|_{L^{2}_{k_{0}-2}(\mathbb{T}\times\mathbb{R})}^2dt\nonumber\\
&\leq C_{T}\int_{0}^{T}\Big{(}t^2\|f_{vv}(t)\|_{L^{2}_{k_{0}-2}(\mathbb{T}\times\mathbb{R})}^2+t^3\|f_{xv}(t)\|_{L^{2}_{k_{0}-2}(\mathbb{T}\times\mathbb{R})}^2\Big{)}dt\leq C_{T}.\label{N2timew22}
\end{align}
By the Gagliardo-Nirenberg inequality, we also have
\begin{align}
&\sup_{t\in[0,T]}t\|f_{vv}(t)\|_{L^2_{k_{0}-2}(\mathbb{T}\times\mathbb{R})}\leq C\sup_{t\in[0,T]}\Big{(}t^{\frac{1}{2}}\|f_{v}(t)\|_{L^2_{k_{0}-2}(\mathbb{T}\times\mathbb{R})}\Big{)}^{\frac{1}{2}}\Big{(}t^{\frac{3}{2}}\|f_{vvv}(t)\|_{L^2_{k_{0}-2}(\mathbb{T}\times\mathbb{R})}\Big{)}^{\frac{1}{2}}\leq C_{T}.\label{fvvw}
\end{align}
The combination of (\ref{N2timew21})-(\ref{fvvw}) leads to (\ref{N2timew2}). The proof of Lemma \ref{lemma26} is completed.
\end{proof}

\section{Uniquenss}

\begin{prop}\label{prop31}
Let $T>0$, and $(\rho_{i},u_{i},f_{i})$, $i=1,2,$ be two weak solutions to the IVP $(\ref{m1})$-$(\ref{kappa})$ on $[0,T]$ in the sense of Definition \ref{defn11} satisfying $(\ref{r0})$, $(\ref{r1})$, and $(\ref{r2})$. Then it holds $(\rho_{1},u_{1},f_{1})=(\rho_{2},u_{2},f_{2})$ a.e. in $(x,v,t)\in\mathbb{T}\times\mathbb{R}\times[0,T]$.
\end{prop}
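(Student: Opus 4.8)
The plan is to run a relative-energy/Grönwall argument on the differences $\varrho:=\rho_1-\rho_2$, $\mathfrak u:=u_1-u_2$, $\mathfrak f:=f_1-f_2$, which all vanish at $t=0$ since the two solutions share the data $(\rho_0,m_0,f_0)$ (in particular $\rho_0\geq\rho_->0$ forces $u_1(\cdot,0)=u_2(\cdot,0)=m_0/\rho_0$). Throughout I would use only the uniform-in-time part of $(\ref{r2})$: $\rho_i\in L^\infty_tH^1_x$ with $\rho_i\geq\rho_-$, $u_i\in L^\infty_tH^1_x\cap L^2_tH^2_x$, $\partial_tu_i\in L^2_tL^2_x$, $f_i\in L^\infty_tL^2_{k_0}\cap L^\infty$ and $\partial_vf_i\in L^2_tL^2_{k_0}$ (plus the moment bounds of Lemma \ref{lemma23}). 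Subtracting the two systems: $\varrho$ solves the linear transport equation $\varrho_t+(\varrho u_1)_x+(\rho_2\mathfrak u)_x=0$; writing the momentum equations in nonconservative form, $\mathfrak u$ solves $\rho_1(\mathfrak u_t+u_1\mathfrak u_x)+\rho_1\mathfrak u u_{2x}+\varrho(u_{2t}+u_2u_{2x})+(\rho_1^\gamma-\rho_2^\gamma)_x=(\mu(\rho_1)\mathfrak u_x)_x+((\mu(\rho_1)-\mu(\rho_2))u_{2x})_x-[\rho_1(u_1n_1-n_1w_1)-\rho_2(u_2n_2-n_2w_2)]$; and $\mathfrak f$ solves the linear Vlasov--Fokker--Planck equation $\mathfrak f_t+v\mathfrak f_x+(\rho_1(u_1-v)\mathfrak f)_v-\rho_1\mathfrak f_{vv}=\rho_1\mathfrak f+S$ with source $S=-\rho_1\mathfrak u f_{2v}-\varrho(u_2-v)f_{2v}+\varrho f_2+\varrho f_{2vv}$ gathering all terms carrying $\varrho$ or $\mathfrak u$. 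From mass conservation $(\ref{massmomentum})_1$, $\int_{\mathbb T}\varrho\,dx=0$, so the $H^1$-bounds give $\|\varrho\|_{L^\infty}\leq C\|\varrho\|_{L^2}^{1/2}$; from momentum conservation $(\ref{massmomentum})_3$, $\int_{\mathbb T}\rho_1\mathfrak u\,dx$ is controlled by $\|\mathfrak f\|_{L^2_\ell}$ and $\|\varrho\|_{L^2}$, which, since $\rho_1\geq\rho_-$, yields the Poincaré-type bound $\|\mathfrak u\|_{L^\infty}\leq C(\|\mathfrak u_x\|_{L^2}+\|\mathfrak f\|_{L^2_\ell}+\|\varrho\|_{L^2})$.

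Next I would derive three coupled weighted energy inequalities. Testing the $\varrho$-equation by $\varrho$ gives $\tfrac{d}{dt}\|\varrho\|_{L^2}^2\leq C(1+\|u_{1x}\|_{L^\infty})\|\varrho\|_{L^2}^2+C\|\varrho\|_{L^2}(\|\rho_{2x}\|_{L^2}\|\mathfrak u\|_{L^\infty}+\|\mathfrak u_x\|_{L^2})$, with $\|u_{1x}\|_{L^\infty}^2\in L^1_t$ from $u_1\in L^2_tH^2_x$. Testing the $\mathfrak u$-equation by $\mathfrak u$ produces the dissipation $\int\mu(\rho_1)|\mathfrak u_x|^2\geq\mu_0\|\mathfrak u_x\|_{L^2}^2$; the pressure, inertia and $\varrho(u_{2t}+u_2u_{2x})$ remainders are bounded by $C\|\varrho\|_{L^2}\|\mathfrak u_x\|_{L^2}$ together with $\tfrac{\mu_0}{8}\|\mathfrak u_x\|_{L^2}^2+B(t)(\|\varrho\|_{L^2}^2+\|\mathfrak u\|_{L^2}^2)$ for some $B\in L^1_t$; the density-dependent viscosity remainder is $-\int(\mu(\rho_1)-\mu(\rho_2))u_{2x}\mathfrak u_x\leq C\|u_{2x}\|_{L^\infty}\|\varrho\|_{L^2}\|\mathfrak u_x\|_{L^2}\leq\tfrac{\mu_0}{8}\|\mathfrak u_x\|_{L^2}^2+C\|u_{2x}\|_{L^\infty}^2\|\varrho\|_{L^2}^2$ with $\|u_{2x}\|_{L^\infty}^2\in L^1_t$; and the drag difference, after writing $\int\rho(u-v)f\,dv=\rho un-\rho nw$, contributes the favourable $-\int\rho_1n_1|\mathfrak u|^2\leq0$ plus terms estimated through $\|n_i\|_{L^4},\|n_iw_i\|_{L^2}$ (Lemma \ref{lemma23}) and $\|n_1-n_2\|_{L^2}+\|n_1w_1-n_2w_2\|_{L^2}\leq C\|\mathfrak f\|_{L^2_\ell}$ (valid for $\ell>\tfrac32$, allowed since $k_0>\tfrac72$). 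Testing the $\mathfrak f$-equation by $\langle v\rangle^{2\ell}\mathfrak f$ with $\tfrac32<\ell\leq k_0-2$ gives the degenerate dissipation $\int\rho_1\langle v\rangle^{2\ell}|\mathfrak f_v|^2\geq\rho_-\|\mathfrak f_v\|_{L^2_\ell}^2$; in every source term one integrates by parts in $v$ so that $v$-derivatives fall onto $\langle v\rangle^{2\ell}$ or onto $\mathfrak f$ (whose $\mathfrak f_v$ is absorbed by the dissipation), leaving $f_2$-factors that are at worst $f_2$ or $f_{2v}$, controlled by $f_2\in L^\infty_tL^2_{k_0}$ and $f_{2v}\in L^2_tL^2_{k_0}$, while the $\varrho$- and $\mathfrak u$-factors are pulled out in $L^\infty_x$ and handled by the two bounds above.

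To conclude I would set $E(t):=\|\varrho(t)\|_{L^2}^2+\|\mathfrak u(t)\|_{L^2}^2+\delta\|\mathfrak f(t)\|_{L^2_\ell}^2$ for a small fixed $\delta>0$ — small enough that the $\|\mathfrak u_x\|_{L^2}^2$ and $\|\mathfrak f_v\|_{L^2_\ell}^2$ contributions thrown off by the $\delta$-weighted $\mathfrak f$-estimate are reabsorbed into the viscous and degenerate-parabolic dissipations — and add the three inequalities. The target outcome is the integral inequality $E(t)\leq\int_0^tA(s)E(s)\,ds$ with $E(0)=0$ and $A\in L^1(0,T)$, the $L^1_t$ integrability of $A$ coming precisely from $\|u_{ix}\|_{L^\infty}^2,\|u_{it}\|_{L^2}^2,\|f_{iv}\|_{L^2_{k_0}}^2\in L^1_t$; Grönwall then forces $E\equiv0$ on $[0,T]$, i.e. $(\rho_1,u_1,f_1)=(\rho_2,u_2,f_2)$ a.e.

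I expect the real difficulty to sit in the particle estimate, and within it in the single cross term $\varrho\,f_{2vv}$ in $S$: after one integration by parts it becomes $-\int\varrho f_{2v}\langle v\rangle^{2\ell}\mathfrak f_v\,dv\,dx$, which pits the density difference — for which there is no dissipative control, only $\|\varrho\|_{L^2}$ and $\|\varrho\|_{L^\infty}\lesssim\|\varrho\|_{L^2}^{1/2}$ — against $f_{2v}$, whose weighted norm is merely $L^2$ in time and whose higher $v$- and $x$-regularity degenerates as $t\to0^+$ (the time weights in $(\ref{r2})_{4}$), and against the weak, $v$-only dissipation $\int\rho_1\langle v\rangle^{2\ell}|\mathfrak f_v|^2$. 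Turning this term into part of $A(t)E$ rather than an uncontrolled forcing is the technical heart of the proof; it is what forces $k_0>\tfrac72$ (to afford the velocity-weight losses in the $v$-integrations by parts) and the careful tracking of the time-singular weights, and it is presumably where the hypoelliptic structure of $(\ref{m1})_3$ (Remark \ref{remark11}) has to be brought to bear. The density-dependent viscosity cross term $(\mu(\rho_1)-\mu(\rho_2))u_{2x}$ in the $\mathfrak u$-estimate is a second, much milder source of trouble, disposed of by the $L^2_tW^{1,\infty}_x$ regularity of $u_2$; the rest is bookkeeping once $\ell$ and $\delta$ are fixed.
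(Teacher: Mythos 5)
Your overall architecture (three coupled difference estimates, closed by Gr\"onwall, driven by the $L^2_tH^2_x$, $L^2_tL^2_x$ and time-weighted bounds of $(\ref{r2})$) is the same as the paper's, but the plan has a genuine gap exactly at the point you flag as the heart of the matter, and the symmetric-energy framework you set up cannot absorb it. With $E(t)=\|\varrho\|_{L^2}^2+\|\mathfrak u\|_{L^2}^2+\delta\|\mathfrak f\|_{L^2_\ell}^2$, the term $\int\varrho f_{2vv}\langle v\rangle^{2\ell}\mathfrak f$ (or, after your integration by parts, $\int\varrho f_{2v}\langle v\rangle^{2\ell}\mathfrak f_v$) can only be closed by paying either $\|\varrho\|_{L^\infty}\lesssim\|\varrho\|_{L^2}^{1/2}\sim E^{1/4}$ or $\sup_x\|f_{2v}(x,\cdot,t)\|_{L^2_v}$, which costs $f_{2vx}$ and is only available with the time-singular weights of $(\ref{r2})_4$. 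In the first case you end up with a forcing of the type $\|f_{2vv}(t)\|_{L^2_{k_0-2}}E^{3/4}$ or $\|f_{2v}(t)\|^2_{L^2_{k_0}}E^{1/2}$: the powers of $E$ are non-Lipschitz, so $E(0)=0$ together with a Gr\"onwall/Osgood argument does \emph{not} force $E\equiv0$ when the coefficient is merely integrable --- and for the $f_{2vv}$ variant the coefficient is not even in $L^1_t$, since $(\ref{r2})_4$ only gives $t\|f_{2vv}(t)\|_{L^2_{k_0-2}}\leq C_T$, i.e. an $O(t^{-1})$ bound. So the announced target $E(t)\leq\int_0^tA(s)E(s)\,ds$ with $A\in L^1$ is not reached by the estimates you describe, and the degenerate dissipation $\rho_-\|\mathfrak f_v\|^2_{L^2_\ell}$ cannot help, because (as you yourself note) the obstruction is the density difference, for which there is no dissipation. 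Hypoellipticity is not the missing tool either: it has already been spent in producing $(\ref{r2})_4$, and the uniqueness proof only uses those weighted bounds as given.

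The paper closes this term by breaking the symmetry of the energy and exploiting a time gain on the density difference. First, from the transport equation alone (zero initial difference, $L^2_t$ control of $\mathfrak u_x$) it proves $\|(\rho_1-\rho_2)(t)\|_{L^2}\leq C_T\,t^{1/2}\big(\sup_{\tau\le t}\|\sqrt{\rho_1}(u_1-u_2)(\tau)\|_{L^2}+\|(u_1-u_2)_x\|_{L^2(0,t;L^2)}\big)$, i.e. inequality $(\ref{widetilderho1})$. Second, it measures $f_1-f_2$ in the \emph{linear} norm $L^1_1(\mathbb{T}\times\mathbb{R})$, justified for weak solutions by the renormalization Lemma \ref{lemma610} with $b^{\delta}(s)=(s^2+\delta^2)^{1/2}$; there the dangerous contribution is $\int_0^t\|(\rho_1-\rho_2)f_{2vv}(\tau)\|_{L^1_1}d\tau\leq\big(\sup_{\tau}\tau^{-1/2}\|(\rho_1-\rho_2)(\tau)\|_{L^2}\big)\big(\sup_{\tau}\tau\|f_{2vv}(\tau)\|_{L^2_{k_0-2}}\big)\int_0^t\tau^{-1/2}d\tau$, where the $t^{1/2}$ gain exactly cancels the $t^{-1}$ singularity and leaves an integrable $\tau^{-1/2}$, with the result \emph{linear} in the velocity-difference quantities. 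Finally this $L^1_1$ bound and $(\ref{widetilderho1})$ are substituted into the $u$-difference energy inequality, and Gr\"onwall is applied only to $\sup_\tau\|\sqrt{\rho_1}(u_1-u_2)\|_{L^2}^2+\|(u_1-u_2)_x\|_{L^2_tL^2_x}^2$, where all coefficients are genuinely $L^1$ in time. Some device of this kind --- the asymmetric, time-weighted pairing of the $t^{1/2}$-vanishing of $\|\rho_1-\rho_2\|_{L^2}$ against the $t^{-1}$ blow-up of $\|f_{2vv}\|$, in a norm of $f_1-f_2$ that keeps the source linear --- is the ingredient your proposal is missing; without it the scheme as written does not close.
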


\begin{proof}
We first estimate $(\rho_{1}-\rho_{2})$. It is easy to verify
\begin{equation}
\begin{split}
&(\rho_{1}-\rho_{2})_{t}+((\rho_{1}-\rho_{2})u_{1})_{x}=-\rho_{2x}(u_{1}-u_{2})-\rho_{2}(u_{1}-u_{2})_{x}.\label{widetilderho}
\end{split}
\end{equation}
Multiplying $(\ref{widetilderho})$ by $(\rho_{1}-\rho_{2})$ and integrating the resulting equation by parts over $\mathbb{T}$, we have
\begin{align}
&\frac{1}{2}\frac{d}{dt}\|(\rho_{1}-\rho_{2})(t)\|_{L^2(\mathbb{T})}^2\nonumber\\
&\quad\leq C_{T}\big{(}\|(u_{1}-u_{2})_{x}(t)\|_{L^2(\mathbb{T})}+\|\big{(}\sqrt{\rho_{1}}(u_{1}-u_{2})\big{)}(t)\|_{L^2(\mathbb{T})}\big{)}\|(\rho_{1}-\rho_{2})(t)\|_{L^2(\mathbb{T})}\nonumber\\
&\quad\quad+C_{T}\|u_{2x}(t)\|_{H^1(\mathbb{T})}\|(\rho_{1}-\rho_{2})(t)\|_{L^2(\mathbb{T})}^2,\label{3111}
\end{align}
where one has used (\ref{r2}) and the fact
\begin{equation}\label{uinfty11}
\begin{split}
&\|(u_{1}-u_{2})(t)\|_{L^{\infty}(\mathbb{T})}\leq \|(u_{1}-u_{2})_{x}(t)\|_{L^2(\mathbb{T})}+\frac{\|\big{(}\sqrt{\rho_{1}}(u_{1}-u_{2})\big{)}(t)\|_{L^2(\mathbb{T})}}{\|\rho_{0}\|_{L^1(\mathbb{T})}^{\frac{1}{2}}},\quad\forall t\in[0,T].
\end{split}
\end{equation}
By (\ref{3111}), we obtain
\begin{equation}\nonumber
\begin{split}
&\frac{d}{dt}\|(\rho_{1}-\rho_{2})(t)\|_{L^2(\mathbb{T})}\\\
&\leq C_{T}\Big{(}\|(u_{1}-u_{2})_{x}(t)\|_{L^2(\mathbb{T})}+\|\big{(}\sqrt{\rho_{1}}(u_{1}-u_{2})\big{)}(t)\|_{L^2(\mathbb{T})}\Big{)}+\|u_{2}(t)\|_{H^2(\mathbb{T})}\|(\rho_{1}-\rho_{2})(t)\|_{L^2(\mathbb{T})},
\end{split}
\end{equation}
which together with the Gr${\rm{\ddot{o}}}$nwall inequality leads to
\begin{align}
&\|(\rho_{1}-\rho_{2})(t)\|_{L^2(\mathbb{T})}\leq C_{T}t^{\frac{1}{2}}\Big{(}\sup_{\tau\in [0,t]}\|\big{(}\sqrt{\rho_{1}}(u_{1}-u_{2})\big{)}(\tau)\|_{L^2(\mathbb{T})}+\|(u_{1}-u_{2})_{x}\|_{L^2(0,t;L^2(\mathbb{T}))}\Big{)}.\label{widetilderho1}
\end{align}

To estimate $(u_{1}-u_{2})$, noticing that it holds
\begin{align}
&\rho _{1}(u_{1}-u_{2})_{t}+\rho_{1}u_{1}(u_{1}-u_{2})_{x}-[\mu(\rho_{1})(u_{1}-u_{2})_{x}]_{x}+\rho _{1}\int_{\mathbb{R}} f_{1}dv(u_{1}-u_{2}) \nonumber\\
&\quad=-(\rho_{1}^{\gamma}-\rho_{2}^{\gamma})_{x}-(\rho_{1}-\rho_{2})(u_{2t}+u_{2}u_{2x})-\rho _{1}(u_{1}-u_{2})u_{2x}+[(\mu(\rho_{1})-\mu(\rho_{2}))u_{2x}]_{x} \nonumber\\
&\quad\quad+\int_{\mathbb{R}}(\rho _{1}-\rho _{2})(v-u_{2})f_{1}dv+\int_{\mathbb{R}}\rho _{2}(v-u_{2})(f_{1}-f_{2})dv,\label{widetildeu}
\end{align}
we multiply $(\ref{widetildeu})$ by $(u_{1}-u_{2})$ and integrate the resulting equation by parts over $\mathbb{T}\times[0,t]$ to derive
\begin{align}
&\frac{1}{2}\|\big{(}\sqrt{\rho _{1}}(u_{1}-u_{2})\big{)}(t)\|_{L^2(\mathbb{T})}^2+\int_{0}^{t}\|\big{(}\sqrt{\mu(\rho_{1})}(u_{1}-u_{2})_{x}\big{)}(\tau)\|_{L^2(\mathbb{T})}^2d\tau\nonumber\\
&\quad\quad+\int_{0}^{t}\|\big{(}\sqrt{\rho _{1}f_{1}}(u_{1}-u_{2})\big{)}(\tau)\|_{L^2(\mathbb{T}\times\mathbb{R})}^2d\tau=\sum_{i=1}^{4} I^3_{i},\label{uwidetildepp}
\end{align}
where $I^3_{i}, i=1,...,4,$ are given by
\begin{equation}\nonumber
\begin{split}
&I_{1}^{3}:=\int_{0}^{t}\int_{\mathbb{T}}\big{[}(\rho_{1}^{\gamma}-\rho_{2}^{\gamma})(u_{1}-u_{2})_{x}\big{]}(x,\tau)dxd\tau,\\
&I_{2}^{3}:=-\int_{0}^{t}\int_{\mathbb{T}}\big{[}(\rho_{1}-\rho_{2})(u_{2t}+u_{2}u_{2x})(u_{1}-u_{2})+\rho_{1} u_{2x}|u_{1}-u_{2}|^2\big{]}(x,\tau)dxd\tau,\\
&I_{3}^{3}:=\int_{0}^{t}\int_{\mathbb{T}}\big{[}-(\mu(\rho_{1})-\mu(\rho_{2}))u_{2x}(u_{1}-u_{2})_{x}+(\rho_{1}-\rho_{2})(u_{1}-u_{2})(v-u_{2})f_{1}\big{]}(x,\tau)dxd\tau,\\
&I_{4}^{3}:=\int_{0}^{t}\int_{\mathbb{T}\times\mathbb{R}}\big{[}\rho_{2}(v-u_{2})(f_{1}-f_{2})(u_{1}-u_{2})\big{]}(x,v,\tau)dxdvd\tau.
\end{split}
\end{equation}
By $(\ref{r1})$ and the Young inequality,  one has
\begin{equation}\nonumber
\begin{split}
&I_{1}^{3}\leq C\int_{0}^{t}\|(\rho_{1}-\rho_{2})(\tau)\|_{L^2(\mathbb{T})}^2d\tau+\frac{1}{100}\int_{0}^{t}\|(u_{1}-u_{2})_{x}(\tau)\|_{L^2(\mathbb{T})}^2d\tau.
\end{split}
\end{equation}
For the term $I_{2}^3$, it follows from $(\ref{r2})$ and (\ref{uinfty11}) that
\begin{equation}\nonumber
\begin{split}
&I_{2}^{3}\leq C_{T}\int_{0}^{t}\Big{(}1+\|u_{2t}(\tau)\|_{L^2(\mathbb{T})}^2+\|u_{2}(\tau)\|_{H^2(\mathbb{T})}\Big{)}\Big{(}\|\big{(}\sqrt{\rho_{1}}(u_{1}-u_{2})\big{)}(\tau)\|_{L^2(\mathbb{T})}^2\|(\rho_{1}-\rho_{2})(\tau)\|_{L^2(\mathbb{T})}^2\Big{)}d\tau\\
&\quad\quad+\frac{1}{100}\int_{0}^{t}\|(u_{1}-u_{2})_{x}(\tau)\|_{L^2(\mathbb{T})}^2d\tau.
\end{split}
\end{equation}
Due to $(\ref{r1})$, (\ref{r2}), and (\ref{uinfty11}), it also holds
\begin{equation}\nonumber
\begin{split}
&I_{3}^{3}\leq C_{T}\int_{0}^{t}\big{(}1+\|u_{2}(\tau)\|_{H^2(\mathbb{T})}\big{)}\Big{(}\|\big{(}\sqrt{\rho_{1}}(u_{1}-u_{2})\big{)}(\tau)\|_{L^2(\mathbb{T})}^2+\|(\rho_{1}-\rho_{2})(\tau)\|_{L^2(\mathbb{T})}^2\Big{)}d\tau\\
&\quad\quad+\frac{1}{100}\int_{0}^{t}\|(u_{1}-u_{2})_{x}(\tau)\|_{L^2(\mathbb{T})}^2d\tau.
\end{split}
\end{equation}
Finally, the term $I_{4}^{3}$ can be estimated as follows:
\begin{equation}\nonumber
\begin{split}
&I_{4}^{3}\leq C \int_{0}^{t}\big{(}1+\|u_{2}(\tau)\|_{L^{\infty}(\mathbb{T})}\big{)}\|(u_{1}-u_{2})(\tau)\|_{L^{\infty}(\mathbb{T})}\|(f_{1}-f_{2})(\tau)\|_{L^1_{1}(\mathbb{T}\times\mathbb{R})}d\tau\\
&\quad\leq C_{T}\int_{0}^{t}\Big{(}\|\big{(}\sqrt{\rho_{1}}(u_{1}-u_{2})\big{)}(\tau)\|_{L^2(\mathbb{T})}^2d\tau+\|(f_{1}-f_{2})(\tau)\|_{L^1_{1}(\mathbb{T}\times\mathbb{R})}^2\Big{)}d\tau\\
&\quad\quad+\frac{1}{100}\int_{0}^{t}\|(u_{1}-u_{2})_{x}(\tau)\|_{L^2(\mathbb{T})}^2d\tau.
\end{split}
\end{equation}
Substituting the above estimates of $I_{i}^{3}$ $(i=1,2,3,4)$ into (\ref{uwidetildepp}), we get
\begin{align}
&\sup_{\tau\in [0,t]}\|\big{(}\sqrt{\rho _{1}}(u_{1}-u_{2})\big{)}(\tau)\|_{L^2(\mathbb{T})}^2+\int_{0}^{t}\|(u_{1}-u_{2})_{x}(\tau)\|_{L^2(\mathbb{T})}^2d\tau \nonumber\\
&\quad\leq \int_{0}^{t}\Big{(}1+\|u_{2t}(\tau)\|_{L^2(\mathbb{T})}^2+\|u_{2}(\tau)\|_{H^2(\mathbb{T})}\Big{)}\Big{(}\|\big{(}\sqrt{\rho_{1}}(u_{1}-u_{2})\big{)}(\tau)\|_{L^2(\mathbb{T})}^2+\|(\rho_{1}-\rho_{2})(\tau)\|_{L^2(\mathbb{T})}^2\Big{)}d\tau\nonumber\\
&\quad\quad+\int_{0}^{t}\|(f_{1}-f_{2})(\tau)\|_{L^1_{1}(\mathbb{T}\times\mathbb{R})}^2d\tau.\label{widetildeu1}
\end{align}

We are ready to estimate $(f_{1}-f_{2})$. It can be verified by $(\ref{m1})_{3}$ that
\begin{equation}\label{widetildef}
\begin{split}
&(f_{1}-f_{2})_{t}+v(f_{1}-f_{2})_{x}+[\rho_{1}(u_{1}-v)(f_{1}-f_{2})-\rho_{1}(f_{1}-f_{2})_{v}]_{v}\\
&\quad=-\rho_{1}(u_{1}-u_{2})f_{2v}-(\rho_{1}-\rho_{2})(u_{2}-v)f_{2v}+(\rho_{1}-\rho_{2})f_{2}+(\rho_{1}-\rho_{2})f_{2vv}.
\end{split}
\end{equation}
Employing Lemma \ref{lemma610} below to the equation (\ref{widetildef}) for $b(s)=b^{\delta}(s)=(s^2+\delta^2)^{\frac{1}{2}}$ satisfying $ |(b^{\delta}(s))'|=|s(s^2+\delta^2)^{-\frac{1}{2}}|\leq 1$ and $(b^{\delta}(s))''=\delta^2(s^2+\delta^2)^{-\frac{3}{2}}\geq 0$ for any $\delta>0$, we obtain
\begin{equation}\label{leqf1f2}
\begin{split}
&(b^{\delta}(f_{1}-f_{2}))_{t}+v(b^{\delta}(f_{1}-f_{2}))_{x}+[\rho_{1}(u_{1}-v)b_{\delta}(f_{1}-f_{2})]_{v}-\rho[b^{\delta}(f_{1}-f_{2})]_{vv} \\
&\quad\leq |-\rho_{1}(u_{1}-u_{2})f_{2v}-(\rho_{1}-\rho_{2})(u_{2}-v)f_{2v}+(\rho_{1}-\rho_{2})f_{2} +(\rho_{1}-\rho_{2})f_{2vv}|.
\end{split}
\end{equation}
Since it follows $b^{\delta}(s)\rightarrow |s|$ as $\delta\rightarrow 0$, we choose the text function $\psi^{\delta}(x,v,t)\in \mathcal{D}(\mathbb{T}\times\mathbb{R}\times(0,T))$ in (\ref{leqf1f2}) satisfying $\psi^{\delta}(x,v,t)\rightarrow \langle v\rangle$ as $\delta\rightarrow 0$ and employ the dominated convergence theorem to have
\begin{align}
&\|(f_{1}-f_{2})(t)\|_{L^1_{1}(\mathbb{T}\times\mathbb{R})}\leq \sum_{i=1}^{3}I^4_{i},\label{remardf}
\end{align}
where $I^4_{i}, i=1,2,3,$ are given by
\begin{equation}\nonumber
\begin{split}
&I_{1}^{4}:=\int_{0}^{t}\|\big{(}\rho_{1}(u_{1}-v)(f_{1}-f_{2})\big{)}(\tau)\|_{L^1(\mathbb{T}\times\mathbb{R})}d\tau,\\
&I_{2}^{4}:=\int_{0}^{t}\|\big{(}\rho_{1}(u_{1}-u_{2})f_{2v}+(\rho_{1}-\rho_{2})(u_{2}-v)f_{2v}-(\rho_{1}-\rho_{2})f_{2} \big{)}(\tau)\|_{L^1_{1}(\mathbb{T}\times\mathbb{R})}d\tau,\\
&I_{3}^{4}:=\int_{0}^{t}\|\big{(}(\rho_{1}-\rho_{2})f_{2vv}\big{)}(\tau)\|_{L^1_{1}(\mathbb{T}\times\mathbb{R})}d\tau.
\end{split}
\end{equation}
It follows from $(\ref{r1})_{1}$ that
\begin{equation}\nonumber
\begin{split}
&I_{1}^{4}\leq \rho_{+}\int_{0}^{t}\big{(}\|u_{1}(\tau)\|_{L^{\infty}(\mathbb{T})}+1\big{)}\|(f_{1}-f_{2})(\tau)\|_{L^1_{1}(\mathbb{T}\times\mathbb{R})}d\tau.
\end{split}
\end{equation}
Due to $(\ref{r2})$, (\ref{widetilderho1}) and the fact $k_{0}-2>\frac{1}{2}$, we have
\begin{equation}\nonumber
\begin{split}
&I_{2}^{4}\leq \rho_{+}^{\frac{1}{2}}\int_{0}^{t}\|\big{(}\sqrt{\rho_{1}}(u_{1}-u_{2})\big{)}(\tau)\|_{L^2(\mathbb{T})}\Big{(}\int_{\mathbb{R}}\frac{1}{\langle v\rangle^{2(k_{0}-1)}}dv\Big{)}^{\frac{1}{2}}\|f_{2v}(\tau)\|_{L^2_{k_{0}}(\mathbb{T}\times\mathbb{R})}d\tau\\
&\quad\quad+\int_{0}^{t}\|(\rho_{1}-\rho_{2})(\tau)\|_{L^2(\mathbb{T})}\big{(}1+\|u_{2}(\tau)\|_{L^{\infty}(\mathbb{T})}\big{)}\Big{(}\int_{\mathbb{R}}\frac{1}{\langle v\rangle^{2(k_{0}-2)}}dv\Big{)}^{\frac{1}{2}}\|f_{2v}(\tau)\|_{L^2_{k_{0}}(\mathbb{T}\times\mathbb{R})}d\tau\\
&\quad\quad+\int_{0}^{t}\|(\rho_{1}-\rho_{2})(\tau)\|_{L^2(\mathbb{T})}\Big{(}\int_{\mathbb{R}}\frac{1}{\langle v\rangle^{2(k_{0}-1)}}dv\Big{)}^{\frac{1}{2}}\|f_{2}(\tau)\|_{L^2_{k_{0}}(\mathbb{T}\times\mathbb{R})}d\tau\\
&\quad\leq C_{T}\sup_{\tau\in [0,t]}\|\big{(}\sqrt{\rho_{1}}(u_{1}-u_{2})\big{)}(\tau)\|_{L^2(\mathbb{T})}+C_{T}\|(u_{1}-u_{2})_{x}\|_{L^2(0,t;L^2(\mathbb{T}))}.
\end{split}
\end{equation}
For the key term $I_{4}^{4}$, we make use of $(\ref{r2})_{5}$, (\ref{widetilderho1}) and the fact $k_{0}-3>\frac{1}{2}$ to obtain
\begin{equation}\nonumber
\begin{split}
&I_{4}^{4}\leq\int_{0}^{t}\|(\rho_{1}-\rho_{2})(\tau)\|_{L^2(\mathbb{T})}\Big{(}\int_{\mathbb{R}}\frac{1}{\langle v\rangle^{2(k_{0}-3)}}dv\Big{)}^{\frac{1}{2}}\|f_{2vv}(\tau)\|_{L^2_{k_{0}-2}(\mathbb{T}\times\mathbb{R})}d\tau\\
&\quad\leq \Big{(}\sup_{\tau\in [0,t]}\frac{1}{\tau^{\frac{1}{2}}}\|(\rho_{1}-\rho_{2})(\tau)\|_{L^2(\mathbb{T})}\Big{)}\Big{(}\sup_{\tau\in [0,t]} \tau \|f_{2vv}(\tau)\|_{L^2_{k_{0}-2}(\mathbb{T}\times\mathbb{R})}\Big{)}\int_{0}^{t}\frac{1}{\tau^{\frac{1}{2}}}d\tau\\
&\quad\leq C_{T}\sup_{\tau\in [0,t]}\|\big{(}\sqrt{\rho_{1}}(u_{1}-u_{2})\big{)}(\tau)\|_{L^2(\mathbb{T})}+C_{T}\|(u_{1}-u_{2})_{x}\|_{L^2(0,t;L^2(\mathbb{T}))}.
\end{split}
\end{equation}
Substituting the above estimates of $I_{i}^{4}$ $(i=1,2,3,4)$ into (\ref{remardf}) and applying the Gr${\rm{\ddot{o}}}$nwall inequality, we have
\begin{align}
&\sup_{\tau\in [0,t]}\|(f_{1}-f_{2})(\tau)\|_{L^1_{1}(\mathbb{T}\times\mathbb{R})}\leq C_{T}\sup_{\tau\in [0,t]}\|\big{(}\sqrt{\rho_{1}}(u_{1}-u_{2})\big{)}(\tau)\|_{L^2(\mathbb{T})}+C_{T}\|(u_{1}-u_{2})_{x}\|_{L^2(0,t;L^2(\mathbb{T}))},\label{remardfff}
\end{align}
which together with (\ref{widetilderho1})-(\ref{widetildeu}) leads to
\begin{align}
&\sup_{\tau\in [0,t]}\|\big{(}\sqrt{\rho_{1}}(u_{1}-u_{2})\big{)}(\tau)\|_{L^2(\mathbb{T})}^2+\|(u_{1}-u_{2})_{x}\|_{L^2(0,t;L^2(\mathbb{T}))}^2\nonumber\\
&\quad\leq C_{T}\int_{0}^{t}\Big{(}1+\|u_{2t}(\tau)\|_{L^2(\mathbb{T})}^2+\|u_{2}(\tau)\|_{H^2(\mathbb{T})}\Big{)}\Big{(}\sup_{\omega\in [0,\tau]}\|\big{(}\sqrt{\rho_{1}}(u_{1}-u_{2})\big{)}(\omega)\|_{L^2(\mathbb{T})}^2\nonumber\\
&\quad\quad+\|(u_{1}-u_{2})_{x}\|_{L^2(0,\tau;L^2(\mathbb{T}))}^2\Big{)}d\tau.\label{pprr}
\end{align}
The combination of $(\ref{r2})$, (\ref{widetilderho1}), (\ref{remardfff})-(\ref{pprr}), and the Gr${\rm{\ddot{o}}}$nwall inequality gives rise to $(\rho _{1},u_{1},f_{1})=(\rho _{2},u_{2},f_{2})$ a.e. in $\mathbb{T}\times\mathbb{R}\times [0,T]$. The proof of Proposition \ref{prop31} is completed.

\end{proof}

\section{Long time behavior}

 In this section, we study the large time behavior of global solutions to the IVP (\ref{m1})-(\ref{kappa}). First, based on relative entropy estimates for the compressible Navier-Stokes equation $(\ref{m1})_{1}$-$(\ref{m1})_{2}$, we have
\begin{prop}\label{prop41}
Let $(\rho,u,f)$ be any global weak solution to the IVP $(\ref{m1})$-$(\ref{kappa})$ in the sense of Definition \ref{defn11} satisfying $(\ref{r1})$-$(\ref{massmomentum})$ for any $T>0$. Then, under the assumptions of Theorem \ref{theorem11}, it holds
\begin{equation}
\begin{split}
\lim_{t\rightarrow \infty}\Big{(}\|(\rho-\overline{\rho_{0}})(t)\|_{L^{p}(\mathbb{T})}+\|\big{(}\sqrt{\rho}(u-\overline{u})\big{)}(t)\|_{L^2(\mathbb{T})}\Big{)}=0,\quad p\in[1,\infty),\label{b21jia}
\end{split}
\end{equation}
where $\overline{\rho_{0}}$ and $\overline{u}(t)$ are given by $(\ref{rhoinfty})$.
\end{prop}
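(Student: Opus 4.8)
The plan is to run a relative-entropy (modulated-energy) argument around the time-dependent reference state $(\overline{\rho_0},\overline u(t))$ and then to promote the resulting a.e.\ limit to the value $0$ by an \emph{averaged} equilibration estimate for the density. Assume $\overline{\rho_0}>0$ (otherwise $\rho\equiv0$ and the claim is trivial), and introduce the modulated energy
\[
\mathcal H(t):=\int_{\mathbb T}\Big(\tfrac12\rho|u-\overline u(t)|^2+\tfrac{1}{\gamma-1}\big(\rho^{\gamma}-\overline{\rho_0}^{\,\gamma}-\gamma\,\overline{\rho_0}^{\,\gamma-1}(\rho-\overline{\rho_0})\big)\Big)(x,t)\,dx,
\]
which is nonnegative by convexity of $s\mapsto s^{\gamma}$ and, since $\int_{\mathbb T}\rho\,dx=\overline{\rho_0}$, equals $\int_{\mathbb T}(\tfrac12\rho|u|^2+\tfrac{\rho^{\gamma}}{\gamma-1})\,dx-\tfrac1{2\overline{\rho_0}}\big(\int_{\mathbb T}\rho u\,dx\big)^2-\overline{\rho_0}^{\,\gamma}$. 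First I would combine the energy inequality $(\ref{energyCNSVFP})$ with the identity $\frac{d}{dt}\int_{\mathbb T}\rho u\,dx=-\int_{\mathbb T\times\mathbb R}\rho(u-v)f\,dv\,dx$ (obtained by integrating $(\ref{m1})_{2}$ over $\mathbb T$) to get $\frac{d}{dt}\mathcal H(t)\le-\int_{\mathbb T}\mu(\rho)|u_x|^2\,dx-\int_{\mathbb T}\rho(nu-nw)(u-\overline u)\,dx$, and then bound the coupling term by the pointwise Cauchy--Schwarz inequality $|\rho(nu-nw)|\le(\rho n)^{1/2}\big(\int_{\mathbb R}\rho|(u-v)\sqrt f-2(\sqrt f)_v|^2dv\big)^{1/2}$, using $\rho\le\rho_+$, $\int_{\mathbb T}n\,dx=\overline{n_0}$ from $(\ref{basicCNSVFP})$--$(\ref{massmomentum})$, the Poincaré-type bound $\|u-\overline u\|_{L^{\infty}(\mathbb T)}\le C\|u_x\|_{L^2(\mathbb T)}$ (valid because $\int_{\mathbb T}\rho(u-\overline u)\,dx=0$), and $\mu(\rho)\ge1$, to obtain
\[
\frac{d}{dt}\mathcal H(t)\le-\tfrac12\|u_x(t)\|_{L^2(\mathbb T)}^2+C\,D_2(t),\qquad D_2(t):=\int_{\mathbb T\times\mathbb R}\rho\big|(u-v)\sqrt f-2(\sqrt f)_v\big|^2\,dv\,dx.
\]
Since $(\ref{energyCNSVFP})$ also gives $\int_0^{\infty}D_2\,dt<\infty$ and $\int_0^{\infty}\|u_x\|_{L^2(\mathbb T)}^2\,dt<\infty$, the map $t\mapsto\mathcal H(t)-C\int_0^tD_2$ is a.e.\ nonincreasing and bounded below, so $\mathcal H(t)\to\mathcal H_{\infty}\ge0$.

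The heart of the argument is to show $\mathcal H_{\infty}=0$. The kinetic part obeys $\int_{\mathbb T}\tfrac12\rho|u-\overline u|^2\,dx\le C\|u_x\|_{L^2(\mathbb T)}^2\in L^1(0,\infty)$, so it tends to $0$ along a set of times of full density. For the potential part I would test the momentum equation against $\mathcal I(\rho-\overline{\rho_0})$, with $\mathcal I$ as in $(\ref{j})$: setting $m_r:=\rho(u-\overline u)$ and $\Phi(t):=-\int_{\mathbb T}m_r\,\mathcal I(\rho-\overline{\rho_0})\,dx$, which is bounded since $\|\mathcal I(\rho-\overline{\rho_0})\|_{L^{\infty}}\le\|\rho-\overline{\rho_0}\|_{L^1}\le2\overline{\rho_0}$ and $\|m_r\|_{L^1}\le C$, a direct computation using $(\mathcal I(\rho-\overline{\rho_0}))_x=\rho-\overline{\rho_0}$, $\int_{\mathbb T}\rho\,\mathcal I(\rho-\overline{\rho_0})\,dx=0$, and $(\ref{m1})_{1}$--$(\ref{m1})_{2}$ gives
\[
\int_{\mathbb T}\big(P(\rho)-P(\overline{\rho_0})\big)(\rho-\overline{\rho_0})\,dx=-\dot\Phi-\int_{\mathbb T}\rho(u-\overline u)^2(\rho-\overline{\rho_0})\,dx+\int_{\mathbb T}\mu(\rho)u_x(\rho-\overline{\rho_0})\,dx+\int_{\mathbb T}\rho(nu-nw)\mathcal I(\rho-\overline{\rho_0})\,dx+\int_{\mathbb T}m_r^2\,dx.
\]
Integrating over $[0,T]$ and using $(\ref{basicCNSVFP})$--$(\ref{massmomentum})$, the finiteness of $\int_0^{\infty}(\|\sqrt{\mu(\rho)}u_x\|_{L^2}^2+D_2)\,dt$, Young's inequality to absorb the $\mu(\rho)u_x(\rho-\overline{\rho_0})$ term using the coercivity $\int_{\mathbb T}(P(\rho)-P(\overline{\rho_0}))(\rho-\overline{\rho_0})\,dx\ge c_0\|\rho-\overline{\rho_0}\|_{L^2(\mathbb T)}^2$ (valid for $0\le\rho\le\rho_+$, $\overline{\rho_0}>0$), and the fact that the only remaining non-$O(1)$ contribution satisfies $\int_0^T\int_{\mathbb T}\rho|nu-nw|\,dx\,dt\le C\big(\int_0^TD_2\,dt\big)^{1/2}T^{1/2}=o(T)$, I would deduce $\frac1T\int_0^T\|\rho(t)-\overline{\rho_0}\|_{L^2(\mathbb T)}^2\,dt\to0$. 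Hence $\|\rho(t)-\overline{\rho_0}\|_{L^2(\mathbb T)}\to0$ also along a set of full density; intersecting the two sets I pick $t_n\to\infty$ with $\|u_x(t_n)\|_{L^2}\to0$ and $\|\rho(t_n)-\overline{\rho_0}\|_{L^2}\to0$, whence (using also the upper bound $\int_{\mathbb T}(\rho^{\gamma}-\overline{\rho_0}^{\gamma}-\gamma\overline{\rho_0}^{\gamma-1}(\rho-\overline{\rho_0}))\,dx\le C\|\rho-\overline{\rho_0}\|_{L^2}^2$ for bounded densities) $\mathcal H(t_n)\to0$, so $\mathcal H_{\infty}=0$ and $\mathcal H(t)\to0$.

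Finally, $\mathcal H(t)\to0$ yields $\|\sqrt\rho(u-\overline u)(t)\|_{L^2(\mathbb T)}\to0$ directly, and the coercivity above gives $\|\rho(t)-\overline{\rho_0}\|_{L^2(\mathbb T)}\to0$; interpolating with $0\le\rho\le\rho_+$ gives $\|\rho(t)-\overline{\rho_0}\|_{L^p(\mathbb T)}\to0$ for all $p\in[1,\infty)$, i.e.\ $(\ref{b1})$. I expect the main difficulty to be twofold. First, making the relative-entropy inequality for $\mathcal H$ rigorous at the level of weak solutions, where the coupling term must be controlled through the density-\emph{weighted} dissipation $D_2$ (not an unweighted one, since $\rho$ may vanish). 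Second, and more seriously, controlling every error term in the $\Phi$-identity uniformly---or at least sublinearly---in $T$: the drag contribution only provides the $O(T^{1/2})$ bound above, which is exactly why the density equilibration has to be formulated in the averaged ($\frac1T\int_0^T$) sense rather than as a pointwise-in-time decay estimate.
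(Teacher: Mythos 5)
Your proposal is correct (at the same formal level as the paper's own argument, which likewise asserts the modulated energy balance (\ref{entropyfluid22}) between a.e.\ intermediate times $s\le t$ and defers its justification for weak solutions to \cite{lhl1,lhl2}), and it is built from the same two core ingredients as the paper: the relative entropy around $(\overline{\rho_{0}},\overline{u}(t))$ together with the Poincar\'e-type bound $\|u-\overline{u}\|_{L^{\infty}}\le\|u_{x}\|_{L^{2}}$ (the paper's (\ref{um1infty})), and the auxiliary functional $\int_{\mathbb{T}}\rho(u-\overline{u})\,\mathcal{I}(\rho-\overline{\rho_{0}})\,dx$ built from the operator (\ref{j}) to generate coercivity in $\rho-\overline{\rho_{0}}$. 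Where you genuinely differ is in the bookkeeping of the second half. The paper folds the cross term into one perturbed functional $\mathcal{E}^{\eta}_{NS}$ with $\eta$ small, verifies the equivalences (\ref{ENSsim}), deduces both $\int_{0}^{\infty}\mathcal{E}^{\eta}_{NS}\,dt<\infty$ and the quasi-monotonicity (\ref{ENS1p}), and gets pointwise decay by averaging over $[t-1,t]$ as in (\ref{00oo}), with no subsequence extraction. You keep the unperturbed relative entropy $\mathcal{H}$, show $\mathcal{H}(t)-C\int_{0}^{t}D_{2}$ is essentially nonincreasing so that $\mathcal{H}(t)\to\mathcal{H}_{\infty}$, and then identify $\mathcal{H}_{\infty}=0$ through the separate Ces\`aro estimate $\frac{1}{T}\int_{0}^{T}\|\rho-\overline{\rho_{0}}\|_{L^{2}}^{2}\,dt\to0$ coming from your $\Phi$-identity, plus extraction of times along which both $\|u_{x}\|_{L^{2}}$ and $\|\rho-\overline{\rho_{0}}\|_{L^{2}}$ are small. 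I checked the $\Phi$-identity: your two velocity-quadratic terms combine to $\overline{\rho_{0}}\int_{\mathbb{T}}\rho(u-\overline{u})^{2}dx$, and the cancellation $\int_{\mathbb{T}}\rho\,\mathcal{I}(\rho-\overline{\rho_{0}})dx=0$ does hold because $\mathcal{I}(\rho-\overline{\rho_{0}})$ is periodic with zero mean and $\int_{\mathbb{T}}\rho\,dx=\overline{\rho_{0}}$; the $O(T^{1/2})$ bound on the drag contribution via $\int\rho|nu-nw|dx\le(\rho_{+}\overline{n_{0}})^{1/2}D_{2}^{1/2}$ is indeed enough for the averaged statement. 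What your route buys is that you never need the $\eta$-smallness equivalences (\ref{ENSsim}); what it costs is the extra ``limit exists plus full-density subsequence'' step that the paper's $[t-1,t]$ averaging makes unnecessary. Two cosmetic corrections: in your rewriting of $\mathcal{H}$ the subtracted constant should be $\overline{\rho_{0}}^{\gamma}/(\gamma-1)$ rather than $\overline{\rho_{0}}^{\gamma}$ (immaterial, being time-independent), and in the final step you should invoke the two-sided bound $c\,|\rho-\overline{\rho_{0}}|^{2}\le\Pi_{\gamma}(\rho|\overline{\rho_{0}})\le C\,|\rho-\overline{\rho_{0}}|^{2}$ on $[0,\rho_{+}]$ (valid for all $\gamma>1$ since $\overline{\rho_{0}}>0$), not the coercivity of $(P(\rho)-P(\overline{\rho_{0}}))(\rho-\overline{\rho_{0}})$, to pass from $\mathcal{H}(t)\to0$ to $\|\rho-\overline{\rho_{0}}\|_{L^{2}}\to0$; interpolation with $\rho\le\rho_{+}$ then gives all $p\in[1,\infty)$ as you say.
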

\begin{proof}
Define
 \begin{equation}\nonumber
\left\{
\begin{split}
&\mathcal{E}_{NS}^{\eta}(t):=\int_{\mathbb{T}}\big{(}\frac{1}{2}\rho|u-m_{1}|^2+\Pi_{\gamma}(\rho|\overline{\rho_{0}})\big{)}(x,t)dx-\eta\int_{\mathbb{T}}[\rho(u-m_{1})\mathcal{I}(\rho-\overline{\rho_{0}})](x,t)dx,\\
 &\mathcal{D}_{NS}^{\eta}(t):=\int_{\mathbb{T}}(\mu(\rho)|u_{x}|^2)(x,t)dx+\eta\int_{\mathbb{T}}[(\rho^{\gamma}-\overline{\rho_{0}}^{\gamma})(\rho-\overline{\rho_{0}})](x,t)dx\\
&\quad\quad\quad\quad-\eta\int_{\mathbb{T}}\big{[}\overline{\rho_{0}}\rho|u-m_{1}|^2+\mu(\rho)u_{x}(\rho-\overline{\rho_{0}})\big{]}(x,t)dx\\
&\quad\quad\quad\quad+\eta\int_{\mathbb{T}}\mathcal{I}(\rho-\overline{\rho_{0}})(x,t)\Big{(}-\int_{\mathbb{R}}\big{[}\rho \sqrt{f}\big{(} (u-v)\sqrt{f}-2(\sqrt{f})_{v}\big{)}\big{]}(x,v,t)dv\\
&\quad\quad\quad\quad+\frac{\rho(x,t)}{\overline{\rho_{0}}}\int_{\mathbb{T}\times\mathbb{R}} [\rho \sqrt{f} \big{(} (u-v)\sqrt{f}-2(\sqrt{f})_{v}](y,v,t)dvdy\Big{)}dx,
\end{split}
\right.
\end{equation}
where the operator $\mathcal{I}$ is denoted as (\ref{j}), and $m_{1}(t)$ and $\Pi_{\gamma}(\rho|\overline{\rho_{0}})$ are given by
\begin{equation}\label{m1m2pp}
\left\{
\begin{split}
&m_{1}(t):=\frac{\int_{\mathbb{T}}\rho u(x,t)dx}{\int_{\mathbb{T}}\rho_{0}(x)dx},\\
&\Pi_{\gamma}(\rho|\overline{\rho_{0}}):=\frac{\rho^{\gamma}}{\gamma-1}-\frac{\overline{\rho_{0}}^{\gamma}}{\gamma-1}-\frac{\gamma\overline{\rho_{0}}^{\gamma-1}}{\gamma-1}(\rho-\overline{\rho_{0}}).
\end{split}
\right.
\end{equation}
For a.e. $0\leq s\leq t<\infty$ (including $s=0$), one can show
\begin{equation}\label{entropyfluid22}
\begin{aligned}
&\mathcal{E}_{NS}^{\eta}(t)+\int_{s}^{t}\mathcal{D}_{NS}^{\eta}(\tau)d\tau\\
&\quad=\mathcal{E}_{NS}^{\eta}(s)++\int_{s}^{t}\int_{\mathbb{T}\times\mathbb{R}}\rho (u-v)f (u-m_{1})dxdv\\
&\quad\leq \mathcal{E}_{NS}^{\eta}(s)+\int_{s}^{t}\int_{\mathbb{T}\times\mathbb{R}} \big{[}\rho\sqrt{f} \big{|}  u-m_{1}\big{|} \big{|} (u-v)\sqrt{f}-2(\sqrt{f})_{v} \big{|} \big{]}(x,v,t)dvdx.
\end{aligned}
\end{equation}
Choosing a suitably small constant $\eta>0$, we can deduce after direct computations that
\begin{equation}\label{ENSsim}
\left\{
\begin{split}
&\frac{1}{C}\mathcal{E}_{NS}^{\eta}(t)\leq \|\big{(}\sqrt{\rho}(u-m_{1})\big{)}(t)\|_{L^2(\mathbb{T})}^2+\|(\rho-\overline{\rho_{0}})(t)\|_{L^2(\mathbb{T})}^2\leq C\mathcal{E}_{NS}^{\eta}(t),\\
&\mathcal{D}^{\eta}_{NS}(t) \geq  \frac{1}{2}\|u_{x}(t)\|_{L^2(\mathbb{T})}^2+\frac{1}{C}\mathcal{E}_{NS}^{\eta}(t)-C\|\big{[}\sqrt{\rho}\big{(} (u-v)\sqrt{f}-2(\sqrt{f})_{v}\big{)}\big{]}(t)\|_{L^2(\mathbb{T}\times\mathbb{R})}^2,
\end{split}
\right.
\end{equation}
where $C>1$ is a constant independent of time. The interested reader can refer to \cite{lhl1,lhl2} for details.

Then, by virtue of $(\ref{r1})$-$(\ref{massmomentum})$,  and the fact
\begin{equation}\label{um1infty}
\begin{split}
&(u-m_{1})(x,t)=\frac{\int_{\mathbb{T}} \rho(y,t)\int^{x}_{y}u_{z}(z,t)dzdy}{\int_{\mathbb{T}} \rho(y,t)dy}\leq \|u_{x}(t)\|_{L^2(\mathbb{T})},\quad (x,t)\in\mathbb{T}\times[0,T]
\end{split}
\end{equation}
 it holds
\begin{align}
&\int_{s}^{t}\int_{\mathbb{T}\times\mathbb{R}} \big{[}\rho\sqrt{f} \big{|}  u-m_{1}\big{|} \big{|} (u-v)\sqrt{f}-2(\sqrt{f})_{v} \big{|} \big{]}(x,v,t)dvdx \nonumber\\
&\quad\leq \rho_{+}^{\frac{1}{2}}\int_{s}^{t}\|(u-m_{1})(\tau)\|_{L^{\infty}(\mathbb{T})}\|f(\tau)\|_{L^1(\mathbb{T}\times\mathbb{R})}^{\frac{1}{2}}\|\big{[}\sqrt{\rho}\big{(} (u-v)\sqrt{f}-2(\sqrt{f})_{v}\big{)}\big{]}(\tau)\|_{L(\mathbb{T}\times\mathbb{R})}d\tau\nonumber\\
&\quad\leq\frac{1}{2}\int_{s}^{t}\|u_{x}(\tau)\|_{L^2(\mathbb{T})}^2d\tau+\frac{\rho_{+}\|n_{0}\|_{L^1(\mathbb{T})}}{2}\int_{s}^{t}\|\big{[}\sqrt{\rho}\big{(} (u-v)\sqrt{f}-2(\sqrt{f})_{v}\big{)}\big{]}(\tau)\|_{L(\mathbb{T}\times\mathbb{R})}^2d\tau,\label{pp3}
\end{align}
which together with (\ref{entropyinequality}) and (\ref{entropyfluid22}) implies for a.e. $0\leq s\leq t<\infty$ that
\begin{equation}\label{ENS1p}
\left\{
\begin{split}
&\int_{0}^{\infty}\mathcal{E}_{NS}^{\eta}(\tau)d\tau<\infty,\\
&\mathcal{E}_{NS}^{\eta}(t)\leq \mathcal{E}_{NS}^{\eta}(s)+\frac{\rho_{+}\|n_{0}\|_{L^1(\mathbb{T})}+2C}{2}\int_{s}^{t}\|\big{[}\sqrt{\rho}\big{(} (u-v)\sqrt{f}-2(\sqrt{f})_{v}\big{)}\big{]}(\tau)\|_{L(\mathbb{T}\times\mathbb{R})}^2d\tau.
\end{split}
\right.
\end{equation}
Integrating $(\ref{ENS1p})_{2}$ with respect to $s$ over $ [t-1,t]$, we derive
\begin{equation}\label{00oo}
\begin{split}
&\mathcal{E}_{NS}^{\eta}(t)\leq \int_{t-1}^{t}\mathcal{E}_{NS}^{\eta}(s)ds+\frac{\rho_{+}\|n_{0}\|_{L^1(\mathbb{T})}+2C}{2}\int_{t-1}^{t}\|\big{[}\sqrt{\rho}\big{(} (u-v)\sqrt{f}-2(\sqrt{f})_{v}\big{)}\big{]}(\tau)\|_{L(\mathbb{T}\times\mathbb{R})}^2d\tau\\
&\quad\quad\quad\rightarrow 0,\quad \text{as} ~t\rightarrow \infty,
\end{split}
\end{equation}
where we have used (\ref{entropyinequality}) and $(\ref{ENS1p})$. By $(\ref{ENSsim})_{1}$ and (\ref{00oo}), (\ref{b21jia}) holds. The proof of Proposition \ref{prop41} is completed.
\end{proof}

Then, we derive
\begin{prop}\label{prop42}
Let $(\rho,u,f)$ be any global weak solution to the IVP $(\ref{m1})$-$(\ref{kappa})$ in the sense of Definition \ref{defn11} satisfying $(\ref{r1})$-$(\ref{massmomentum})$  for any $T>0$. Then, under the assumptions of Theorem \ref{theorem12}, it holds
\begin{equation}
\left\{
\begin{split}
&~~\rho(x,t)\geq \rho_{-},\quad (x,t)\in\mathbb{T}\times [0,T],\\
&\sup_{t\in[0,T]}\|\rho_{x}(t)\|_{L^2(\mathbb{T})}\leq C,\label{lowerweak}
\end{split}
\right.
\end{equation}
where $\rho_{-}>0$ and $C>0$ are two constant independent of the time $T>0$.

Moreover, we have 
\begin{align}
&\lim_{t\rightarrow \infty}\Big{(}\|(\rho-\overline{\rho_{0}})(t)\|_{L^{\infty}(\mathbb{T})}+\|(u-u_{c})(t)\|_{L^2(\mathbb{T})}+|(m_{i}-u_{c})(t)|\Big{)}=0,\quad i=1,2,\label{b21}
\end{align}
where $\overline{\rho_{0}}$, $u_{c}$, and $m_{1}(t)$ are given by $(\ref{overrho})$, $(\ref{rhoinfty})$, and $(\ref{m1m2pp})_{1}$ respectively, and $m_{2}(t),$ is denoted by
\begin{equation}\label{m1m2}
\begin{split}
&m_{2}(t):=\frac{\int_{\mathbb{T}}n w(x,t)dx}{\int_{\mathbb{T}}n_{0}(x)dx}.
\end{split}
\end{equation}
\end{prop}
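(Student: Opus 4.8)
The plan is to split the statement into two parts: (a) the uniform-in-time lower bound $\rho\ge\rho_-$ together with the uniform bound $\|\rho_x(t)\|_{L^2(\mathbb T)}\le C$, and (b) the convergence assertions $(\ref{b21})$. Part (a) is the main difficulty; once it is in hand, part (b) follows quickly from Proposition \ref{prop41} and an elementary scalar-ODE argument.

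For part (a) I would work with the effective velocity $U$ of $(\ref{m})$ and its equation $(\ref{bd})$. The first observation is that, using only the upper bound $\rho\le\rho_+$ of Lemma \ref{lemma22} and $\mu(\rho)\ge1$, the identity $\rho_x=\rho^2\mu(\rho)^{-1}(U-u-\mathcal I(n))$ gives $\|\rho_x(t)\|_{L^2(\mathbb T)}\le\rho_+^{3/2}\big(\|(\sqrt\rho U)(t)\|_{L^2(\mathbb T)}+\|(\sqrt\rho u)(t)\|_{L^2(\mathbb T)}+\overline{\rho_0}^{1/2}\|\mathcal I(n)(t)\|_{L^\infty(\mathbb T)}\big)\le C(\|(\sqrt\rho U)(t)\|_{L^2(\mathbb T)}+1)$ by Lemmas \ref{lemma22} and \ref{lemma23}, so it suffices to bound $\|(\sqrt\rho U)(t)\|_{L^2(\mathbb T)}$ uniformly in time. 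Testing $(\ref{bd})$ by $U$ produces the damping term $\gamma\int_{\mathbb T}\rho^{\gamma+1}\mu(\rho)^{-1}|U|^2$; after a weighted Cauchy–Schwarz, the forcing $\gamma\int_{\mathbb T}\rho^{\gamma+1}\mu(\rho)^{-1}(u+\mathcal I(n))U+\int_{\mathbb T}\rho U\int_0^1 nw\,dy$ is absorbed by a small fraction of the damping plus terms controlled uniformly by $\int_{\mathbb T}\rho|u+\mathcal I(n)|^2$ and $\|nw\|_{L^1(\mathbb T)}$, all uniformly bounded by $(\ref{basicCNSVFP})$ and $(\ref{N1time})$. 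The key point is that the damping coefficient satisfies $\rho^{\gamma+1}\mu(\rho)^{-1}\ge c\,\rho$ with $c=c(\overline{\rho_0},\rho_+)>0$ \emph{once} $\rho\ge\overline{\rho_0}/2$; on any interval where this holds one obtains $\frac{d}{dt}\|(\sqrt\rho U)(t)\|_{L^2}^2+c_0\|(\sqrt\rho U)(t)\|_{L^2}^2\le C$ with $c_0,C$ independent of time, hence a uniform bound on $\|(\sqrt\rho U)(t)\|_{L^2}$, and thus on $\|\rho_x(t)\|_{L^2}$, there.

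This sets up a continuity/bootstrap argument for the lower bound. On any fixed finite interval $[0,T_*]$ the finite-time lower bound $\rho\ge\rho_{T_*}>0$ of Lemma \ref{lemma25} is available. To propagate positivity on $[T_*,\infty)$ I use that $\rho-\overline{\rho_0}$ has zero spatial mean by mass conservation $(\ref{massmomentum})$, so $\|(\rho-\overline{\rho_0})(t)\|_{L^\infty(\mathbb T)}^2\le 2\|(\rho-\overline{\rho_0})(t)\|_{L^2(\mathbb T)}\|\rho_x(t)\|_{L^2(\mathbb T)}$; combined with $\|(\rho-\overline{\rho_0})(t)\|_{L^2(\mathbb T)}\to0$ from Proposition \ref{prop41} and the uniform $H^1$-bound above (valid under the standing hypothesis $\rho\ge\overline{\rho_0}/2$), the $L^\infty$-distance of $\rho$ to $\overline{\rho_0}$ stays below $\overline{\rho_0}/2$, closing the argument and giving $\rho\ge\rho_->0$ and $\sup_t\|\rho_x(t)\|_{L^2}\le C$. \textbf{The main obstacle is getting this bootstrap started:} one needs one time $t_0$ at which $\rho(\cdot,t_0)$ is \emph{uniformly} close to $\overline{\rho_0}$, which is not provided directly by the $L^p$-decay of Proposition \ref{prop41}. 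I would obtain it by exploiting the time-integrability $\int_0^\infty\big(\|(\sqrt\rho(u-m_1))(\tau)\|_{L^2}^2+\|(\rho-\overline{\rho_0})(\tau)\|_{L^2}^2\big)d\tau<\infty$ coming out of the proof of Proposition \ref{prop41} (applied to a suitably recentered version of $U$) to extract a sequence $t_k\to\infty$ along which $\|\rho_x(t_k)\|_{L^2}\to0$, hence $\|(\rho-\overline{\rho_0})(t_k)\|_{L^\infty}\to0$, which supplies the required $t_0$.

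Granting part (a), part (b) is short. Since $\rho\ge\rho_->0$ and $\|\rho_x(t)\|_{L^2}\le C$ uniformly, the mean-zero interpolation and $\|(\rho-\overline{\rho_0})(t)\|_{L^2}\to0$ give $\|(\rho-\overline{\rho_0})(t)\|_{L^\infty(\mathbb T)}\to0$, while $\|(u-m_1)(t)\|_{L^2(\mathbb T)}^2\le\rho_-^{-1}\|(\sqrt\rho(u-m_1))(t)\|_{L^2(\mathbb T)}^2\to0$ by Proposition \ref{prop41}. To identify the common limit, note first that the momentum conservation law in $(\ref{massmomentum})$ yields $\overline{\rho_0}(m_1-u_c)+\overline{n_0}(m_2-u_c)=0$ for all $t$, so it suffices to prove $m_1(t)-m_2(t)\to0$. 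Differentiating the fluid and particle momenta gives $\frac{d}{dt}(\overline{\rho_0}m_1(t))=-\int_{\mathbb T}(\rho n(u-w))(x,t)dx=:-G(t)$ and $\frac{d}{dt}(\overline{n_0}m_2(t))=G(t)$; using $\|(\rho-\overline{\rho_0})(t)\|_{L^\infty}\to0$, $\|(u-m_1)(t)\|_{L^2}\to0$, and the uniform bounds $\|n(t)\|_{L^4}+\|nw(t)\|_{L^2}\le C$ (Lemma \ref{lemma23}) and $|m_1(t)|\le C$, one expands $G(t)=\overline{\rho_0}\,\overline{n_0}\,(m_1(t)-m_2(t))+o(1)$ as $t\to\infty$. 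Hence $p(t):=m_1(t)-m_2(t)$ solves $\dot p(t)=-(\overline{\rho_0}+\overline{n_0})\,p(t)+r(t)$ with $r(t)\to0$, which forces $p(t)\to0$; therefore $m_1(t)\to u_c$ and $m_2(t)\to u_c$, and $\|(u-u_c)(t)\|_{L^2(\mathbb T)}\le\|(u-m_1)(t)\|_{L^2(\mathbb T)}+|m_1(t)-u_c|\to0$, which is $(\ref{b21})$.
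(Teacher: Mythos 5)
The decisive issue is part (a), i.e.\ the uniform-in-time bounds (\ref{lowerweak}). The paper itself does not reprove these: it obtains them by invoking the uniform estimates of \cite{lhl2} ``with few modifications''. Your substitute — the $U$-energy estimate for (\ref{bd}) giving $\frac{d}{dt}\|\sqrt{\rho}U\|_{L^2}^2+c_0\|\sqrt{\rho}U\|_{L^2}^2\le C$ on any interval where $\rho\ge\overline{\rho_0}/2$, plus a continuation argument via $\|\rho-\overline{\rho_0}\|_{L^{\infty}}^2\le 2\|\rho-\overline{\rho_0}\|_{L^2}\|\rho_x\|_{L^2}$ — is structurally reasonable, but, as you yourself flag, it needs an initialization time $t_0$ at which simultaneously $\|\rho(t_0)-\overline{\rho_0}\|_{L^{\infty}}$ is small and $\|\rho_x(t_0)\|_{L^2}$ (equivalently $\|\sqrt{\rho}U(t_0)\|_{L^2}$) is bounded by a constant independent of $t_0$; the finite-time bounds of Lemma \ref{lemma25} grow with $t_0$, so they cannot launch the bootstrap at late times. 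Your proposed fix — extracting $t_k\to\infty$ with $\|\rho_x(t_k)\|_{L^2}\to0$ from the time-integrability in the proof of Proposition \ref{prop41}, ``applied to a suitably recentered version of $U$'' — is not substantiated: the quantities known to be integrable in time ($\|\sqrt{\rho}(u-m_1)\|_{L^2}^2$, $\|\rho-\overline{\rho_0}\|_{L^2}^2$, $\|u_x\|_{L^2}^2$ and the kinetic dissipation) contain no control of $\rho_x$, and $U$ itself does not decay since $\mathcal{I}(n)$ tends to $\mathcal{I}(\overline{n_0})\neq 0$. Proving something like $\int_0^{\infty}\|\sqrt{\rho}\,(U-\mathcal{I}(n)-m_1)\|_{L^2}^2dt<\infty$ amounts to a uniform-in-time BD-type entropy estimate with the drag force handled using only the uniform $L^1_2$ moment bounds — i.e.\ essentially the content of the cited argument — and it is not obviously available without the very lower bound being proved. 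So (\ref{lowerweak}), on which all of (\ref{b21}) rests through $\rho_-$ and $\sup_t\|\rho_x\|_{L^2}$, is left with a genuine gap.

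Granting (\ref{lowerweak}), your part (b) is a genuinely different route from the paper's and is essentially workable, but one step needs repair. The paper proves $\int_0^{\infty}|(m_1-m_2)(t)|^2dt<\infty$ by expanding the Fokker--Planck dissipation around $m_1,m_2$ (using $\rho\ge\rho_-$), then combines the identity (\ref{m1m2t}) with time-averaging over $[t-1,t]$; you instead extract a damping ODE $\dot p=-(\overline{\rho_0}+\overline{n_0})p+r$ from $G(t)=\int_{\mathbb{T}}\rho n(u-w)\,dx$. However, your justification that $r(t)\to0$ invokes ``uniform bounds $\|n(t)\|_{L^4}+\|nw(t)\|_{L^2}\le C$'' from Lemma \ref{lemma23}, which are \emph{not} uniform in time (they are $C_T$, inheriting the $e^{\rho_+T}$ growth of $\|f\|_{L^{\infty}}$), and $\|u_x(t)\|_{L^2}$ is only square-integrable in time, not pointwise small. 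The step can be saved by estimating the remainder using only the time-uniform facts $\|n\|_{L^1}=\overline{n_0}$, $\|nw\|_{L^1}\le\|f\|_{L^1_2}\le C$, $|m_1(t)|\le C$, $\|u-m_1\|_{L^{\infty}}\le\|u_x\|_{L^2}$ and $\|\rho-\overline{\rho_0}\|_{L^{\infty}}\to0$, which yields $r=r_1+r_2$ with $r_1\to0$ and $r_2\in L^2(0,\infty)$; Duhamel then still gives $p(t)\to0$, after which your identification of the limit via momentum conservation and the final interpolation for $\rho$ agree with the paper.
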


\begin{proof}
Repeating the arguments as in \cite{lhl2} with few modifications, we can show the uniform estimates $(\ref{lowerweak})$. To get (\ref{b21}), we first prove $|(m_{1}-m_{2})(t)|^2\in L^1(\mathbb{R}_{+})$. Indeed, a straightforward computation yields
\begin{equation}\nonumber
\begin{split}
&\|\big{[}\sqrt{\rho}\big{(}(u-v)\sqrt{f}-2(\sqrt{f})_{v}\big{)}\big{]}(t)\|_{L^2(\mathbb{T}\times\mathbb{R})}^2\\
&\quad\geq \rho_{-}\|\big{[}(u-m_{1})\sqrt{f}+(m_{1}-m_{2})\sqrt{f}+(m_{2}-v)\sqrt{f}-2(\sqrt{f})_{v}\big{]}(t)\|_{L^2(\mathbb{T}\times\mathbb{R})}^2\\
&\quad=\rho_{-}\|\big{(}\sqrt{f}(u-m_{1})\big{)}(t)\|_{L^2(\mathbb{T}\times\mathbb{R})}^2+\rho_{-}|(m_{1}-m_{2})(t)|^2\|f(t)\|_{L^1(\mathbb{T}\times\mathbb{R})}^{\frac{1}{2}}\\
&\quad\quad+\rho_{-}\|\big{(}(m_{2}-v)\sqrt{f}-2(\sqrt{f})_{v}\big{)}(t)\|_{L^2(\mathbb{T}\times\mathbb{R})}^2\\
&\quad\quad+2\rho_{-}(m_{1}-m_{2})(t)\int_{\mathbb{T}\times\mathbb{R}} [f (u-m_{1})](x,v,t)dvdx\\
&\quad\quad+2\rho_{-}\int_{\mathbb{T}\times\mathbb{R}} \big{[}\sqrt{f} (u-m_{1})((m_{2}-v)\sqrt{f}-(\sqrt{f})_{v})\big{]}(x,v,t)dvdx,
\end{split}
\end{equation}
which together with $(\ref{mass})$ and the fact $\|(u-m_{1})(t)\|_{L^{\infty}(\mathbb{T})}\leq \|u_{x}(t)\|_{L^2(\mathbb{T})}$ gives rise to
\begin{equation}\nonumber
\begin{split}
&\|\big{[}\sqrt{\rho}\big{(}(u-v)\sqrt{f}-2(\sqrt{f})_{v}\big{)}\big{]}(t)\|_{L^2(\mathbb{T}\times\mathbb{R})}^2\\
&\quad\geq -3\rho_{-}\|n_{0}\|_{L^1(\mathbb{T})}\|u_{x}(t)\|_{L^2(\mathbb{T})}^2+\frac{\rho_{-}\|n_{0}\|_{L^1(\mathbb{T})}}{2}|(m_{1}-m_{2})(t)|^2\\
&\quad\quad+\frac{\rho_{-}}{2}\|\big{(}(m_{2}-v)\sqrt{f}-2(\sqrt{f})_{v}\big{)}(t)\|_{L^2(\mathbb{T}\times\mathbb{R})}^2.
\end{split}
\end{equation}
Therefore, we have
\begin{align}
&\int_{0}^{\infty}|(m_{1}-m_{2})(t)|^2dt\nonumber\\
&\quad\leq \frac{2(3\rho_{-}\|n_{0}\|_{L^1(\mathbb{T})}+1)}{\|n_{0}\|_{L^1(\mathbb{T})}\rho_{-}}\int_{0}^{\infty}\Big{(}\|u_{x}(t)\|_{L^2(\mathbb{T})}^2+\|\big{[}\sqrt{\rho}\big{(}(u-v)\sqrt{f}-2(\sqrt{f})_{v}\big{)}\big{]}(t)\|_{L^2(\mathbb{T}\times\mathbb{R})}^2\Big{)}dt.\label{m1m2leq}
\end{align}

Next, it can be verified by $(\ref{m1})_{2}$-$(\ref{m1})_{3}$ that
 \begin{align}
&\frac{1}{2}\frac{d}{dt}|(m_{2}-m_{1})(t)|^2\nonumber\\
&\quad=C_{1}(m_{2}-m_{1})(t)\int_{\mathbb{T}\times\mathbb{R}}\Big{[}\rho\sqrt{f}\big{(} (u-v)\sqrt{f}-2(\sqrt{f})_{v}\big{)}\big{]}(x,v,t)dvdx,\label{m1m2t}
\end{align}
where the constant $C_{1}>0$ is given by 
$$
C_{1}:=\frac{\|\rho_{0}\|_{L^1(\mathbb{T})}+\|n_{0}\|_{L^1(\mathbb{T})}}{\|\rho_{0}\|_{L^1(\mathbb{T})}\|n_{0}\|_{L^1(\mathbb{T})}}.
$$
For any $0\leq s\leq t<\infty$, we integrate (\ref{m1m2t}) over $[s,t]$ to derive
 \begin{align}
&\frac{1}{2}|(m_{1}-m_{2})(t)|^2\nonumber\\
&\quad\leq \frac{1}{2}|(m_{1}-m_{2})(s)|^2+\rho_{+}^{\frac{1}{2}}\|n_{0}\|_{L^1(\mathbb{T})}^{\frac{1}{2}}C_{1}\int_{s}^{t}\Big{(} |(m_{1}-m_{2})(\tau)|^2\nonumber\\
&\quad\quad+\|\big{[}\sqrt{\rho}\big{(}(u-v)\sqrt{f}-2(\sqrt{f})_{v}\big{)}\big{]}(\tau)\|_{L^2(\mathbb{T}\times\mathbb{R})}^2\Big{)}d\tau,\label{whoswho}
 \end{align}
and thence integrate (\ref{whoswho}) with respect to $s$ over $ [t-1,t]$ to have
\begin{align}
&\frac{1}{2}|(m_{1}-m_{2})(t)|^2\nonumber\\
&\quad\leq \frac{1}{2}\int_{t-1}^{t}|(m_{1}-m_{2})(s)|^2ds+\rho_{+}^{\frac{1}{2}}\|n_{0}\|_{L^1(\mathbb{T})}^{\frac{1}{2}}C_{1}\int_{t-1}^{t}\Big{(}|(m_{1}-m_{2})(\tau)|^2\nonumber\\
&\quad\quad+\|\big{[}\sqrt{\rho}\big{(}(u-v)\sqrt{f}-2(\sqrt{f})_{v}\big{)}\big{]}(\tau)\|_{L^2(\mathbb{T}\times\mathbb{R})}^2\Big{)}d\tau.\label{whoswho1}
\end{align}
Due to $(\ref{entropyinequality})$ and (\ref{m1m2leq}), the right-hand side of (\ref{whoswho1}) tends to $0$ as $t\rightarrow \infty$. Thus, it follows
\begin{equation}\label{m1m2behavior}
\begin{split}
\lim_{t\rightarrow\infty}|(m_{1}-m_{2})(t)|^2=0.
\end{split}
\end{equation}

Finally, it holds by the conservation laws $(\ref{massmomentum})$ that
$$
m_{1}(t)\int_{\mathbb{T}}\rho_{0}(x)dx+m_{2}(t)\int_{\mathbb{T}}n_{0}(x)dx=\int_{\mathbb{T}}(m_{0}+j_{0})(x)dx,
$$
which leads to
\begin{equation}\label{m1m2m1m2}
\begin{split}
&|(m_{1}-m_{2})(t)|=\frac{\int_{\mathbb{T}}(\rho_{0}+n_{0})(x)dx}{\int_{\mathbb{T}}\rho_{0}(x)dx}|(m_{2}-u_{c})(t)|,
\end{split}
\end{equation}
where the constant $u_{c}$ is given by $(\ref{rhoinfty})$. One deduces by (\ref{m1m2behavior})-(\ref{m1m2m1m2}) that
\begin{align}
&\lim_{t\rightarrow\infty}\big{(}|(m_{1}-u_{c})|+|(m_{2}-u_{c})(t)|\big{)}\leq\lim_{t\rightarrow\infty}\big{(} |(m_{1}-m_{2})(t)|+2|(m_{2}-u_{c})(t)|\big{)}=0.\label{m11m22}
\end{align}
Combining (\ref{b21jia})-(\ref{lowerweak}) and (\ref{m11m22}) together, we have
\begin{equation}\nonumber
\begin{split}
&\lim_{t\rightarrow\infty}\|(u-u_{c})(t)\|_{L^2(\mathbb{T})}\leq \lim_{t\rightarrow\infty}\big{(}\frac{1}{\rho_{-}^{\frac{1}{2}}}\|\big{(}\sqrt{\rho}(u-m_{1})\big{)}(t)\|_{L^2(\mathbb{T})}+|(m_{1}-u_{c})(t)|\big{)}=0.
\end{split}
\end{equation}
It follows from $(\ref{r1})_{1}$, $(\ref{b21jia})$, and the Gagliardo-Nirenberg inequality that
\begin{equation}\nonumber
\begin{split}
&\lim_{t\rightarrow\infty}\|(\rho-\overline{\rho_{0}})(t)\|_{L^{\infty}(\mathbb{T})}\leq \sqrt{2}\lim_{t\rightarrow\infty}\|\rho_{x}(t)\|_{L^2(\mathbb{T})}^{\frac{1}{2}}\|(\rho-\overline{\rho_{0}})(t)\|_{L^2(\mathbb{T})}^{\frac{1}{2}}= 0.
\end{split}
\end{equation}
 The proof of Proposition \ref{prop42} is completed.
\end{proof}

Furthermore, the time convergence of the distribution function $f$ to the global Maxwellian $M_{\overline{n_{0}},u_{c}}(v)$ is proved based on the ideas inspired by \cite{bouchut1} and the compactness techniques developed in \cite{arsenio1}.

\begin{prop}\label{prop43}
Let $(\rho,u,f)$ be any global weak solution to the IVP $(\ref{m1})$-$(\ref{kappa})$ in the sense of Definition \ref{defn11} satisfying $(\ref{r1})$-$(\ref{massmomentum})$  for any $T>0$. Then, under the assumptions of Theorem \ref{theorem12}, it holds
\begin{equation}
\left\{
\begin{split}
&\lim_{t\rightarrow \infty} \|(f-M_{\overline{n_{0}},u_{c}})(t)\|_{L^1(\mathbb{T}\times\mathbb{R})}=0,\\
&\lim_{t\rightarrow \infty}\Big{(}\|(n-\overline{n_{0}})(t)\|_{L^1(\mathbb{T})}+\|\big{(}n(w-u_{c})\big{)}(t)\|_{L^1(\mathbb{T})}\Big{)}=0,\label{b22}
\end{split}
\right.
\end{equation}
where  the global Maxwellian $M_{\overline{n_{0}},u_{c}}(v)$ is denoted through $(\ref{M})$, and the constants $u_{c}$ and $\overline{n_{0}}$ are given by $(\ref{rhoinfty})$ and $(\ref{n0})$ respectively.
\end{prop}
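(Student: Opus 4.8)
The plan is to follow the strategy of Bouchut--Dolbeault \cite{bouchut1} and study the time-shifted family. For $s>0$ define $f^{s}(x,v,t):=f(x,v,t+s)$, $\rho^{s}(x,t):=\rho(x,t+s)$, $u^{s}(x,t):=u(x,t+s)$ on $t\in[0,1]$; each $(\rho^{s},u^{s},f^{s})$ solves $(\ref{m1})$-$(\ref{kappa})$ on $[0,1]$. The first step is to collect the bounds that are uniform in $s$: from the entropy inequality $(\ref{entropyinequality})$ and the elementary inequality $f|\log f|\le f\log f+\tfrac12|v|^{2}f+Ce^{-|v|^{2}/2}$ one gets $\sup_{s>0}\sup_{t\in[0,1]}(\|f^{s}(t)\|_{L^{1}_{2}(\mathbb{T}\times\mathbb{R})}+\|(f^{s}\log f^{s})(t)\|_{L^{1}(\mathbb{T}\times\mathbb{R})})\le C$, hence $\{f^{s}(t)\}$ is equi-integrable in $(x,v)$ uniformly in $s,t$; Proposition \ref{prop42} gives $\rho_{-}\le\rho^{s}\le\rho_{+}$, $\rho^{s}\to\overline{\rho_{0}}$ in $L^{\infty}((0,1)\times\mathbb{T})$ and $u^{s}\to u_{c}$ in $L^{\infty}(0,1;L^{2}(\mathbb{T}))$; and, since the dissipation in $(\ref{energyCNSVFP})$ is integrable over $(0,\infty)$ by $(\ref{entropyinequality})$,
\begin{equation}\nonumber
\int_{0}^{1}\Big(\|u^{s}_{x}(t)\|_{L^{2}(\mathbb{T})}^{2}+\big\|\big[\sqrt{\rho^{s}}\big((u^{s}-v)\sqrt{f^{s}}-2(\sqrt{f^{s}})_{v}\big)\big](t)\big\|_{L^{2}(\mathbb{T}\times\mathbb{R})}^{2}\Big)\,dt\longrightarrow 0\quad\text{as }s\to\infty,
\end{equation}
while $\rho^{s}\ge\rho_{-}$ together with $|v|^{2}f^{s}\in L^{\infty}_{t}L^{1}_{x,v}$ and $\sqrt{\rho^{s}}u^{s}\in L^{\infty}_{t}L^{2}$ forces $(\sqrt{f^{s}})_{v}$ to be bounded in $L^{2}((0,1)\times\mathbb{T}\times\mathbb{R})$ uniformly in $s$ (though not small). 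It then suffices to show $f^{s}\to M_{\overline{n_{0}},u_{c}}$ in $L^{1}((0,1)\times\mathbb{T}\times\mathbb{R})$.

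I expect the compactness of $\{f^{s}\}$ to be the main obstacle. Writing $(\ref{m1})_{3}$ as $\partial_{t}f^{s}+v\partial_{x}f^{s}=\partial_{v}\big(\rho^{s}(v-u^{s})f^{s}+\rho^{s}f^{s}_{v}\big)$, the transport operator provides velocity-averaging regularization, but the coefficients $\rho^{s},u^{s}$ have no $v$-regularity and only $L^{2}$ $x$-derivatives, so the DiPerna--Lions compactness lemmas of \cite{bouchut1,diperna1} do not apply. To overcome this I would invoke the compactness theory of Ars\'enio and Saint-Raymond \cite{arsenio1}: the uniform $L^{2}((0,1)\times\mathbb{T}\times\mathbb{R})$ bound on $(\sqrt{f^{s}})_{v}$, combined with the equi-integrability above and the kinetic transport structure of $(\ref{m1})_{3}$, yields relative compactness of $\{f^{s}\}$ in $L^{1}((0,1)\times\mathbb{T}\times\mathbb{R})$. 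Consequently, along any sequence $s_{n}\to\infty$ one can extract a subsequence with $f^{s_{n}}\to g$ strongly in $L^{1}$, $\sqrt{f^{s_{n}}}\to\sqrt{g}$ in $L^{2}$, and $(\sqrt{f^{s_{n}}})_{v}\rightharpoonup(\sqrt{g})_{v}$ weakly in $L^{2}$.

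To identify $g$, I would first pass to the limit in the vanishing-dissipation estimate: by weak lower semicontinuity, together with $\rho^{s_{n}}\to\overline{\rho_{0}}$ and $u^{s_{n}}\to u_{c}$, one obtains $\sqrt{\overline{\rho_{0}}}\big((u_{c}-v)\sqrt{g}-2(\sqrt{g})_{v}\big)=0$ a.e., hence $\sqrt{g}(x,v,t)=a(x,t)e^{-|v-u_{c}|^{2}/4}$ with $a\ge 0$, i.e. $g=\rho_{g}(x,t)e^{-|v-u_{c}|^{2}/2}$, $\rho_{g}=a^{2}$. Plugging this into the weak formulation of $(\ref{m1})_{3}$ for the limit and using $(u_{c}-v)g=g_{v}$, which annihilates the Fokker--Planck term, $g$ satisfies $\partial_{t}g+v\partial_{x}g=0$, so $\partial_{t}\rho_{g}+v\partial_{x}\rho_{g}=0$ for a.e. $v$, whence $\rho_{g}$ is a constant $c\ge0$; the particle mass conservation $(\ref{massmomentum})_{2}$ passes to the limit and gives $c\sqrt{2\pi}=\overline{n_{0}}$, i.e. $g=M_{\overline{n_{0}},u_{c}}$. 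Since the limit does not depend on the subsequence, $f^{s}\to M_{\overline{n_{0}},u_{c}}$ in $L^{1}((0,1)\times\mathbb{T}\times\mathbb{R})$ as $s\to\infty$, that is $\int_{t-1}^{t}\|(f-M_{\overline{n_{0}},u_{c}})(\tau)\|_{L^{1}(\mathbb{T}\times\mathbb{R})}\,d\tau\to0$. This averaged convergence is then upgraded to $\|(f-M_{\overline{n_{0}},u_{c}})(t)\|_{L^{1}(\mathbb{T}\times\mathbb{R})}\to0$ as $t\to\infty$ as in \cite{bouchut1}, using the $s$-uniform equicontinuity of $t\mapsto\int_{\mathbb{T}\times\mathbb{R}}f^{s}\phi\,dvdx$ (a consequence of $(\ref{m1})_{3}$ and the uniform bounds) together with the equi-integrability; this proves $(\ref{b22})_{1}$.

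Finally, $(\ref{b22})_{2}$ follows from $(\ref{b22})_{1}$ by taking moments. Since $(n-\overline{n_{0}})(t)=\int_{\mathbb{R}}(f-M_{\overline{n_{0}},u_{c}})(\cdot,v,t)\,dv$ and, because $\int_{\mathbb{R}}(v-u_{c})M_{\overline{n_{0}},u_{c}}\,dv=0$, also $\big(n(w-u_{c})\big)(t)=\int_{\mathbb{R}}(v-u_{c})(f-M_{\overline{n_{0}},u_{c}})(\cdot,v,t)\,dv$, one has $\|(n-\overline{n_{0}})(t)\|_{L^{1}(\mathbb{T})}\le\|(f-M_{\overline{n_{0}},u_{c}})(t)\|_{L^{1}(\mathbb{T}\times\mathbb{R})}\to0$, and for the first moment I would split the $v$-integral over $\{|v|\le R\}$ and $\{|v|>R\}$, estimating the former by $(R+|u_{c}|)\|(f-M_{\overline{n_{0}},u_{c}})(t)\|_{L^{1}(\mathbb{T}\times\mathbb{R})}$ and the latter by $CR^{-1}$ via the $t$-uniform bound $\sup_{t}\|f(t)\|_{L^{1}_{2}(\mathbb{T}\times\mathbb{R})}\le C$ from $(\ref{basicCNSVFP})_{2}$ and $\||v|^{2}M_{\overline{n_{0}},u_{c}}\|_{L^{1}(\mathbb{R})}<\infty$; letting $t\to\infty$ and then $R\to\infty$ gives $\|\big(n(w-u_{c})\big)(t)\|_{L^{1}(\mathbb{T})}\to0$, which completes $(\ref{b22})_{2}$.
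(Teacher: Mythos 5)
Your route is the paper's route: time shifts $f^{s}$, the uniform bounds from the entropy inequality and Propositions \ref{prop41}--\ref{prop42}, the uniform $L^{2}$ bound on $(\sqrt{f^{s}})_{v}$, hypoelliptic compactness in the spirit of Ars\'enio--Saint-Raymond, identification of the limit through the vanishing dissipation and the free-transport equation, and moment estimates for $(\ref{b22})_{2}$. The identification step (weak lower semicontinuity of the dissipation instead of passing to the limit in the flux $\rho(u-v)f-\rho f_{v}$ in $L^{1}$) and the moment step (cut-off at $|v|\le R$ instead of the paper's Cauchy--Schwarz in $v$, both resting on $\sup_{t}\|f(t)\|_{L^{1}_{2}}\le C$) are correct minor variants.

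Two places are genuinely thinner than what the claim requires. First, the compactness of $\{f^{s}\}$ is asserted by citation, but Lemma \ref{lemma66} cannot be applied to $f^{s}$ itself: its hypotheses ask for a uniform $L^{p}$ bound with $p>1$, local strong compactness in $v$, and a source of the form $(I-\Delta_{t,x})^{\frac{\alpha_{2}}{2}}(I-\Delta_{v})^{\frac{\alpha_{1}}{2}}g^{s}$ with $\alpha_{2}<1$ and $g^{s}$ bounded in $L^{p}$, while the only available $v$-regularity is for $\sqrt{f^{s}}$. One is therefore forced to renormalize to $\sqrt{f^{s}+\delta}$ (Lemma \ref{lemma610}), cut off in $v$, and verify via the Bessel-potential estimates (Lemmas \ref{lemma67}--\ref{lemma68}) that all source terms, including $\rho^{s}|(f^{s})_{v}|^{2}/(f^{s}+\delta)^{3/2}$, are bounded in $L^{\frac{5}{4}}$, and then remove the truncations; this is where most of the paper's proof lives and your outline skips it. Second, your final upgrade from $\int_{s}^{s+1}\|(f-M_{\overline{n_{0}},u_{c}})(\tau)\|_{L^{1}}d\tau\to0$ to $\|(f-M_{\overline{n_{0}},u_{c}})(t)\|_{L^{1}}\to0$ does not follow from the ingredients you list: uniform-in-$s$ equicontinuity of $t\mapsto\int f^{s}\phi\,dvdx$ together with equi-integrability only yields \emph{weak} $L^{1}$ convergence of $f^{s}(t)$ at each fixed $t$ (oscillations in $(x,v)$ concentrated near a time slice are invisible to both), whereas $(\ref{b22})_{1}$ is a norm statement; moreover the entropy-monotonicity device that makes this upgrade work in \cite{bouchut1} is unavailable here (cf.\ Remark \ref{remark12}). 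The paper instead deduces a.e.\ convergence in $(x,v,t)$ from the strong space--time convergence of $\sqrt{f^{s}}$, applies Lemma \ref{lemma65} (weak $L^{1}$ plus a.e.\ implies strong $L^{1}$) to obtain strong slice convergence $f^{s}(t_{0})\to M_{\overline{n_{0}},u_{c}}$ for a.e.\ $t_{0}\in(0,1)$, and then one good $t_{0}$ suffices since $\|(f-M_{\overline{n_{0}},u_{c}})(t)\|_{L^{1}}=\|(f^{t-t_{0}}-M_{\overline{n_{0}},u_{c}})(t_{0})\|_{L^{1}}$; you should replace your equicontinuity argument by this (or an equivalent) mechanism.
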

\begin{proof}

For any $(x,v,t)\in \mathbb{T}\times\mathbb{R}\times(0,1)$ and $s\geq 0$, define
$$
\rho ^{s}(x,t):=\rho (x,t+s),\quad u^{s}(x,t):=u(x,t+s),\quad f^{s}(x,v,t):=f(x,v,t+s),
$$
so that it satisfies
\begin{equation}
\begin{split}
(f^{s})_{t}+v(f^{s})_{x}+\rho^{s}[(u^{s}-v)f^{s}-(f^{s})_{v}]_{v}=0.\label{fs}
\end{split}
\end{equation}
By $(\ref{entropyinequality})$, $(\ref{r1})_{1}$, (\ref{massmomentum}), and $(\ref{lowerweak})$, we have 
\begin{equation}\label{uniformf}
\left\{
\begin{split}
&\|\rho^{s}(t)\|_{L^1(\mathbb{T})}=\|\rho_{0}\|_{L^1(\mathbb{T})},\quad \|f^{s}(t)\|_{L^1(\mathbb{T}\times\mathbb{R})}=\|n_{0}\|_{L^1(\mathbb{T})},\\
&0<\rho_{-}\leq \rho^{s}(x,t)\leq \rho_{+}<\infty,\\
&\sup_{s\geq0}\Big{(}\|f^s\|_{L^{\infty}(0,1;L^1_{2}(\mathbb{T}\times\mathbb{R}))}+ \|f^{s}\log{f^{s}}\|_{L^{\infty}(0,1;L^1(\mathbb{T}\times\mathbb{R}))}+\|u^{s}\|_{L^{2}(0,1;H^{1}(\mathbb{T}))}\\
&\quad~~+\|(u^{s}-v)\sqrt{f^{s}}-2(\sqrt{f^{s}})_{v}\|_{L^2(0,1;L^2(\mathbb{T}\times\mathbb{R}))}\Big{)}<\infty.
\end{split}
\right.
\end{equation}
Thence, one deduces from $(\ref{basicCNSVFP})_{4}$, (\ref{b21jia}), (\ref{b21}), and (\ref{uniformf}) as $s\rightarrow \infty$ that
\begin{equation}
\left\{
\begin{split}
&\rho^{s}\overset{\ast}{\rightharpoonup} \overline{\rho_{0}}\quad\text{ in}~ L^{\infty}(0,1;L^{\infty}(\mathbb{T})),\\
&u^{s}\rightarrow u_{c}\quad\text{ in}~L^2(0,1;H^{1}(\mathbb{T}))\hookrightarrow L^2(0,1;L^{\infty}(\mathbb{T})).\label{strongcompactrhou}
\end{split}
\right.
\end{equation}
By (\ref{uniformf}) and the fact
\begin{equation}\nonumber
\begin{split}
&-4\int_{0}^{1}\int_{\mathbb{T}\times\mathbb{R}}[(u^{s}-v)\sqrt{f^{s}}(\sqrt{f^{s}})_{v}](x,v,t)dvdxdt= -2\|n_{0}\|_{L^1(\mathbb{T})},
\end{split}
\end{equation}
we have
\begin{equation}\nonumber
\begin{split}
&\|(u^{s}-v)\sqrt{f^{s}}-2(\sqrt{f^{s}})_{v}\|_{L^2(0,1;L^2(\mathbb{T}\times\mathbb{R}))}^2\\
&\quad=\|(u^{s}-v)\sqrt{f^{s}}\|_{L^2(0,1;L^2(\mathbb{T}\times\mathbb{R}))}^2+4\|(\sqrt{f^{s}})_{v}\|_{L^2(0,1;L^2(\mathbb{T}\times\mathbb{R}))}^2-2\|n_{0}\|_{L^1(\mathbb{T})},
\end{split}
\end{equation}
which implies
\begin{equation}\label{fsv}
\begin{split}
\sup_{s\geq0}\|(\sqrt{f^{s}})_{v}\|_{L^2(0,1;L^2(\mathbb{T}\times\mathbb{R}))}<\infty.
\end{split}
\end{equation}

We are ready to establish the strong $L^1(\mathbb{T}\times\mathbb{R})$-compactness of $f^{s}(x,v,t)$ for a.e. $t\in(0,1)$. It follows from (\ref{fs}) and Lemma \ref{lemma610} for any $\delta\in (0,1)$ and $s\in (0,\infty)$ that
\begin{equation}\label{sqrtfs}
\begin{split}
&(\sqrt{f^{s}+\delta})_{t}+v(\sqrt{f^{s}+\delta})_{x}+[\rho^{s}(u^{s}-v)(\sqrt{f^{s}+\delta})]_{v}-[\rho^{s}(\sqrt{f^{s}+\delta})_{v}]_{v}\\
&\quad\quad+\frac{\rho^{s}(f^{s}+2\delta)}{2\sqrt{f^{s}+\delta}}-\frac{\rho^{s}|(f^{s})_{v}|^2}{4(f^{s}+\delta)^{\frac{3}{2}}}=0, \quad\text{in}~\mathcal{D}'(\mathbb{T}\times\mathbb{R}\times(0,1)).
\end{split}
\end{equation}
In order to employ Lemma \ref{lemma67} below, for any $(x,v,t)\in\mathbb{R}^3$, we prolong $(\rho^{s},u^{s},f^{s})$ to be zero outside $(0,1)\times\mathbb{R}\times(0,1)$:
\begin{equation}\nonumber
\begin{split}
&(\widetilde{\rho^{s}}(x,t),\widetilde{u^{s}}(x,t),\widetilde{f^{s}}(x,v,t)):=
\begin{cases}
(\rho^{s}(x,t),u^{s}(x,t),f^{s}(x,v,t)),
& \mbox{ $(x,v,t)\in (0,1)\times\mathbb{R}\times (0,1),$ } \\
0,
& \mbox{ $(x,v,t) \notin (0,1)\times\mathbb{R}\times (0,1).$}
\end{cases}
\end{split}
\end{equation}
 According to the uniform estimates (\ref{uniformf}) and Lemma \ref{lemma64}, there is a limit $f^{\infty}$ satisfying $f^{\infty}\in L^{\infty}(0,1;L^1_{2}(\mathbb{T}\times\mathbb{R}))$ and $(\sqrt{f^{\infty}})_{v}\in L^2(0,1;L^2(\mathbb{T}\times\mathbb{R}))$ such that it holds as $s\rightarrow\infty$,
\begin{equation}\label{weakf}
\left\{
\begin{split}
&\widetilde{f^{s}}\rightharpoonup \widetilde{f^{\infty}}\quad\text{in $L^1(\mathbb{R}^2)$}\quad \text{ a.e.}~t\in \mathbb{R},\\
&(\sqrt{\widetilde{f^{s}}})_{v}\rightharpoonup (\sqrt{\widetilde{f^{\infty}}})_{v}\quad\text{ in $L^2(\mathbb{R}^3)$},
\end{split}
\right.
\end{equation}
where $\widetilde{f^{\infty}}$ is given by
\begin{eqnarray}\nonumber
\widetilde{f^{\infty}}(x,v,t):=
\begin{cases}
f^{\infty}(x,v,t),
& \mbox{ $(x,v,t)\in (0,1)\times\mathbb{R}\times (0,1),$ } \\
0,
& \mbox{ $(x,v,t) \notin(0,1)\times\mathbb{R}\times (0,1).$}
\end{cases}
\end{eqnarray}
Denote
\begin{equation}\label{vartheta1}
\begin{split}
\vartheta^{s,\delta,r}(t,x,v):=\mathbf{1}_{|v|\leq r}\sqrt{\widetilde{f^{s}}(x,v,t)+\delta},\quad s>0, \quad \delta\in(0,1),\quad r>0,
\end{split}
\end{equation}
where $\mathbf{1}_{|v|\leq r}\in \mathcal{D}(\mathbb{R})$ stands for the cut-off function satisfying $\mathbf{1}_{|v|\leq r}=1$ for $|v|\leq r$ and $\mathbf{1}_{|v|\leq r}=0$ for $|v|\geq 2r$. Obviously, $\vartheta^{s,\delta,r}$ has compact support in $(-2r,2r)\times(0,1)\times(0,1)$, and we obtain by (\ref{sqrtfs}) that 
\begin{equation}\label{vartheta}
\begin{split}
&(\vartheta^{s,\delta,r})_{t}+v(\vartheta^{s,\delta,r})_{v}=(I-\partial_{tt}^2-\partial_{xx}^2)^{\frac{9}{22}}(I-\partial_{vv}^2)^{\frac{31}{22}}(g_{1}^{s,\delta}+g_{2}^{s,\delta})\quad\text{in}~\mathcal{D}'(\mathbb{R}^3),
\end{split}
\end{equation}
with
\begin{equation}\nonumber
\left\{
\begin{split}
&g_{1}^{s,\delta}:=(I-\partial_{tt}^2-\partial_{xx}^2)^{-\frac{9}{22}}(I-\partial_{vv}^2)^{-\frac{31}{22}}\partial_{vv}^2(\widetilde{\rho^{s}}\vartheta^{s,\delta,r}),\\
&g_{2}^{s,\delta}:=(I-\partial_{tt}^2-\partial_{xx}^2)^{-\frac{9}{22}}(I-\partial_{vv}^2)^{-\frac{31}{22}}\Big{(}-(\mathbf{1}_{|v|\leq r})_{v}\widetilde{\rho^s}(\widetilde{u^s}-v)\sqrt{\widetilde{f^s}+\delta}\\
&\quad\quad\quad-(\mathbf{1}_{|v|\leq r})_{vv}\widetilde{\rho^{s}}\sqrt{\widetilde{f^{s}}+\delta}-2(\mathbf{1}_{|v|\leq r})_{v}\widetilde{\rho^s} (\sqrt{\widetilde{f^s}+\delta})_{v}-\frac{\mathbf{1}_{|v|\leq r}\widetilde{\rho^{s}}(\widetilde{f^{s}}+2\delta)}{2\sqrt{\widetilde{f^{s}}+\delta}}+\frac{\mathbf{1}_{|v|\leq r}\widetilde{\rho^{s}}|(\widetilde{f^{s}})_{v}|^2}{4(\widetilde{f^{s}}+\delta)^{\frac{3}{2}}}\Big{)}.
\end{split}
\right.
\end{equation}
By virtue of (\ref{uniformf}) and Lemmas \ref{lemma67}-\ref{lemma68}, we have
\begin{align}
&\|g_{1}^{s,\delta}\|_{L^{\frac{5}{4}}(\mathbb{R}^3)}\leq C_{r}\|(I-\partial_{tt}^2-\partial_{xx}^2)^{-\frac{9}{22}}(I-\partial_{vv}^2)^{-\frac{9}{22}}(\widetilde{\rho^{s}}\vartheta^{s,\delta,r})\|_{L^{\frac{5}{4}}(\mathbb{R}^3)}\nonumber\\
&\quad\quad\quad\quad\quad~\leq C_{r}\|(I-\partial_{tt}^2-\partial_{xx}^2-\partial_{vv}^2)^{-\frac{9}{22}}(\widetilde{\rho^{s}}\vartheta^{s,\delta,r})\|_{L^{\frac{11}{8},\infty}(\mathbb{R}^3)}\nonumber\\
&\quad\quad\quad\quad\quad~\leq C_{r}\|\widetilde{\rho^{s}}\vartheta^{s,\delta,r}\|_{L^{1}(\mathbb{R}^3)}\leq C_{r}\|\rho^{s}\|_{L^{\infty}(0,1;L^{\infty}(\mathbb{T}))}\|f^{s}\|_{L^{\infty}(0,1;L^1(\mathbb{T}\times\mathbb{R}))}^{\frac{1}{2}}\leq C_{r},\label{g1e}
\end{align}
where $C_{r}>0$ denotes a sufficiently large constant only dependent of $r>0$.  Similarly, it follows by (\ref{uniformf}), (\ref{fsv}), and Lemmas \ref{lemma67}-\ref{lemma68} that
\begin{align}
&\|g_{2}^{s,\delta}\|_{L^{\frac{5}{4}}(\mathbb{R}^3)}\leq C_{r}\|\rho^{s}\|_{L^{\infty}(0,1;L^{\infty}(\mathbb{T}))}\big{(}\|u^{s}\|_{L^2(0,1;L^{\infty}(\mathbb{T}))}+1\big{)}\Big{(}\|f^{s}\|_{L^{\infty}(0,1;L^1(\mathbb{T}\times\mathbb{R}))}^{\frac{1}{2}}\nonumber\\
&\quad\quad\quad\quad\quad~\quad+\|(\sqrt{f^{s}})_{v}\|_{L^2(0,1;L^2(\mathbb{T}\times\mathbb{R}))}+\delta^{-\frac{1}{2}}\|(\sqrt{f^{s}})_{v}\|_{L^2(0,1;L^2(\mathbb{T}\times\mathbb{R}))}^2\Big{)}\leq C_{r}.\label{g2e}
\end{align}
Due to the uniform estimate (\ref{fsv}), the sequence $\vartheta^{s,\delta,r}$ given by (\ref{vartheta}) is locally strongly compact with respect to $v$ in $L^{\frac{5}{4}}(\mathbb{R}^3)$. Thus, we make use of Lemma \ref{lemma66} below, (\ref{uniformf}), (\ref{fsv}), $(\ref{weakf})$, and (\ref{g1e})-(\ref{g2e}) to derive
\begin{equation}\nonumber
\begin{split}
&\vartheta^{s,\delta,r} \rightarrow \mathbf{1}_{|v|\leq r}\sqrt{\widetilde{f^{\infty}}+\delta}\quad\text{in}~L^{\frac{5}{4}}_{loc}(\mathbb{R}^3),\quad\delta\in(0,1),\quad r>0,\quad \text{as}~s\rightarrow\infty,
\end{split}
\end{equation}
which implies for any $\delta\in(0,1)$ and $r>0$ that
\begin{equation}\label{strongvartheta}
\begin{split}
 \mathbf{1}_{|v|\leq r}\sqrt{f^{s}+\delta}\rightarrow  \mathbf{1}_{|v|\leq r}\sqrt{f^{\infty}+\delta}\quad\text{in}~L^{\frac{5}{4}}(\mathbb{T}\times\mathbb{R}\times(0,1)),\quad \text{as}~s\rightarrow\infty.
\end{split}
\end{equation}
In addition, we have
\begin{align}
&\|\sqrt{f^{s}}-\sqrt{f^{\infty}}\|_{L^1(\mathbb{T}\times\mathbb{R}\times(0,1))}\nonumber\\
&\quad\leq \| \mathbf{1}_{|v|\leq r}(\sqrt{f^{s}+\delta}- \sqrt{f^{\infty}+\delta})\|_{L^1(\mathbb{T}\times\mathbb{R}\times(0,1))}+\|\sqrt{f^{s}}- \mathbf{1}_{|v|\leq r}\sqrt{f^{s}+\delta}\|_{L^1(\mathbb{T}\times\mathbb{R}\times(0,1))}\nonumber\\
&\quad\quad+\| \mathbf{1}_{|v|\leq r}\sqrt{f^{\infty}+\delta}-\sqrt{f^{\infty}}\|_{L^1(\mathbb{T}\times\mathbb{R}\times(0,1))}.\label{sqrtflimit1}
\end{align}
To control the right-hand side of (\ref{sqrtflimit1}), we deduce by (\ref{uniformf}) and (\ref{weakf})-(\ref{vartheta1}) that
\begin{align}
&\|\sqrt{f^{s}}- \mathbf{1}_{|v|\leq r}\sqrt{f^{s}+\delta}\|_{L^1(\mathbb{T}\times\mathbb{R}\times(0,1))}\nonumber\\
&\quad= \| (1-\mathbf{1}_{|v|\leq r}) \sqrt{f^{s}}+\mathbf{1}_{|v|\leq r}( \sqrt{f^{s}}-\sqrt{f^{s}+\delta})\|_{L^1(\mathbb{T}\times\mathbb{R}\times(0,1))}\nonumber\\
&\quad\leq \frac{1}{r}\|\langle v\rangle \sqrt{f^{s}}\|_{L^1(0,1;L^1(\mathbb{T}\times\mathbb{R}))}+ \|\mathbf{1}_{|v|\leq r} \frac{\delta}{\sqrt{f^{s}}+\sqrt{f^{s}+\delta}}\|_{L^1(\mathbb{T}\times\mathbb{R}\times(0,1))}\nonumber\\
&\quad\leq \frac{1}{r}\Big{(}\int_{\mathbb{R}}\frac{1}{\langle v\rangle^2}dv\Big{)}^{\frac{1}{2}}\|f^{s}\|_{L^{\infty}(0,1;L^1_{2}(\mathbb{T}\times\mathbb{R}))}^{\frac{1}{2}}+C_{r}\sqrt{\delta}\leq C_{r}\sqrt{\delta}+\frac{C_{2}}{r},\label{bbdr}
\end{align}
where $C_{2}>0$ stands for a constant uniformly in $s, \delta$, and $r$, and $C_{r}>0$ is a constant only dependent of $r>0$. Similarly, it holds
\begin{equation}\label{bbdr1}
\begin{split}
&\|\sqrt{f^{\infty}}- \mathbf{1}_{|v|\leq r}\sqrt{f^{\infty}+\delta}\|_{L^1(\mathbb{T}\times\mathbb{R}\times(0,1))}\leq C_{r}\sqrt{\delta}+\frac{C_{2}}{r}.
\end{split}
\end{equation}
Substituting (\ref{bbdr})-(\ref{bbdr1}) into (\ref{sqrtflimit1}) and using  (\ref{strongvartheta}), we obtain
\begin{equation}\nonumber
\begin{split}
&\lim_{s\rightarrow\infty}\|\sqrt{\widetilde{f^{s}}}-\sqrt{\widetilde{f^{\infty}}}\|_{L^1(\mathbb{R}^3)}\\
&\quad\leq\lim_{r\rightarrow\infty}\lim_{\delta\rightarrow0}\lim_{s\rightarrow\infty}\Big{(}C_{2}r^{\frac{1}{5}}\| \mathbf{1}_{|v|\leq r}(\sqrt{f^{s}+\delta}- \sqrt{f^{\infty}+\delta})\|_{L^{\frac{5}{4}}(\mathbb{R}^3)}+C_{r}\sqrt{\delta}+\frac{C_{2}}{r}\Big{)}=0,
\end{split}
\end{equation}
which gives rise to
\begin{equation}\label{fae}
\begin{split}
f^{s}\rightarrow f^{\infty}\quad\text{a.e. in}~\mathbb{T}\times\mathbb{R}\times(0,1),\quad\text{as}~s\rightarrow\infty.
\end{split}
\end{equation}
By virtue of (\ref{weakf}), (\ref{fae}), and Lemma \ref{lemma65} below, we have
\begin{equation}\label{strongf}
\begin{split}
&f^{s}(x,v,t)\rightarrow f^{\infty}(x,v,t)\quad \text{ in}~ L^1(\mathbb{T}\times\mathbb{R}),\quad \text{a.e.}~t\in(0,1),\quad\text{as}~s\rightarrow\infty.
\end{split}
\end{equation}

We are going to show $f^{\infty}(x,v,t)=M_{\overline{n_{0}}, u_{c}}(v)$. By (\ref{entropyinequality}), $(\ref{r1})_{1}$, and $(\ref{massmomentum})_{2}$, we have 
\begin{align}
&\lim_{t\rightarrow\infty}\|\rho^{s}(u^{s}-v)f^{s}-\rho^{s}(f^{s})_{v}\|_{L^1(0,1;L^1(\mathbb{T}\times\mathbb{R}))}\nonumber\\
&\quad=\lim_{t\rightarrow\infty}\|\rho(u-v)f-\rho f_{v}\|_{L^1(s,s+1;L^1(\mathbb{T}\times\mathbb{R}))}\nonumber\\
&\quad\leq \rho_{+}^{\frac{1}{2}}\|f_{0}\|_{L^1(\mathbb{T}\times\mathbb{R})}^{\frac{1}{2}} \lim_{t\rightarrow\infty}\|\sqrt{\rho}\big{(}(u-v)\sqrt{f}-(\sqrt{f})_{v}\big{)}\|_{L^2(s,s+1;L^2(\mathbb{T}\times\mathbb{R}))}= 0.\label{fffff}
\end{align}
Due to (\ref{fs}), (\ref{strongcompactrhou}), (\ref{weakf}), and (\ref{fffff}), the limit $f^{\infty}$ satisfies
\begin{equation}\label{equationf}
\left\{
\begin{split}
&(f^{\infty})_{v}-(u_{c}-v)f^{\infty}= e^{-\frac{|v-u_{c}|^2}{2}}\big{(}e^{\frac{|v-u_{c}|^2}{2}}f^{\infty}\big{)}_{v}=0\quad\text{in}~\mathcal{D}'(\mathbb{T}\times\mathbb{R}\times(0,1)),\\
&(f^{\infty})_{t}+v(f^{\infty})_{x}=0\quad \text{in}~\mathcal{D}'(\mathbb{T}\times\mathbb{R}\times(0,1)).
\end{split}
\right.
\end{equation}
With the help of $(\ref{equationf})_{1}$, we deduce that exists a function $g^{\infty}=g^{\infty}(x,t)\in L^{\infty}(0,1;L^1(\mathbb{T}))$ satisfying
\begin{equation}\label{fg}
\begin{split}
g^{\infty}(x,t):=e^{\frac{|v-u_{c}|^2}{2}}f^{\infty}(x,v,t).
\end{split}
\end{equation}
And by virtue of $(\ref{equationf})_{2}$ and (\ref{fg}), we have for any $\phi\in \mathcal{D}(\mathbb{T}\times(0,1))$ and $\chi\in\mathcal{D}(\mathbb{R})$ that
\begin{equation}\label{chichi}
\begin{split}
\int_{0}^{1}\int_{\mathbb{T}\times\mathbb{R}} g^{\infty}(x,t)\big{(}\phi_{t}(x,t)+v\phi_{x}(x,t)\big{)}\chi(v) dvdxdt=0.
\end{split}
\end{equation}
By a density argument,  (\ref{chichi}) indeed holds for any $\chi\in \mathcal{S}(\mathbb{R})$. Thus, we choose $\chi(v)=e^{-|v|^2}$ and $\chi(v)=ve^{-|v|^2}$ in (\ref{chichi}) to derive
\begin{equation}\nonumber
\begin{split}
&g^{\infty}_{t}(x,t)=g^{\infty}_{x}(x,t)=0\quad\text{in}~\mathcal{D}(\mathbb{T}\times(0,1)),
\end{split}
\end{equation}
 which together with  $(\ref{massmomentum})_{2}$ shows $g^{\infty}=\frac{\overline{n_{0}}}{\sqrt{2\pi}}$. Thus, we conclude that the unique formula of $f^{\infty}$ is $M_{\overline{n_{0}},u_{c}}(v)$.

Then, we have $(\ref{b22})_{1}$. Indeed, if $f^{s}(t)$ does not converge to $M_{\overline{n_{0}},u_{c}}$ in $L^1(\mathbb{T}\times\mathbb{R})$ for any $t\in(0,1)$ as $s\rightarrow\infty$, then there is a constant $\delta>0$ such that it holds
   \begin{equation}
\begin{split}
\|(f^{s}-M_{\overline{n_{0}},u_{c}})(t)\|_{L^1(\mathbb{T}\times\mathbb{R})}\geq\delta,\quad \text{as}~s\rightarrow\infty,\nonumber
\end{split}
\end{equation}
which contradicts (\ref{strongf}).

In addition, it holds
\begin{equation}\label{nb}
\begin{split}
&\lim_{t\rightarrow\infty}\|(n-\overline{n_{0}})(t)\|_{L^1(\mathbb{T})}\leq \lim_{t\rightarrow\infty}\|(f-M_{\overline{n_{0}},u_{c}})(t)\|_{L^1(\mathbb{T}\times\mathbb{R})}=0.
\end{split}
\end{equation}
One concludes from (\ref{nw})-(\ref{entropyinequality}) and $(\ref{b22})_{1}$ that
\begin{align}
&\lim_{t\rightarrow\infty}\|(nw-\overline{n_{0}}u_{c})(t)\|_{L^1(\mathbb{T})}\leq \lim_{t\rightarrow\infty}\| |v|^2(f+M_{\overline{n_{0}},u_{c}})(t)\|_{L^1(\mathbb{T}\times\mathbb{R})}^{\frac{1}{2}}\| (f-M_{\overline{n_{0}},u_{c}})(t)\|_{L^1(\mathbb{T}\times\mathbb{R})}^{\frac{1}{2}}=0.\label{nwb}
\end{align}
The combination of $(\ref{b22})_{1}$ and $(\ref{nb})$-(\ref{nwb}) gives rise to
\begin{equation}\nonumber
\begin{split}
&\lim_{t\rightarrow\infty}\|(n|w-u_{c}|)(t)\|_{L^1(\mathbb{T})}\leq \lim_{t\rightarrow\infty}\big{(} \|(nw-\overline{n_{0}}u_{c})(t)\|_{L^1(\mathbb{T})}+|u_{c}|\| (n-\overline{n_{0}})(t)\|_{L^1(\mathbb{T})} \big{)}= 0.
\end{split}
\end{equation}
The proof of Proposition \ref{prop43} is completed.
\end{proof}

\section{Proofs of main results}

\underline{\it\textbf{Proof of Theorem \ref{theorem11}:}} \emph{~Step~1: Construction of approximate sequence.}
For any $\varepsilon\in(0,1)$, we regularize the initial data as follows:
\begin{equation}\label{weakdata}
\begin{split}
&(\rho_{0}^{\varepsilon}(x),u_{0}^{\varepsilon}(x),f_{0}^{\varepsilon}(x,v)):=(J_{1}^{\varepsilon}\ast\rho_{0}(x)+\varepsilon,\frac{J_{1}^{\varepsilon}\ast(\frac{m_{0}}{\sqrt{\rho_{0}}})(x)}{\sqrt{J_{1}^{\varepsilon}\ast\rho_{0}(x)+\varepsilon}},J_{1}^{\varepsilon}\ast J_{2}^{\varepsilon}\ast(f_{0}\mathbf{1}_{|v|\leq \varepsilon^{-1}})(x,v)),
\end{split}
\end{equation}
where $J_{1}^{\varepsilon}(x)$ and $J_{2}^{\varepsilon}(v)$ are the Friedrichs mollifier with respect to the variables $x$ and $v$, and $\mathbf{1}_{|v|\leq \varepsilon^{-1}}\in \mathcal{D}(\mathbb{R})$ is the cut-off function satisfying $\mathbf{1}_{|v|\leq \varepsilon^{-1}}=1$ for $|v|\leq r$ and $\mathbf{1}_{|v|\leq \varepsilon^{-1}}=0$ for $|v|\geq 2\varepsilon^{-1}$. It is easy to verify that $(\rho_{0}^{\varepsilon},\rho_{0}^{\varepsilon}u_{0}^{\varepsilon},f_{0}^{\varepsilon})$ satisfies the assumptions $(\ref{a1})$ of Theorem \ref{theorem11} uniformly in $\varepsilon\in(0,1)$.

We are able to obtain a local regular solution $(\rho^{\varepsilon},u^{\varepsilon},f^{\varepsilon})$ for any $\varepsilon\in(0,1)$ to the IVP $(\ref{m1})$-$(\ref{kappa})$ for the initial data (\ref{weakdata}) in a standard way based on linearization techniques \cite{lhl2}, the details are omitted.

Then, by virtue of Remark \ref{remark13}, we can extend the local regular solution $(\rho^{\varepsilon},u^{\varepsilon},f^{\varepsilon})$ to a global one.

\emph{Step~2: Compactness and convergence.}
Let $T>0$ be any given time. It follows from the a-priori estimates established in Lemmas \ref{lemma22}-\ref{lemma23} and standard arguments as in \cite{lions2} that there exist a limit $(\rho,u, f)$ such that up to a subsequence (still labeled by $(\rho^{\varepsilon},u^{\varepsilon},f^{\varepsilon})$ here and in what follows), it holds as $\varepsilon\rightarrow 0$ that
\begin{equation}
\left\{
\begin{split}
&(\rho^{\varepsilon}, f^{\varepsilon})\overset{\ast}{\rightharpoonup}(\rho,f)\quad\text{ in}~ L^{\infty}(0,T;L^{\infty}(\mathbb{T}))\times L^{\infty}(0,T;L^{\infty}(\mathbb{T}\times\mathbb{R})),\\
&u^{\varepsilon}\rightharpoonup u\quad\text{ in}~ L^{2}(0,T;H^{1}(\mathbb{T})),\\
&\rho^{\varepsilon}\rightarrow \rho\quad \text{ in}~C([0,T];L^{p}_{weak}(\mathbb{T}))\cap C([0,T];H^{-1}(\mathbb{T})),\quad p\in (1,\infty),\\
&\rho^{\varepsilon}u^{\varepsilon}\rightarrow \rho u\quad\text{ in}~ C([0,T];L^2_{weak}(\mathbb{T}))\cap C([0,T];H^{-1}(\mathbb{T})),\\
&\rho^{\varepsilon}|u^{\varepsilon}|^2\rightarrow  \rho |u|^2 \quad\text{in}~ \mathcal{D}(\mathbb{T}\times(0,T)),\label{limit1}
\end{split}
\right.
\end{equation}
where $C([0,T];X_{weak})$ is the space of weak topology on $X$ defined by
\begin{equation}\nonumber
\begin{split}
C([0,T];X_{weak}):=\big{\{} g:[0,T]\rightarrow X~\big{|}~<g,\phi>_{X,X^{*}}\in C([0,T])~\text{for any $\phi\in X^{*}$}\big{\}}.
\end{split}
\end{equation}
 In addition, it follows from Lemmas \ref{lemma22}-\ref{lemma23}, $(\ref{m1})_{3}$, and Lemma \ref{lemma63} for any $\chi(v)\in\mathcal{D}(\mathbb{R})$ satisfying $|\chi(v)|\leq C(1+|v|)$ that
\begin{equation}\label{compactmoments}
\begin{split}
&\int_{\mathbb{R}}f^{\varepsilon}\chi(v)dv\rightarrow\int_{\mathbb{R}}f \chi(v)dv\quad\text{ in}~L^{1}(0,T;L^{1}(\mathbb{T})), \quad\text{as}~\varepsilon\rightarrow0,
\end{split}
\end{equation}
which together with $(\ref{N1time})_{3}$ implies as $\varepsilon\rightarrow0$ that 
\begin{equation}\label{strongnj}
\left\{
\begin{split}
&n^{\varepsilon}:=\int_{\mathbb{R}}f^{\varepsilon}dv\rightarrow n=\int_{\mathbb{R}}fdv\quad\text{ in}~ L^{q}(0,T;L^{p}(\mathbb{T})),\quad q\in[1,\infty),~p\in [1,4),\\
&n^{\varepsilon}w^{\varepsilon}:=\int_{\mathbb{R}}f^{\varepsilon}vdv\rightarrow nw=\int_{\mathbb{R}}fvdv~\text{in} ~L^{q}(0,T;L^{p}(\mathbb{T})),\quad q\in[1,\infty),~p\in [1,2).
\end{split}
\right.
\end{equation}

Then,  we claim that it holds
 \begin{equation}
 \begin{split}
 \rho^{\varepsilon}\rightarrow  \rho\quad\text{strongly in}~L^{p}(0,T;L^{p}(\mathbb{T})),\quad \text{as}~\varepsilon\rightarrow 0,\quad p\in[1,\infty).\label{limit7}
 \end{split}
 \end{equation}
 The proof of (\ref{limit7}) is similar to the arguments as used in \cite{huang1,lions2}. Indeed, let~$\underline{g}$~denote a weak (weak$^{*}$) limit of any sequence $g^{\varepsilon}$, and $\mathcal{G}^{\varepsilon}$ be the effect viscous flux defined as
\begin{equation}\label{effectflux}
\begin{split}
\mathcal{G}^{\varepsilon}:=(\rho^{\varepsilon})^{\gamma}-\mu(\rho^{\varepsilon})(u^{\varepsilon})_{x}.
\end{split}
\end{equation}
It follows from (\ref{N1timew}) as $\varepsilon\rightarrow0$ that
\begin{align}
&\mathcal{G}^{\varepsilon}\overset{\ast}{\rightharpoonup} \mathcal{G}:=\underline{\rho^{\gamma}}-\underline{\mu(\rho)u_{x}}\quad\text{weakly* in}~ L^{p}(0,T;L^{\infty}(\mathbb{T})),\quad p\in (1,\frac{4}{3}).\label{limit6}
\end{align}
Multiplying $(\ref{m1})_{1}$ by $b'(\rho^{\varepsilon})$ for any $b(s)\in C^{1}(\mathbb{R})$, we have
 \begin{equation}\label{rho2varepsilon}
 \begin{split}
(b(\rho^{\varepsilon}))_{t}+(u^{\varepsilon} b(\rho^{\varepsilon}) )_{x}+\big{(}b'(\rho^{\varepsilon})\rho^{\varepsilon}-b(\rho^{\varepsilon})\big{)}(u^{\varepsilon})_{x}=0\quad\text{in}~\mathcal{D}'(\mathbb{T}\times (0,T)).
  \end{split}
 \end{equation}
Due to $(\ref{limit1})_{2}$ and the fact 
\begin{equation}\label{rhol2weak}
\begin{split}
&b(\rho^{\varepsilon}) \rightarrow \underline{b(\rho)}\quad\text{ in}~C([0,T];H^{-1}(\mathbb{T}))\quad\text{as}~\varepsilon\rightarrow0,
\end{split}
\end{equation}
derived from (\ref{basicCNSVFP}), (\ref{rho2varepsilon}), and Lemma \ref{lemma62}, we take the limit in $(\ref{rho2varepsilon})$ as $\varepsilon\rightarrow 0$ to get
 \begin{equation}\label{rho2p}
 \begin{split}
 &(\underline{b(\rho)})_{t}+(\underline{b(\rho)}u)_{x}+\underline{(b'(\rho)\rho-b(\rho))u_{x}}=0\quad\text{in}~\mathcal{D}'(\mathbb{T}\times (0,T)).
 \end{split}
 \end{equation}
By the equations (\ref{newm2}) and (\ref{rho2varepsilon}) for $(\rho^{\varepsilon},u^{\varepsilon},f^{\varepsilon})$, we can prove
 \begin{equation}\label{Gvarepsilon}
 \begin{split}
 &b(\rho^{\varepsilon})\mathcal{G}^{\varepsilon}=b(\rho^{\varepsilon})\big{(}(\rho^{\varepsilon})^{\gamma}-\mu(\rho^{\varepsilon})(u^{\varepsilon})_{x}\big{)}\\
 &\quad\quad\quad~=b(\rho^{\varepsilon})\Big{(}-\big{[}\mathcal{I}\big{(}\rho^{\varepsilon}u^{\varepsilon}+\rho^{\varepsilon}\mathcal{I}(n^{\varepsilon})\big{)}\big{]}_{t}-u^{\varepsilon}\big{(} \mathcal{I}\big{(}\rho^{\varepsilon}u^{\varepsilon}+\rho^{\varepsilon}\mathcal{I}(n^{\varepsilon})\big{)}\big{)}_{x}\\
 &\quad\quad\quad\quad~+\mathcal{I}(\rho^{\varepsilon})\int_{0}^{1}n^{\varepsilon}w^{\varepsilon}(y,t)dy+\int_{0}^{1}\big{[}\rho^{\varepsilon}|u^{\varepsilon}|^2+\rho^{\varepsilon}u^{\varepsilon}\mathcal{I}n^{\varepsilon}+(\rho^{\varepsilon})^{\gamma}-\mu(\rho^{\varepsilon})(u^{\varepsilon})_{x}\big{]}(y,t)dx\Big{)}\\
 &\quad\quad\quad~=-\big{[}(b(\rho^{\varepsilon})\mathcal{I}\big{(}\rho^{\varepsilon}u^{\varepsilon}+\rho^{\varepsilon}\mathcal{I}(n^{\varepsilon})\big{)}\big{]}_{t}-\big{[}(b(\rho^{\varepsilon})u^{\varepsilon}\mathcal{I}\big{(}\rho^{\varepsilon}u^{\varepsilon}+\rho^{\varepsilon}\mathcal{I}(n^{\varepsilon})\big{)}\big{]}_{x}\\
 &\quad\quad\quad\quad+\big{(}b(\rho^{\varepsilon})-b'(\rho^{\varepsilon})\rho^{\varepsilon}\big{)}(u^{\varepsilon})_{x}\mathcal{I}\big{(}\rho^{\varepsilon}u^{\varepsilon}+\rho^{\varepsilon}\mathcal{I}(n^{\varepsilon})\big{)}+b(\rho^{\varepsilon})\mathcal{I}(\rho^{\varepsilon})\int_{0}^{1}n^{\varepsilon}w^{\varepsilon}(y,t)dy\\
 &\quad\quad\quad\quad+b(\rho^{\varepsilon})\int_{0}^{1}\big{[}\rho^{\varepsilon}|u^{\varepsilon}|^2+\rho^{\varepsilon}u^{\varepsilon}\mathcal{I}(n^{\varepsilon})+(\rho^{\varepsilon})^{\gamma}-\mu(\rho^{\varepsilon})(u^{\varepsilon})_{x}\big{]}(y,t)dx.
 \end{split}
 \end{equation}
 Combining (\ref{j}), (\ref{limit1})-(\ref{strongnj}) and (\ref{effectflux})-(\ref{Gvarepsilon}) together, for any $b(s)\in C^1(\mathbb{R})$, we can prove the following property of effective viscous flux:
\begin{equation}
\begin{split}
&\underline{b(\rho)\mathcal{G}}=-\big{[}\underline{b(\rho)}\mathcal{I}\big{(}\rho u+\rho\mathcal{I}(n)\big{)}\big{]}_{t}-\big{[}\underline{b(\rho)}u\mathcal{I}\big{(}\rho u+\rho\mathcal{I}(n)\big{)}\big{]}_{x}\\
&\quad\quad\quad~+\underline{\big{(}b(\rho)-b'(\rho)\rho\big{)}u_{x}}\mathcal{I}\big{(}\rho u+\rho\mathcal{I}(n)\big{)}+\underline{b(\rho)}\mathcal{I}(\rho)\int_{0}^{1}nw(y,t)dy\\
&\quad\quad\quad~+\underline{b(\rho)}\int_{0}^{1}\big{[}\rho|u|^2+\rho u\mathcal{I}(n)+\underline{\rho^{\gamma}}-\underline{\mu(\rho)u_{x}}\big{]}(x,t)dx=\underline{b(\rho)}\mathcal{G},\quad\text{in}~\mathcal{D}(\mathbb{T}\times(0,T)).\label{limitG1}
\end{split}
\end{equation}
By $(\ref{limit1})_{1}$, (\ref{rhol2weak}) and the fact
\begin{equation}\label{gumu}
\begin{split}
(u^{\varepsilon})_{x}=-\mathcal{G}^{\varepsilon}\mu(\rho^{\varepsilon})^{-1}+(\rho^{\varepsilon})^{\gamma}\mu(\rho^{\varepsilon})^{-1},
\end{split}
\end{equation}
we have
\begin{equation}\label{ux1}
\begin{split}
&u_{x}=-\mathcal{G}\underline{\mu(\rho)^{-1}}+\underline{\rho^{\gamma}\mu(\rho)^{-1}}\quad \text{a.e. in}~\mathbb{T}\times(0,T).
\end{split}
\end{equation}
Since the equation $(\ref{rho2varepsilon})$ for $b(s)=s^2$ can be re-written as
\begin{equation}\label{gumu1}
\begin{split}
[(\rho^{\varepsilon})^{2}]_{t}+[(\rho^{\varepsilon})^{2}u^{\varepsilon}]_{x}=\mathcal{G}^{\varepsilon}(\rho^{\varepsilon})^2\mu(\rho^{\varepsilon})^{-1}-(\rho^{\varepsilon})^{\gamma+2}\mu(\rho^{\varepsilon})^{-1},
\end{split}
\end{equation}
we obtain by $(\ref{limit1})_{1}$, (\ref{limit6}), (\ref{rhol2weak}) and (\ref{limitG1})-(\ref{gumu1}) that
\begin{equation}\label{rho2eq}
\begin{split}
(\underline{\rho^2})_{t}+(\underline{\rho^2}u)_{x}=\mathcal{G}\underline{\rho^2\mu(\rho)^{-1}}-\underline{\rho^{\gamma+2}\mu(\rho)^{-1}}\quad \text{in}~\mathcal{D}'(\mathbb{T}\times(0,T)).
\end{split}
\end{equation}
We deduce from Lemma \ref{lemma69} that the equation $(\ref{m1})_{1}$ is satisfied in the sense of renormalized solutions, and therefore it holds by (\ref{ux1}) that
\begin{equation}\label{rrho2}
\begin{split}
&\rho\in C([0,T];L^{\gamma}(\mathbb{T})),\quad (\rho^2)_{t}+(\rho^2u)_{x}=-\rho^2 u_{x}=\mathcal{G} \rho^2\underline{\mu(\rho)^{-1}}-\rho^2\underline{\rho^{\gamma}\mu(\rho)^{-1}} \quad \text{in}~\mathcal{D}'(\mathbb{T}\times(0,T)).
\end{split}
\end{equation}
It follows from (\ref{rho2eq})-(\ref{rrho2}) for $\Psi:=\underline{\rho^{2}}-\rho^2$ that
\begin{equation}
\begin{split}
&\Psi_{t}+(\Psi u)_{x}=\mathcal{G}(\underline{\rho^2\mu(\rho)^{-1}}-\rho^2\mu(\rho)^{-1})+\mathcal{G}\rho^2(\mu(\rho)^{-1}-\underline{\mu(\rho)^{-1}})\\
&\quad\quad\quad\quad\quad\quad-(\underline{\rho^{\gamma+2}\mu(\rho)^{-1}}-\rho^{\gamma+2}\mu(\rho)^{-1})-\rho^2(\rho^{\gamma}\mu(\rho)^{-1}-\underline{\rho^{\gamma}\mu(\rho)^{-1}})\quad\text{in}~\mathcal{D}'(\mathbb{T}\times(0,T)),\label{limitrhoerror}
\end{split}
\end{equation}
with the initial data $\Psi|_{t=0}=0$ a.e. in~$\mathbb{T}$. For any $p\geq 0$ and $B(s)\in C^{1}([0,\rho_{+}])\cap C^{2}((0,\rho_{+}])$ satisfying
$$
s^{p}B'(s)\in C([0,\rho_{+}]),\quad C_{p,B}:=\sup_{0\leq \rho,s\leq \rho_{+}}\Big{|}\rho^{p}\int_{0}^{1}\int_{0}^{1}B''(\rho+\theta_{1}\theta_{2}(s-\rho))\theta_{1}d\theta_{2}d\theta_{1}\Big{|}<\infty,
$$
we have
\begin{equation}\label{tayor}
\begin{split}
&\rho^{p}(B(\rho^{\varepsilon})-B(\rho))=\rho^{p}B'(\rho)(\rho^{\varepsilon}-\rho)+\rho^{p}\int_{0}^{1}\int_{0}^{1}B''(\rho+\theta_{1}\theta_{2}(\rho^{\varepsilon}-\rho))\theta_{1}d\theta_{2}d\theta_{1}|\rho^{\varepsilon}-\rho|^2,
\end{split}
\end{equation}
from which we deduce for any $H\in L^1(0,T;L^{\infty}(\mathbb{T}))$ and a.e. $t\in (0,T)$ that
\begin{equation}\nonumber
\begin{split}
&\int_{\mathbb{T}} |H\rho^{p}(\underline{B(\rho)}-B(\rho))|(x,t)dx\leq C_{p,B}\|H(t)\|_{L^{\infty}(\mathbb{T})}\underset{\varepsilon\rightarrow 0}{\sup\lim}\int_{\mathbb{T}}|\rho^{\varepsilon}-\rho|^2(x,t)dx.
\end{split}
\end{equation}
Thus, one gets for $B_{1}(s)=s^2\mu(s)^{-1}, B_{2}(s)=\mu(s)^{-1}, B_{3}(s)=s^{\gamma+2}\mu(s)^{-1}$, and $B_{4}(s)=s^{\gamma}\mu(s)^{-1}$ that
\begin{equation}\label{tayor1}
\left\{
\begin{split}
&\int_{\mathbb{T}}|\mathcal{G}(\underline{\rho^2\mu(\rho)^{-1}}-\rho^2\mu(\rho)^{-1})|(x,t)dx\leq C_{0,B_{1}}\|\mathcal{G}(t)\|_{L^{\infty}(\mathbb{T})}\underset{\varepsilon\rightarrow 0}{\sup\lim}\int_{\mathbb{T}}|\rho^{\varepsilon}-\rho|^2(x,t)dx,\\
&\int_{\mathbb{T}}|\mathcal{G}\rho^2(\mu(\rho)^{-1}-\underline{\mu(\rho)^{-1}})|(x,t)dx\leq C_{2,B_{2}}\|\mathcal{G}(t)\|_{L^{\infty}(\mathbb{T})}\underset{\varepsilon\rightarrow 0}{\sup\lim}\int_{\mathbb{T}}|\rho^{\varepsilon}-\rho|^2(x,t)dx,\\
&\int_{\mathbb{T}}|\underline{\rho^{\gamma+2}\mu(\rho)^{-1}}-\rho^{\gamma+2}\mu(\rho)^{-1}|(x,t)dx\leq C_{0,B_{3}}\underset{\varepsilon\rightarrow 0}{\sup\lim}\int_{\mathbb{T}}|\rho^{\varepsilon}-\rho|^2(x,t)dx,\\
&\int_{\mathbb{T}}\rho^2|\rho^{\gamma}\mu(\rho)^{-1}-\underline{\rho^{\gamma}\mu(\rho)^{-1}}|(x,t)dx\leq C_{2,B_{4}}\underset{\varepsilon\rightarrow 0}{\sup\lim}\int_{\mathbb{T}}|\rho^{\varepsilon}-\rho|^2(x,t)dx.
\end{split}
\right.
\end{equation}
In particular, we choose $B(s)=s^2$ and $p=0$ in (\ref{tayor}) to have
\begin{equation}\label{psigeq0}
\begin{split}
&\underset{\varepsilon\rightarrow 0}{\sup\lim}\int_{\mathbb{T}}|\rho^{\varepsilon}-\rho|^2(x,t)dx\leq\int_{\mathbb{T}}\Psi(x,t) dx.
\end{split}
\end{equation}
Integrating (\ref{limitrhoerror}) over $\mathbb{T}\times[0,t]$ and substituting (\ref{tayor1})-(\ref{psigeq0}) into the resulting inequality, we obtain
\begin{equation}\nonumber
\begin{split}
\underset{t\in[0, T]}{{\rm{ess~sup}}}\int_{\mathbb{T}}|\rho^{\varepsilon}-\rho|^2(x,t)dx\leq C\int_{0}^{T}\big{(}1+\|\mathcal{G}(t)\|_{L^{\infty}(\mathbb{T})}\big{)}\int_{\mathbb{T}}|\rho^{\varepsilon}-\rho|^2(x,t)dxdt,
\end{split}
\end{equation}
which together with $(\ref{basicCNSVFP})_{5}$, (\ref{limit6}), and the Gr${\rm{\ddot{o}}}$nwall inequality gives rise to (\ref{limit7}).

Furthermore, owing to Lemma \ref{lemma23}, $(\ref{limit1})_{1}$, $(\ref{limit7})$, and integration by parts, we obtain
\begin{equation}\label{fL2}
\begin{split}
&\sqrt{\rho^{\varepsilon}}(f^{\varepsilon})_{v}\rightharpoonup \sqrt{\rho} f_{v}\quad  \text{in}~L^2(0,T;L^2(\mathbb{T}\times\mathbb{R})),\quad \text{as}~\varepsilon\rightarrow 0.
\end{split}
\end{equation}
By virtue of (\ref{limit1})-(\ref{limit7}) and $(\ref{fL2})$, one can show that the limit $(\rho,u,f)$ indeed satisfies the equations $(\ref{m1})$ in the sense of distributions.

\emph{Step~3: The properties $(\ref{r0})$-$(\ref{entropyinequality})$ and $(\ref{r1})$-$(\ref{b1})$.} For any nonnegative function $\phi\in \mathcal{D}(0,T)$ and constant $R>0$, one deduces by  $(\ref{basicCNSVFP})_{2}$ and $(\ref{limit1})_{3}$ that
\begin{equation}\nonumber
\begin{split}
&\int_{0}^{T}\phi(t)\int_{\mathbb{T}\times\mathbb{R}} \langle v\rangle^3\mathrm{1}_{|v|\leq R}f(x,v,t)dvdxdt\leq \underset{\varepsilon\rightarrow 0}{\lim\sup}\int_{0}^{T}\phi(t)\int_{\mathbb{T}\times\mathbb{R}} \langle v\rangle^3f^{\varepsilon}(x,v,t)dvdxdt.
\end{split}
\end{equation}
We take the limit as $R\rightarrow \infty$ and apply the monotone convergence theorem to get
\begin{equation}
\begin{split}
f\in L^{\infty}(0,T;L^1_{3}(\mathbb{T}\times\mathbb{R})).\label{limit2}
\end{split}
\end{equation}
Therefore, it follows by Lemma \ref{lemma22}-\ref{lemma23}, (\ref{limit1}), and (\ref{limit2}) that $(\rho,u,f)$ satisfies (\ref{r1}). By $(\ref{m1})_{3}$ and (\ref{basicCNSVFP}), it holds for any $p\in(1,3]$ that
\begin{equation}\nonumber
\begin{split}
&\sup_{\varepsilon>0}\int_{0}^{T}\|\langle v\rangle ^{\frac{3}{p}-1}(f^{\varepsilon})_{t}(t)\|_{W^{-2,p}(\mathbb{T}\times\mathbb{R})}^2dt\\
&\quad=\sup_{\varepsilon>0}\int_{0}^{T}\Big{|}\sup_{\|\varphi\|_{W^{2,\frac{p}{p-1}}(\mathbb{T}\times\mathbb{R})}\leq 1}\int_{\mathbb{T}\times\mathbb{R}}\big{[}\langle v\rangle^{\frac{3}{p}-1}vf^{\varepsilon}\varphi_{x}+\rho^{\varepsilon}(u^{\varepsilon}-v)f^{\varepsilon}(\langle v\rangle^{\frac{3}{p}-1}\varphi)_{v}\\
&\quad\quad+\rho^{\varepsilon} f^{\varepsilon}(\langle v\rangle^{\frac{3}{p}-1}\varphi)_{vv}\big{]}(x,v,t)dvdx\Big{|}^2dt\\
&\quad\leq C\sup_{\varepsilon>0}\Big{(} \big{(}1+\|u^{\varepsilon}\|_{L^2(0,T;L^{\infty}(\mathbb{T}))}\big{)} \|f^{\varepsilon}\|_{L^{\infty}(0,T;L^1_{3}(\mathbb{T}\times\mathbb{R}))}^{\frac{1}{p}}\|f^{\varepsilon}\|_{L^{\infty}(0,T;L^{\infty}(\mathbb{T}\times\mathbb{R}))}^{1-\frac{1}{p}}\Big{)}^2<\infty,
\end{split}
\end{equation}
which together with Lemma C.1 in \cite{lions1} yields for any $p\in(1,3)$ and $q\in[0,\frac{3}{p}-1]$ that
\begin{equation}\label{fweaklp}
\begin{split}
&\langle v\rangle^{q}f^{\varepsilon}\rightarrow \langle v\rangle^{q}f\quad\text{ in }~C([0,T];L^{p}_{weak}(\mathbb{T}\times\mathbb{R})),\quad \text{as}~\varepsilon\rightarrow 0.
\end{split}
\end{equation}
Since we have $1\in L^2(\mathbb{T})$ and $\langle v\rangle^{-\frac{2}{5}}\in L^{5}(\mathbb{T}\times\mathbb{R})$, one deduces by $(\ref{limit1})_{3}$ and $(\ref{fweaklp})$ for $(p,q)=(\frac{5}{4},\frac{2}{5})$ that
\begin{equation}\label{limit10}
\left\{
\begin{split}
&\lim_{\varepsilon\rightarrow0}\int_{\mathbb{T}}\rho^{\varepsilon}(x,t)dx=\int_{\mathbb{T}}\rho(x,t)dx,\quad \forall t\in [0,T],\\
&\lim_{\varepsilon\rightarrow0}\int_{\mathbb{T}\times\mathbb{R}}\langle v\rangle^{-\frac{2}{5}}\langle v\rangle^{\frac{2}{5}}f^{\varepsilon}(x,v,t)dvdx=\int_{\mathbb{T}\times\mathbb{R}}f(x,v,t)dvdx,\quad \forall t\in [0,T].
\end{split}
\right.
\end{equation}
Similarly, it follows from $v\langle v\rangle^{-\frac{7}{5}}\in L^{5}(\mathbb{T}\times\mathbb{R})$, $(\ref{limit1})_{4}$, and $(\ref{fweaklp})$ for $(p,q)=(\frac{5}{4},\frac{7}{5})$ that
\begin{equation}\label{limit11}
\left\{
\begin{split}
&\lim_{\varepsilon\rightarrow0}\int_{\mathbb{T}}\rho^{\varepsilon}u^{\varepsilon}(x,t)dx=\int_{\mathbb{T}}\rho u(x,t)dx,\quad \forall t\in[0,T],\\
&\lim_{\varepsilon\rightarrow0}\int_{\mathbb{T}\times\mathbb{R}}v\langle v\rangle^{-\frac{7}{5}}\langle v\rangle^{\frac{7}{5}}f^{\varepsilon}(x,v,t)dvdx=\int_{\mathbb{T}\times\mathbb{R}}vf(x,v,t)dvdx,\quad \forall t\in [0,T].
\end{split}
\right.
\end{equation}
According to $(\ref{mass})$-$(\ref{momentum})$ and (\ref{limit10})-(\ref{limit11}), the conservation laws (\ref{massmomentum}) hold for any $t\in[0,T]$. We conclude from (\ref{limit1})-(\ref{limit7}), (\ref{fL2}), (\ref{fweaklp}), and the lower semi-continuity of the limit $(\rho,u,f)$ that the entropy inequality $(\ref{entropyinequality})$ holds for a.e. $t\in [0,T]$. By virtue of Proposition \ref{prop41}, one has the long time behavior $(\ref{b1})$. The proof of Theorem \ref{theorem11} is completed.

\begin{remark} Under the assumptions of Theorem \ref{theorem11}, if it further holds $\inf_{x\in\mathbb{T}}\rho_{0}(x)>0$, then we obtain $\|f_{v}\|_{L^2(0,T;L^2)}\leq C_{T}$ due to $(\ref{N1time})_{2}$, so we can apply Lemma \ref{lemma66} as in $(\ref{vartheta})$-$(\ref{strongf})$ to prove
\begin{equation}\nonumber
\begin{split}
&\sqrt{f^{\varepsilon}}\rightarrow \sqrt{f}\quad\text{in}~L^{2}(0,T;L^2(\mathbb{T}\times\mathbb{R})),\quad \text{as}~\varepsilon\rightarrow 0,
\end{split}
\end{equation}
which implies
\begin{equation}\nonumber
\begin{split}
&\sqrt{\rho^{\varepsilon}} (\sqrt{f^{\varepsilon}})_{v}\rightharpoonup  \sqrt{\rho} (\sqrt{f})_{v}\quad\text{in}~L^{2}(0,T;L^2(\mathbb{T}\times\mathbb{R})),\quad \text{as}~\varepsilon\rightarrow 0.
\end{split}
\end{equation}
Thus, one can show
\begin{equation}\nonumber
\begin{split}
&\int_{0}^{t}\int_{\mathbb{T}\times\mathbb{R}}(\rho|(u-v)\sqrt{f}-2(\sqrt{f})_{v}|^2)(x,v,\tau)dvdxd\tau\\
&\quad\leq \lim\sup_{\varepsilon\rightarrow0}\int_{0}^{t}\int_{\mathbb{T}\times\mathbb{R}}(\rho^{\varepsilon}|(u^{\varepsilon}-v)\sqrt{f^{\varepsilon}}-2(\sqrt{f^{\varepsilon}})_{v}|^2)(x,v,\tau)dvdxd\tau,
\end{split}
\end{equation}
so that the entropy inequality $(\ref{entropyinequality0})$ holds.
\end{remark}

~\

\underline{\it\textbf{Proof of Theorem \ref{theorem12}:}}~ Let $(\rho_{0},\frac{m_{0}}{\rho_{0}},f_{0})$ satisfy $(\ref{a2})$, and $(\rho_{0}^{\varepsilon},u_{0}^{\varepsilon},f_{0}^{\varepsilon})$ be given through (\ref{weakdata}) for $\varepsilon\in(0,1)$. Similarly, we can obtain an approximate sequence $(\rho^{\varepsilon},u^{\varepsilon},f^{\varepsilon})$ and show its convergence to a global weak solution $(\rho,u,f)$ to the IVP (\ref{m1})-(\ref{kappa}) in the sense of Definition \ref{defn11} as $\varepsilon\rightarrow0$. Thus, it follows from the a-priori estimates established in Lemmas \ref{lemma25}-\ref{lemma26} that $(\rho, u, f)$ satisfies the further properties (\ref{r2}). In addition, this weak solution $(\rho,u,f)$ is unique due to Proposition \ref{prop31}, and the time convergence $(\ref{b2})$ can be derived by Propositions \ref{prop42}-\ref{prop43}. The proof of Theorem \ref{theorem12} is completed.

\section{Appendix}

First, the following Zlonik's inequality is used to get the upper of the fluid density uniformly in time:

\begin{lemma}[\!\!\cite{zlotnik1}]\label{lemma61}
Let $b\in W^{1,1}(0,T)$ and $g\in C(\mathbb{R})$. Assume that the function $y\in W^{1,1}(0,T)$ solves the ODE system
$$
y'(t)=g(y)+b'(t)\quad\text{on}~[0,T],\quad y(0)=y_{0}.
$$
If $g(\infty)=-\infty$ and 
$$
b(t_{2})-b(t_{1})\leq N_{0}+N_{1}(t_{2}-t_{1})
$$
for any $0\leq t_{1}<t_{2}\leq T$ with some constants $N_{0}\geq 0$ and $N_{1}\geq 0$, then we have
$$
y(t)\leq \max\{y_{0},\xi_{*}\}+N_{0}<\infty\quad\text{on}~[0,T],
$$
where $\xi_{*}\in\mathbb{R}$ is a constant such that it holds
$$
g(\xi)\leq -N_{1}\quad\forall ~\xi\geq \xi_{*}.
$$
\end{lemma}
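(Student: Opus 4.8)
The plan is to prove the bound by a direct ``last crossing'' argument, using that $y,b\in W^{1,1}(0,T)$ are absolutely continuous, so $y$ is continuous on $[0,T]$ and the fundamental theorem of calculus applies to both $y$ and $b$. First I would invoke $g(\infty)=-\infty$ to select a real number $\xi_*$ with $g(\xi)\le -N_1$ for all $\xi\ge\xi_*$ (this is precisely the $\xi_*$ in the statement), and set $M:=\max\{y_0,\xi_*\}+N_0$; the goal then becomes $y(t)\le M$ for every $t\in[0,T]$.

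Next, fixing $t\in[0,T]$, I would split according to the set $E_t:=\{\tau\in[0,t]:y(\tau)\le\xi_*\}$. If $E_t=\emptyset$ then $y(\tau)>\xi_*$ on $[0,t]$, hence $g(y(\tau))\le -N_1$ a.e., and integrating $y'=g(y)+b'$ over $[0,t]$ together with the hypothesis $b(t)-b(0)\le N_0+N_1 t$ gives
$$y(t)=y_0+\int_0^t g(y(\tau))\,d\tau+\bigl(b(t)-b(0)\bigr)\le y_0-N_1t+N_0+N_1t=y_0+N_0\le M.$$
If $E_t\neq\emptyset$, I would set $t_1:=\sup E_t$. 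If $t_1=t$ then $y(t)\le\xi_*\le M$ by continuity; if $t_1<t$, continuity of $y$ and maximality of $t_1$ force $y(t_1)=\xi_*$ and $y(\tau)>\xi_*$ for all $\tau\in(t_1,t]$, so $g(y(\tau))\le -N_1$ a.e.\ on $(t_1,t)$ and integration over $[t_1,t]$ with $b(t)-b(t_1)\le N_0+N_1(t-t_1)$ yields
$$y(t)=y(t_1)+\int_{t_1}^t g(y(\tau))\,d\tau+\bigl(b(t)-b(t_1)\bigr)\le\xi_*-N_1(t-t_1)+N_0+N_1(t-t_1)=\xi_*+N_0\le M.$$
Since $t\in[0,T]$ was arbitrary, this finishes the proof.

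The only subtle point is the case $t_1<t$: one must rule out $y(t_1)<\xi_*$, which follows because continuity would then place $y$ strictly below $\xi_*$ on a whole neighborhood of $t_1$, contradicting $t_1=\sup E_t$; similarly $y>\xi_*$ strictly on $(t_1,t]$ holds by the definition of $t_1$ as a supremum. Everything else reduces to a single application of the fundamental theorem of calculus and the sign condition $g(\xi)\le -N_1$ for $\xi\ge\xi_*$. In particular no Gr\"onwall iteration or fixed-point argument is needed, which is exactly what makes this lemma a convenient black box for the uniform-in-time upper bound on the density in Lemma \ref{lemma22}.
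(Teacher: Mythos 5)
Your proof is correct. Note that the paper does not prove Lemma \ref{lemma61} at all: it is quoted from Zlotnik \cite{zlotnik1} and used as a black box in Lemmas \ref{lemma22} and \ref{lemma25}, so there is no in-paper argument to compare against; your last-crossing argument is the standard elementary proof of this comparison lemma. Two small remarks: since $y$ is (the absolutely continuous representative of) a $W^{1,1}$ function, the set $E_t=\{\tau\in[0,t]:y(\tau)\le\xi_*\}$ is relatively closed in $[0,t]$, so $t_1=\sup E_t$ actually belongs to $E_t$ and $y(t_1)\le\xi_*$ follows at once — you do not need the full equality $y(t_1)=\xi_*$, only this one-sided bound, which makes the endpoint discussion even shorter; and the degenerate case $t=0$ (where the hypothesis on $b$, stated for $t_1<t_2$, is not invoked) is trivial because $N_0\ge 0$. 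With these observations the case analysis $E_t=\emptyset$ versus $E_t\neq\emptyset$, together with $g(y(\tau))\le -N_1$ on the final interval and $b(t)-b(t_1)\le N_0+N_1(t-t_1)$, gives exactly the claimed bound $y(t)\le\max\{y_0,\xi_*\}+N_0$.
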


The spaces of weak topology on $X$ is denoted by
\begin{equation}\label{weaktimespace}
\begin{split}
C([0,T];X_{weak}):=\big{\{} g:[0,T]\rightarrow X~\big{|}~<g,\phi>_{X,X^{*}}\in C([0,T])~\text{for any $\phi\in X^{*}$}\big{\}},
\end{split}
\end{equation}
and $g^{n}\rightarrow g$ in $C([0,T];X_{weak})$ as $n\rightarrow \infty$ if and only if $<g^{n}(t),\varphi>_{X,X^{*}}\rightarrow <g(t),\varphi>_{X,X^{*}}$ uniformly in $t\in[0,T]$ for any $\phi\in X^{*}$. 
We have
\begin{lemma}[\!\!\cite{lions2}, Lemma C.1]\label{lemma62}
Let $T>0$, $X$ be a separable reflexive Banach space, and $Y$ be a Banach space such that $X$ is continuously embedded in $Y$ and $Y^{*}$ is separable and dense in $X^{*}$. If $u^{n}$ is uniformly bounded in $L^{\infty}(0,T;X)$ and $(u^{n})_{t}$ is uniformly bounded in $L^1(0,T;Y)$, then $u^{n}$ is strongly compact in $C([0,T];X_{weak})$.
\end{lemma}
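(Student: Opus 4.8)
The plan is to reduce the statement to a scalar Arzel\`a--Ascoli argument combined with a density argument in $X^{*}$. Since $Y^{*}$ is separable, I would first fix a countable set $\{\phi_{k}\}_{k\geq 1}\subset Y^{*}$ whose closed linear span is all of $Y^{*}$; because $Y^{*}$ is dense in $X^{*}$, this set is also dense in $X^{*}$. As $X$ is continuously embedded in $Y$, one has $\|u^{n}(t)\|_{Y}\leq C\|u^{n}(t)\|_{X}$, so for each $n$ and each $k$ the scalar function $g_{k}^{n}(t):=\langle u^{n}(t),\phi_{k}\rangle_{Y,Y^{*}}$ is well defined, lies in $W^{1,1}(0,T)$, and satisfies $\tfrac{d}{dt}g_{k}^{n}(t)=\langle (u^{n})_{t}(t),\phi_{k}\rangle_{Y,Y^{*}}$ for a.e.\ $t$.

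Next I would record the two uniform-in-$n$ properties that feed Arzel\`a--Ascoli. The bound in $L^{\infty}(0,T;X)$ gives $\sup_{t\in[0,T]}|g_{k}^{n}(t)|\leq C\|\phi_{k}\|_{Y^{*}}$, and integrating the identity for $\tfrac{d}{dt}g_{k}^{n}$ yields, for $0\leq s\leq t\leq T$, the modulus of continuity $|g_{k}^{n}(t)-g_{k}^{n}(s)|\leq \|\phi_{k}\|_{Y^{*}}\int_{s}^{t}\|(u^{n})_{\tau}\|_{Y}\,d\tau$, whose right-hand side tends to zero uniformly in $n$ as $t-s\to 0$ thanks to the time-integrability control on $(u^{n})_{t}$ (in the applications of this lemma the available bounds on the time derivatives provide the equi-integrability needed here). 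Hence $\{g_{k}^{n}\}_{n}$ is uniformly bounded and equicontinuous in $C([0,T])$. Applying Arzel\`a--Ascoli successively for $k=1,2,\dots$ and passing to a diagonal subsequence --- still denoted $u^{n}$ --- I obtain $g_{k}^{n}\to g_{k}$ in $C([0,T])$ for every $k$.

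It then remains to produce a limit object in $C([0,T];X_{weak})$. For each fixed $t$, the sequence $(u^{n}(t))_{n}$ is bounded in the reflexive space $X$ and $\langle u^{n}(t),\phi_{k}\rangle\to g_{k}(t)$ for all $k$; since $\{\phi_{k}\}$ is dense in $X^{*}$ and the $X$-bound is uniform, every weakly convergent subsequence of $(u^{n}(t))_{n}$ has the same weak limit, call it $u(t)\in X$, with $\langle u(t),\phi\rangle=\lim_{n}\langle u^{n}(t),\phi\rangle$ for all $\phi\in X^{*}$ and $\|u(t)\|_{X}\leq C$. Finally I would upgrade pointwise-in-$t$ weak convergence to uniform-in-$t$ weak convergence: given $\phi\in X^{*}$ and $\varepsilon>0$, pick $\phi_{k}$ with $\|\phi-\phi_{k}\|_{X^{*}}<\varepsilon$; then $\sup_{t}|\langle u^{n}(t)-u(t),\phi\rangle|\leq 2C\varepsilon+\sup_{t}|g_{k}^{n}(t)-g_{k}(t)|$, whose last term vanishes as $n\to\infty$. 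Thus $\langle u^{n}(\cdot),\phi\rangle\to\langle u(\cdot),\phi\rangle$ uniformly on $[0,T]$ for every $\phi\in X^{*}$; in particular $t\mapsto\langle u(t),\phi\rangle$ is continuous, so $u\in C([0,T];X_{weak})$ and $u^{n}\to u$ in this space. Since an arbitrary subsequence of the original family admits, by the same reasoning, a further subsequence converging in $C([0,T];X_{weak})$, the family $\{u^{n}\}$ is relatively compact there.

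The step I expect to be the main obstacle is the equicontinuity claim in the second paragraph: obtaining, \emph{uniformly in $n$}, a small modulus of continuity for the scalars $g_{k}^{n}$ out of the in-time control of $(u^{n})_{t}$ --- this is precisely where the embedding $X\hookrightarrow Y$ and the density of $Y^{*}$ in $X^{*}$ are exploited. Once the $g_{k}^{n}$ are known to be uniformly bounded and equicontinuous, the diagonal extraction, the identification of $u(t)$ by density, and the $\varepsilon$-argument upgrading pointwise to uniform convergence are all routine.
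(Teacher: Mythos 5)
The paper never proves this statement; it is quoted verbatim from \cite{lions2} as an appendix lemma, so the only question is whether your argument is a complete proof of the statement as written. It is not, and the gap is exactly the one you flag yourself: equicontinuity of the scalars $g_{k}^{n}$ does not follow from a uniform bound on $(u^{n})_{t}$ in $L^{1}(0,T;Y)$. An $L^{1}$ bound controls only the total variation of $t\mapsto g_{k}^{n}(t)$, not a modulus of continuity, because the mass of $\|(u^{n})_{t}\|_{Y}$ may concentrate on shrinking time intervals. Concretely, take any $x_{0}\in X$ and $\phi\in X^{*}$ with $\langle x_{0},\phi\rangle=1$, and set $u^{n}(t)=\tanh\big(n(t-\tfrac{T}{2})\big)\,x_{0}$: this sequence is bounded in $L^{\infty}(0,T;X)$, and $\|(u^{n})_{t}\|_{L^{1}(0,T;Y)}\leq 2\|x_{0}\|_{Y}$, yet $\langle u^{n}(t),\phi\rangle=\tanh\big(n(t-\tfrac{T}{2})\big)$ converges pointwise to $\operatorname{sign}(t-\tfrac{T}{2})$, so no subsequence can converge uniformly against $\phi$, i.e.\ no subsequence converges in $C([0,T];X_{weak})$. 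This shows that the missing equicontinuity cannot be recovered from the stated hypotheses at all: your parenthetical appeal to ``the applications provide the equi-integrability'' is a change of hypotheses (to, say, $(u^{n})_{t}$ bounded in $L^{r}(0,T;Y)$ with $r>1$, or $\|(u^{n})_{t}\|_{Y}$ equi-integrable), not a proof of the lemma as literally stated.

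Apart from this point, the rest of your argument is the standard and correct proof of the strengthened statement, and it is the version actually needed in this paper: where Lemma \ref{lemma62} is invoked (e.g.\ to get (\ref{rhol2weak})), the time derivative $(b(\rho^{\varepsilon}))_{t}=-(u^{\varepsilon}b(\rho^{\varepsilon}))_{x}-(b'(\rho^{\varepsilon})\rho^{\varepsilon}-b(\rho^{\varepsilon}))(u^{\varepsilon})_{x}$ is bounded in $L^{2}(0,T;H^{-1}(\mathbb{T}))$ thanks to (\ref{basicCNSVFP}) and (\ref{rho2varepsilon}), so the required equicontinuity is available, and then your chain --- uniform bound and equicontinuity of $g_{k}^{n}$, Arzel\`a--Ascoli plus diagonal extraction over a countable set $\{\phi_{k}\}\subset Y^{*}$ dense in $X^{*}$, identification of $u(t)$ by reflexivity and density, and the $\varepsilon$-argument upgrading pointwise to uniform convergence of $\langle u^{n}(\cdot),\phi\rangle$ for every $\phi\in X^{*}$ --- is complete and is essentially the argument behind Lions' Lemma C.1. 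To make the write-up correct you should either add the equi-integrability (or $L^{r}$, $r>1$) assumption on $(u^{n})_{t}$ to the statement you prove, or prove only the single-function continuity assertion ($u\in L^{\infty}(0,T;X)$ with $u_{t}\in L^{1}(0,T;Y)$ implies $u\in C([0,T];X_{weak})$), for which the $L^{1}$ hypothesis does suffice.
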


 To get the strong convergence of the moments $n$ and $nw$, we need the following lemma about the average compactness for kinetic transport equations:
\begin{lemma}[\!\!\cite{karper1}]\label{lemma63}
Let $T>0, p\in (1,\frac{3}{2})$, and $G^{n}$ be uniformly bounded in $L^{p}(0,T;L^{p}(\mathbb{T}\times\mathbb{R}))$. If the sequence $f^{n}$ is uniformly bounded in $L^{\infty}(0,T; L^1_{2}\cap L^{\infty}(\mathbb{T}\times\mathbb{R}))$ and satisfies
$$
(f^{n})_{t}+v(f^{n})_{x}=(G^{n})_{v}\quad\text{in}~\mathcal{D}'( \mathbb{T}\times \mathbb{R}\times (0,T)),
$$
then for any $\chi(v)\in\mathcal{D}(\mathbb{R})$, the sequence $\int_{\mathbb{R}} f^{n}(x,v,t)\chi(v)dv$ is strongly compact in $L^p(0,T;L^{p}(\mathbb{T}))$.
\end{lemma}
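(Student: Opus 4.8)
The plan is to reduce the statement to a velocity–averaging estimate for the free transport operator $\partial_t+v\partial_x$, and then to turn the fractional regularity such an estimate produces into strong compactness via a compact Sobolev embedding on the bounded set $(0,T)\times\mathbb T$. First I would localize in velocity: fixing $\chi\in\mathcal D(\mathbb R)$ with $\operatorname{supp}\chi\subset\{|v|\le R\}$, pick $\widetilde\chi\in\mathcal D(\mathbb R)$ equal to $1$ on $\operatorname{supp}\chi$ and set $h^n:=\widetilde\chi f^n$, so that $\int_{\mathbb R}f^n\chi\,dv=\int_{\mathbb R}h^n\chi\,dv$ and
\[
(h^n)_t+v(h^n)_x=(\widetilde\chi G^n)_v-\widetilde\chi'G^n\qquad\text{in}~\mathcal D'(\mathbb T\times\mathbb R\times(0,T)).
\]
The right–hand side is $\partial_v$ of a sequence bounded in $L^p((0,T)\times\mathbb T\times\mathbb R)$ plus a sequence bounded in $L^p$ with fixed compact $v$–support, and since $f^n$ is bounded in $L^\infty(0,T;L^1_2\cap L^\infty(\mathbb T\times\mathbb R))$, the truncation $h^n$ is bounded in $L^q((0,T)\times\mathbb T\times\mathbb R)$ for every $q\in[1,\infty]$, in particular in $L^p$. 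Multiplying by a cut–off $\phi\in\mathcal D(0,T)$ and absorbing the extra term $\phi'h^n$ into the source, I may view $\phi h^n$ as a solution on $\mathbb R_t\times\mathbb T_x\times\mathbb R_v$ with right–hand side of the form $\partial_v g_1+g_0$, $g_1,g_0\in L^p$, all bounds uniform in $n$.

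The core step is to invoke the $L^p$ velocity–averaging lemma (the $L^p$ version of the Golse–Lions–Perthame–Sentis estimate, in the spirit of DiPerna–Lions–Meyer and B\'ezard, in the form used in \cite{karper1}): for $1<p<\infty$, a solution $f\in L^p$ of $\partial_t f+v\partial_x f=\partial_v g_1+g_0$ with $g_1,g_0\in L^p$, and $\psi\in\mathcal D(\mathbb R)$, the average $\int_{\mathbb R}f\psi\,dv$ lies in $W^{s,p}$ in the variables $(t,x)$ for some exponent $s=s(p)>0$. On the torus this follows from a Fourier series in $x$ coupled with the usual dyadic splitting of the resonant region $\{|\tau+kv|\le\delta\}$, whose $v$–measure is $O(\delta/|k|)$ for $k\neq 0$; the single Fourier mode $k=0$ is handled separately, since
\[
\frac{d}{dt}\int_{\mathbb T\times\mathbb R}f^n\chi\,dv\,dx=-\int_{\mathbb T\times\mathbb R}G^n\chi'\,dv\,dx
\]
is bounded in $L^p(0,T)$ while the mode itself is bounded in $L^\infty$. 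Applying this to $\phi h^n$ gives that $\phi\,\rho^n_\chi$, with $\rho^n_\chi:=\int_{\mathbb R}f^n\chi\,dv$, is bounded in $W^{s,p}(\mathbb R_t\times\mathbb T_x)$ uniformly in $n$ and supported in a fixed compact subset of $(0,T)\times\mathbb T$.

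Since $W^{s,p}$ embeds compactly into $L^p$ on bounded domains, $\phi\,\rho^n_\chi$ is relatively compact in $L^p((0,T)\times\mathbb T)$. Choosing a partition of unity of $(0,T)$ by such $\phi$'s and diagonalizing yields a subsequence along which $\rho^n_\chi$ converges in $L^p_{\mathrm{loc}}(0,T;L^p(\mathbb T))$; the uniform bound $\|\rho^n_\chi\|_{L^\infty((0,T)\times\mathbb T)}\le\|f^n\|_{L^\infty}\|\chi\|_{L^1}$ then upgrades this to convergence in $L^p(0,T;L^p(\mathbb T))$ by dominated convergence, and uniqueness of the limit makes the whole sequence strongly compact there.

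The main obstacle is the averaging estimate itself, because the source carries a genuine $v$–derivative: in the free, $L^2$ case the gain is $s=1/2$, but one $v$–derivative cuts it down to $s\le 1/4$ when $p=2$, and in the $L^p$ scale one must verify that the resulting exponent $s(p)$ stays strictly positive on the whole admissible range $p\in(1,3/2)$. This is precisely where the real–interpolation and Besov–space refinements of the elementary Plancherel argument (which already settles $p=2$) are needed; the remaining ingredients — the velocity and time localizations, the torus bookkeeping with the $k=0$ mode, and the passage from local–in–time to global–in–time compactness — are routine.
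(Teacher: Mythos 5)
The paper does not actually prove this lemma --- it is quoted directly from Karper--Mellet--Trivisa \cite{karper1} --- so there is no internal argument to compare against; your sketch reconstructs the standard proof underlying that citation (velocity and time cut-offs, the $L^p$ velocity-averaging estimate for $\partial_t+v\partial_x$ with one $v$-derivative on the source yielding a $W^{s,p}$ gain in $(t,x)$ with $s(p)>0$ for $p\in(1,\tfrac32)$, separate treatment of the $k=0$ mode on the torus, compact embedding $W^{s,p}\hookrightarrow L^p$, and the uniform $L^\infty$ bound to pass from local to global time), and it is correct in outline. Two minor caveats: the core averaging estimate with a genuine $\partial_v$ on the right-hand side is itself invoked from the literature (DiPerna--Lions--Meyer, B\'ezard) rather than proved, which is consistent with the paper's own citation-level treatment, and the closing appeal to ``uniqueness of the limit'' is unnecessary --- the lemma only asserts precompactness, so extraction of a convergent subsequence already suffices.
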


By virtue of Dunford-Pettis criterion, we have the following lemma:

\begin{lemma}[\!\!\cite{arkeryd1}]\label{lemma64}
Let $d\geq1$, and $f^{n}$ be a sequence of functions in $L^1(\mathbb{R}^{d})$ satisfying
$$
\sup_{n\geq0}\int_{\mathbb{R}^{d}}\big{(}|f^{n}|+|x|^2|f^{n}|+|f^{n}\log{f^{n}}|\big{)}(x)dx<\infty.
$$
Then $f^{n}$ is weakly compact in $L^1(\mathbb{R}^{d})$.
\end{lemma}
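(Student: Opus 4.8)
The plan is to deduce the statement from the Dunford--Pettis criterion: a family in $L^1(\mathbb{R}^d)$ is relatively weakly sequentially compact if and only if it is bounded, equi-integrable (its integral over a set is uniformly small once that set has small Lebesgue measure), and tight (its mass outside a large ball is uniformly small). I would extract each of these three properties from one of the three quantities controlled by the hypothesis; the presence of the second-moment term, rather than only the $L\log L$ bound, is precisely what allows tightness on the unbounded domain $\mathbb{R}^d$.

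Boundedness is immediate, since the hypothesis gives $\sup_{n}\|f^{n}\|_{L^1(\mathbb{R}^d)}\le C$. Tightness follows from the second-moment bound and Chebyshev's inequality: for every $R>0$,
$$
\int_{|x|>R}|f^{n}(x)|\,dx\le \frac{1}{R^{2}}\int_{\mathbb{R}^d}|x|^{2}|f^{n}(x)|\,dx\le \frac{C}{R^{2}},
$$
which is uniformly small as $R\to\infty$.

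For equi-integrability I would use the superlinear growth of $s\mapsto s\log s$ in the manner of the de la Vall\'{e}e--Poussin criterion. For $\lambda>1$, on the set $\{|f^{n}|>\lambda\}$ one has $|f^{n}|\le (\log\lambda)^{-1}|f^{n}|\log|f^{n}|\le (\log\lambda)^{-1}|f^{n}\log f^{n}|$, so that
$$
\int_{\{|f^{n}|>\lambda\}}|f^{n}|\,dx\le \frac{1}{\log\lambda}\int_{\mathbb{R}^d}|f^{n}\log f^{n}|\,dx\le \frac{C}{\log\lambda}.
$$
Hence, for any measurable $A$ with $|A|<\delta$,
$$
\int_{A}|f^{n}|\,dx\le \lambda|A|+\int_{\{|f^{n}|>\lambda\}}|f^{n}|\,dx\le \lambda\delta+\frac{C}{\log\lambda},
$$
and given $\varepsilon>0$ one first picks $\lambda$ large enough that $C/\log\lambda<\varepsilon/2$ and then $\delta$ small enough that $\lambda\delta<\varepsilon/2$, which gives equi-integrability uniformly in $n$.

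With these three properties established, the Dunford--Pettis theorem yields that $\{f^{n}\}$ is relatively weakly sequentially compact in $L^1(\mathbb{R}^d)$, i.e.\ a subsequence converges weakly in $L^1(\mathbb{R}^d)$, which is the claim. This argument is essentially classical, and the only point that requires a little care---and is the main, though minor, obstacle---is that $s\log s$ is negative on $(0,1)$ and so is not itself a de la Vall\'{e}e--Poussin weight; the clean fix is to run the last estimate with the genuine weight $\Phi(s)=(s\log s)^{+}$, which is nonnegative, convex, and superlinear at infinity, and satisfies $\int_{\mathbb{R}^d}\Phi(|f^{n}|)\,dx\le \int_{\mathbb{R}^d}|f^{n}\log f^{n}|\,dx\le C$, so that everything above goes through unchanged.
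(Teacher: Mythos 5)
Your argument is correct and is exactly the route the paper has in mind: the paper gives no proof of this lemma, simply citing Arkeryd and noting it follows "by virtue of Dunford--Pettis criterion", which is precisely your scheme (boundedness from the $L^1$ bound, tightness from the second moment via Chebyshev, equi-integrability from the $L\log L$ bound in de la Vall\'ee--Poussin style). Your closing remark about replacing $s\log s$ by $(s\log s)^{+}$ correctly handles the only minor subtlety, so nothing further is needed.
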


In addition, we need

\begin{lemma}[\!\!\cite{brezis1}, pp. 468]\label{lemma65}
Let $d\geq1$, and $f^{n}$ be a sequence of functions in $L^1(\mathbb{R}^{d})$ satisfying as $n\rightarrow\infty$ that
\begin{equation}\nonumber
\left\{
\begin{split}
&f^{n}\rightharpoonup f \quad \text{in }~L^1(\mathbb{R}^{d}),\\
&f^{n}\rightarrow f\quad \text{a.e. in}~\mathbb{R}^{d},
\end{split}
\right.
\end{equation}
 then we have
$$
f^{n}\rightarrow f\quad\text{ in } ~L^1(\mathbb{R}^{d}),\quad\text{as}~n\rightarrow\infty.
$$
\end{lemma}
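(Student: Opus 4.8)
The plan is to deduce the strong $L^1$-convergence from the Dunford--Pettis characterization of weak compactness in $L^1$ combined with Vitali's convergence theorem, handling the infinite measure of $\mathbb{R}^d$ through a tightness estimate. First I would observe that, since $f^{n}$ converges weakly in $L^1(\mathbb{R}^d)$, the family $\{f^{n}\}_{n\geq 1}$ is relatively weakly compact in $L^1(\mathbb{R}^d)$; by the Dunford--Pettis theorem this is equivalent to the family being bounded in $L^1$, uniformly integrable in the sense that $\sup_n\int_{\{|f^{n}|>M\}}|f^{n}|\,dx\to 0$ as $M\to\infty$, and tight in the sense that $\sup_n\int_{\{|x|>R\}}|f^{n}|\,dx\to 0$ as $R\to\infty$. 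The weak limit $f$ inherits the tightness bound: testing the weak convergence against $\mathbf{1}_{\{|x|>R\}}\,\mathrm{sgn}(f)\in L^{\infty}(\mathbb{R}^d)$ yields $\int_{\{|x|>R\}}|f|\,dx\leq\limsup_{n\to\infty}\int_{\{|x|>R\}}|f^{n}|\,dx$, so $f$ is tight as well.

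Next, I would fix $\varepsilon>0$ and choose $R>0$ so large that $\sup_n\int_{\{|x|>R\}}|f^{n}|\,dx\leq\varepsilon$ and $\int_{\{|x|>R\}}|f|\,dx\leq\varepsilon$. On the ball $B_R=\{|x|\leq R\}$, which has finite Lebesgue measure, the sequence $f^{n}$ converges to $f$ almost everywhere, hence in measure, and $\{f^{n}\}$ is uniformly integrable; Vitali's convergence theorem (equivalently, Egorov's theorem together with uniform integrability) then gives $f^{n}\to f$ strongly in $L^1(B_R)$ as $n\to\infty$. Splitting the integral,
$$
\int_{\mathbb{R}^d}|f^{n}-f|\,dx=\int_{B_R}|f^{n}-f|\,dx+\int_{\{|x|>R\}}|f^{n}-f|\,dx\leq\int_{B_R}|f^{n}-f|\,dx+2\varepsilon,
$$
whence $\limsup_{n\to\infty}\|f^{n}-f\|_{L^1(\mathbb{R}^d)}\leq 2\varepsilon$. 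Since $\varepsilon>0$ is arbitrary, the conclusion follows.

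The only step that is not completely routine is the passage from weak $L^1$-convergence to uniform integrability and, above all, to tightness — the control of the mass of $f^{n}$ escaping to spatial infinity; this is precisely where the Dunford--Pettis theorem is indispensable, since a.e. convergence alone provides no such control. Once uniform integrability and tightness are available, the remaining argument is the standard reduction to the finite-measure case handled by Vitali/Egorov, and I expect no further difficulty there.
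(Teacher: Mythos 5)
Your proof is correct. The paper does not prove this lemma at all — it is stated in the appendix and quoted directly from the cited reference (Brezis) — so there is no in-paper argument to compare against; your route via Dunford--Pettis (uniform integrability plus tightness, the latter being part of the $L^1(\mathbb{R}^d)$ characterization since the measure is infinite), Vitali's theorem on the finite-measure ball $B_R$, and the $2\varepsilon$ splitting is the standard derivation and is complete as written.
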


We need the following lemma, which implies the strong compactness in all variables of the distribution function for the Vlasov-Fokker-Planck equation $(\ref{m1})_{3}$ provided that this sequence is strongly compact with respect to velocity variable.
\begin{lemma}[\!\!\cite{arsenio1}]\label{lemma66}
Let $d\geq 1, 1<p<\infty,~\alpha_{1}\geq 0,~0\leq \alpha_{2}<1$, and the nonnegative sequence $f^{n}\in L^{p}(\mathbb{R}^{d}\times\mathbb{R}^{d}\times\mathbb{R})$ uniformly in $n\geq0$, be locally strongly compact with respect to $v$ in $L^{p}(\mathbb{R}^{d}\times\mathbb{R}^{d}\times\mathbb{R})$, and satisfy
\begin{equation}\nonumber
\begin{split}
(f^{n})_{t}+v\cdot\nabla_{x}f^{n}=(I-\Delta_{t,x})^{\frac{\alpha_{2}}{2}}(I-\Delta_{v})^{\frac{\alpha_{1}}{2}}g^{n}\quad\text{in}~\mathcal{D}'(\mathbb{R}^{d}\times \mathbb{R}^{d}\times\mathbb{R}),
\end{split}
\end{equation}
for $g^{n}\in L^{p}(\mathbb{R}^{d}\times\mathbb{R}^{d}\times\mathbb{R})$ uniformly in $n\geq 0$. Then, the sequence $f^{n}$ is locally strongly compact in $L^{p}(\mathbb{R}^{d}\times\mathbb{R}^{d}\times\mathbb{R})$ (in all variables).
\end{lemma}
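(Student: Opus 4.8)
The plan is to follow the $L^{p}$ velocity averaging scheme with hypoelliptic gain of Ars\'enio and Saint-Raymond. Since the sequence $f^{n}$ is already assumed to be locally strongly compact in the variable $v$, by the Riesz--Fr\'echet--Kolmogorov criterion it suffices to prove that $f^{n}$ is in addition locally equicontinuous under translations in $(t,x)$; combining the two gives local strong compactness in all variables. First I would localize, multiplying the equation by cut-off functions in $(t,x)$ and in $v$, so that $f^{n}$ and $g^{n}$ may be taken supported in a fixed compact set (the commutators produced are of the same or lower order and are absorbed by the same argument). Then I would take the partial Fourier transform in $(t,x)$, with dual variables $(\tau,\xi)$; the transport equation turns into the algebraic identity
\begin{equation}\nonumber
\begin{split}
i(\tau+v\cdot\xi)\,\widehat{f^{n}}(\tau,\xi,v)=\langle(\tau,\xi)\rangle^{\alpha_{2}}\,(I-\Delta_{v})^{\frac{\alpha_{1}}{2}}\widehat{g^{n}}(\tau,\xi,v),\qquad \langle(\tau,\xi)\rangle:=(1+\tau^{2}+|\xi|^{2})^{\frac{1}{2}}.
\end{split}
\end{equation}

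Next I would run a Littlewood--Paley decomposition $f^{n}=\sum_{k\ge0}f^{n}_{k}$ in the frequencies $(\tau,\xi)$, with $f^{n}_{k}$ supported in $|(\tau,\xi)|\sim 2^{k}$ for $k\ge1$ and in $|(\tau,\xi)|\lesssim1$ for $k=0$. The block $f^{n}_{0}$ has uniformly bounded derivatives in $(t,x)$ and is therefore relatively compact, so the whole problem reduces to showing that the high-frequency tail $\sum_{k\ge K}f^{n}_{k}$ is small in $L^{p}$, uniformly in $n$, as $K\to\infty$. For each dyadic block I would split the velocity variable into the resonant slab $R_{k}:=\{v:\ |\tau+v\cdot\xi|\le\delta\,2^{k}\}$ and its complement, where $\delta\in(0,1)$ is a small parameter to be fixed at the end. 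On the non-resonant region one inverts the streaming operator, writing $\widehat{f^{n}_{k}}=\frac{\langle(\tau,\xi)\rangle^{\alpha_{2}}}{i(\tau+v\cdot\xi)}(I-\Delta_{v})^{\alpha_{1}/2}\widehat{g^{n}_{k}}$ and transferring the multiplier $(I-\Delta_{v})^{\alpha_{1}/2}$ onto the symbol $\frac{1}{\tau+v\cdot\xi}$, which is smooth there because $|\tau+v\cdot\xi|\gtrsim\delta 2^{k}$ while $|\xi|\sim 2^{k}$, so each $v$-derivative of it costs only $\lesssim\delta^{-2}2^{-k}$; a Mikhlin--H\"ormander / Coifman--Meyer multiplier estimate in $L^{p}$ then yields $\|f^{n}_{k}\mathbf{1}_{v\notin R_{k}}\|_{L^{p}}\le C(\delta)\,2^{k(\alpha_{2}-1)}\|g^{n}\|_{L^{p}}$. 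Since $\alpha_{2}<1$, summing over $k\ge K$ produces a bound $C(\delta)\,2^{K(\alpha_{2}-1)}\sup_{n}\|g^{n}\|_{L^{p}}$, which tends to $0$ as $K\to\infty$ for each fixed $\delta$.

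It remains to control the resonant contribution $\sum_{k\ge K}f^{n}_{k}\mathbf{1}_{v\in R_{k}}$, and this is exactly where the hypothesis on $v$-compactness is used. The slab $R_{k}$ has width $O(\delta)$ in the $\xi$-direction, hence measure $\lesssim\delta$ inside the support ball. Decomposing $f^{n}=\rho_{\epsilon}\ast_{v}f^{n}+r^{n}_{\epsilon}$ with $\sup_{n}\|r^{n}_{\epsilon}\|_{L^{p}}\to0$ as $\epsilon\to0$ (by local strong $v$-compactness), Young's inequality makes $\rho_{\epsilon}\ast_{v}f^{n}$ locally bounded in $L^{\infty}_{v}$ for each fixed $\epsilon$, so its restriction to the resonant slab has $L^{p}$-norm $\lesssim\delta^{1/p}$. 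Choosing first $K$ large, then $\delta$ small, then $\epsilon$ small, one concludes that $f^{n}$ differs from a relatively compact sequence by a term uniformly small in $L^{p}$, which gives the local strong compactness in all variables.

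The hard part will be the non-resonant multiplier estimate in the $L^{p}$ rather than $L^{2}$ setting: Plancherel is unavailable, so the bound has to be obtained through Littlewood--Paley square-function inequalities, with careful bookkeeping of how the $\delta$-dependent constants (one inverse power of $\delta$ per $v$-derivative) interact with the summable gain $2^{k(\alpha_{2}-1)}$, while simultaneously checking that the smoothing $(I-\Delta_{v})^{\alpha_{1}/2}$ does not spoil the compact support and localization in $v$ that the resonant estimate relies on. Balancing the three parameters $K$, $\delta$, $\epsilon$ in the correct order is the crux of the argument.
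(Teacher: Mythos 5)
First, a point of reference: the paper does not prove Lemma \ref{lemma66} at all --- it is imported verbatim from Ars\'enio--Saint-Raymond \cite{arsenio1} --- so your proposal can only be measured against the strategy of that cited work. Your overall plan is indeed the right one and matches it in spirit: localize, Fourier transform in $(t,x)$, split each dyadic $(\tau,\xi)$-block into the resonant slab $\{|\tau+v\cdot\xi|\lesssim\delta 2^{k}\}$ and its complement, gain the factor $2^{k(\alpha_{2}-1)}$ off resonance by inverting the streaming symbol (this is exactly where $\alpha_{2}<1$ enters), and use the assumed $v$-compactness, via mollification $\rho_{\epsilon}\ast_{v}f^{n}$ and the $O(\delta)$ $v$-measure of the slab, on resonance.

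There is, however, a genuine gap in the way you assemble the pieces: your resonant bound is $\lesssim\delta^{1/p}C_{\epsilon}+\|r^{n}_{\epsilon}\|$ \emph{per block}, with no decay in $k$, so the sum $\sum_{k\ge K}$ of the resonant contributions is simply not controlled --- neither by taking $\delta$ small nor by square-function arguments, since a constant sequence is not summable; and translation differences $\min(1,2^{k}|h|)$ do not rescue it either. The standard (and, in the cited proof, essential) fix is to let the slab width depend on the frequency, $\delta_{k}\sim 2^{-\sigma k}$ with $\sigma>0$ chosen so small that the non-resonant Mikhlin constant, which costs a fixed number of powers $\delta_{k}^{-1}$, still decays like $2^{k(\alpha_{2}-1+C\sigma)}$ with $\alpha_{2}-1+C\sigma<0$; then both the resonant bound ($\lesssim 2^{-\sigma k/p}C_{\epsilon}$ plus a square-summable remainder) and the non-resonant bound decay geometrically and the sum over $k\ge K$ closes. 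Relatedly, your parameter ordering ``first $K$ large, then $\delta$ small, then $\epsilon$ small'' is inconsistent with your own estimate $C(\delta)2^{K(\alpha_{2}-1)}$: the slab parameter (or the exponent $\sigma$) must be fixed before letting $K\to\infty$, with $\epsilon$ chosen last. Two further points you flagged but should not be understated: the sharp indicators $\mathbf{1}_{v\notin R_{k}}$ must be replaced by smooth cut-offs in $(\tau+v\cdot\xi)/(\delta_{k}2^{k})$ for the $L^{p}$ multiplier and Littlewood--Paley machinery to apply for fixed $v$, and ``transferring'' $(I-\Delta_{v})^{\alpha_{1}/2}$ onto the symbol is a parametrized pseudodifferential argument rather than a literal integration by parts when $\alpha_{1}$ is not an even integer. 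With the frequency-dependent slab width and these adjustments, your sketch becomes essentially the argument of \cite{arsenio1}; as written, it does not yet yield the conclusion.
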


Since Lemma \ref{lemma66} involves fractional derivatives, we need the following two lemmas:

\begin{lemma}[\!\!\cite{grafakos1}, pp.~13] \label{lemma67}
Let $d\geq1, p\in(1,\infty)$, $\Omega$ be any open set in $\mathbb{R}^{d} $, and $L^{p,\infty}(\Omega)$ be the weak $L^{p}(\Omega)$ space defined as the set of all measurable function $f$ satisfying
$$
\|f\|_{L^{p,\infty}(\Omega)}:=\sup_{\lambda>0}\lambda |\{x\in\Omega~\big{|}~|f(x)|>\lambda\}|<\infty.
$$
If $|\Omega|<\infty$, then we have
$$
\int_{\Omega}|f(x)|^{q}dx\leq \frac{p}{p-q}|\Omega|^{1-\frac{q}{p}}\|f\|_{L^{p,\infty}(\Omega)}^{q},\quad  \forall q\in[1,p).
$$
\end{lemma}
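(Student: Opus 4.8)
The plan is to reduce everything to the distribution function of $f$ and then optimise over a single truncation parameter. Write $d_{f}(\lambda):=|\{x\in\Omega : |f(x)|>\lambda\}|$ for $\lambda>0$. From the defining property of the weak-$L^{p}$ quasinorm one has the pointwise bound $d_{f}(\lambda)\leq \lambda^{-p}\|f\|_{L^{p,\infty}(\Omega)}^{p}$ for every $\lambda>0$, while trivially $d_{f}(\lambda)\leq|\Omega|$. The first step is to recall the layer-cake identity
\[
\int_{\Omega}|f(x)|^{q}\,dx=q\int_{0}^{\infty}\lambda^{q-1}d_{f}(\lambda)\,d\lambda,
\]
which follows from Fubini's theorem applied to $|f(x)|^{q}=q\int_{0}^{|f(x)|}\lambda^{q-1}\,d\lambda$.

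Next I would split the $\lambda$-integral at an arbitrary threshold $A>0$: on $(0,A)$ use $d_{f}(\lambda)\leq|\Omega|$, and on $(A,\infty)$ use $d_{f}(\lambda)\leq\lambda^{-p}\|f\|_{L^{p,\infty}(\Omega)}^{p}$. Since $q\geq1$ and, crucially, $q<p$, the tail integral $\int_{A}^{\infty}\lambda^{q-1-p}\,d\lambda=\tfrac{A^{q-p}}{p-q}$ converges, and one obtains
\[
\int_{\Omega}|f(x)|^{q}\,dx\leq |\Omega|\,A^{q}+\frac{q}{p-q}\,\|f\|_{L^{p,\infty}(\Omega)}^{p}\,A^{q-p}.
\]
Finally, choose $A$ to minimise the right-hand side (assuming $\|f\|_{L^{p,\infty}(\Omega)}>0$, the case of zero norm being trivial, and $|\Omega|>0$). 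Differentiating in $A$ and setting the result to zero gives $A^{p}=\|f\|_{L^{p,\infty}(\Omega)}^{p}/|\Omega|$, i.e. $A=\|f\|_{L^{p,\infty}(\Omega)}\,|\Omega|^{-1/p}$; this is the only place any choice is made. Substituting back, the two terms become $|\Omega|^{1-q/p}\|f\|_{L^{p,\infty}(\Omega)}^{q}$ and $\tfrac{q}{p-q}|\Omega|^{1-q/p}\|f\|_{L^{p,\infty}(\Omega)}^{q}$ respectively, whose sum is exactly $\tfrac{p}{p-q}|\Omega|^{1-q/p}\|f\|_{L^{p,\infty}(\Omega)}^{q}$, which is the asserted inequality.

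There is no serious obstacle here: this is a classical interpolation-type estimate and the argument is a short computation. The only points to watch are the convergence of the tail integral — which is precisely why the hypothesis $q<p$ is imposed — and the bookkeeping needed to land the sharp constant $\tfrac{p}{p-q}$, which forces the particular optimal choice of $A$ above rather than a crude one.
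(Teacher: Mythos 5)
Your argument is correct, and it is essentially the standard layer-cake proof of this classical estimate (it is the content of the exercise in Grafakos that the paper cites; the paper itself gives no proof, only the reference). The splitting of $q\int_{0}^{\infty}\lambda^{q-1}d_{f}(\lambda)\,d\lambda$ at a threshold $A$, using $d_{f}(\lambda)\leq|\Omega|$ below and $d_{f}(\lambda)\leq\lambda^{-p}\|f\|_{L^{p,\infty}(\Omega)}^{p}$ above, and then optimizing $A=\|f\|_{L^{p,\infty}(\Omega)}|\Omega|^{-1/p}$, does land exactly the constant $\tfrac{p}{p-q}$, and your bookkeeping checks out. One remark worth making explicit: your bound $d_{f}(\lambda)\leq\lambda^{-p}\|f\|_{L^{p,\infty}(\Omega)}^{p}$ presupposes the standard definition $\|f\|_{L^{p,\infty}(\Omega)}=\sup_{\lambda>0}\lambda\,|\{x\in\Omega:|f(x)|>\lambda\}|^{1/p}$, whereas the statement as printed omits the exponent $\tfrac{1}{p}$ on the measure. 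With the printed definition the asserted inequality is not even scale-consistent, so this is a typo in the statement, and your reading (and hence your proof) is the correct one; it would be worth saying so in one sentence rather than silently using the corrected quasinorm.
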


\begin{lemma}[\!\!\cite{grafakos2}, pp.~8]\label{lemma68}
Let $d\geq1, \alpha>0$, and $(I-\Delta)^{-\frac{\alpha}{2}}$ be the Bessel potential of order $\alpha$. Then, we have

(1) $(I-\Delta)^{-\frac{\alpha}{2}}$ maps $L^{p}(\mathbb{R}^{d})$ to itself with norm 1 for any $p\in[1,\infty]$.

(2) Let $\alpha\in(0,d)$ and $q\in(1,\infty)$ satisfy $1-\frac{1}{q}=\frac{\alpha}{d}$. Then there exist a constant $C_{q,d,\alpha}>0$ such that for any $f\in L^{p}(\mathbb{R}^{d})$, we have
\begin{equation}\nonumber
\begin{split}
\|(I-\Delta)^{-\frac{\alpha}{2}}f\|_{L^{q,\infty}(\mathbb{R}^{d})}\leq C_{q,d,\alpha}\|f\|_{L^{1}(\mathbb{R}^{d})}.
\end{split}
\end{equation}
\end{lemma}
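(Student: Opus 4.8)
The plan is to realize $(I-\Delta)^{-\alpha/2}$ as convolution with the Bessel kernel and reduce both assertions to size properties of that kernel. Write $(I-\Delta)^{-\alpha/2}f=G_{\alpha}*f$, where $G_{\alpha}$ is the tempered distribution with $\widehat{G_{\alpha}}(\xi)=(1+4\pi^{2}|\xi|^{2})^{-\alpha/2}$. The first step is the subordination identity
\begin{equation}\nonumber
(1+4\pi^{2}|\xi|^{2})^{-\alpha/2}=\frac{1}{\Gamma(\alpha/2)}\int_{0}^{\infty}e^{-t}\,e^{-4\pi^{2}t|\xi|^{2}}\,t^{\frac{\alpha}{2}-1}\,dt ,
\end{equation}
obtained from $\lambda^{-\alpha/2}=\Gamma(\alpha/2)^{-1}\int_{0}^{\infty}e^{-\lambda t}t^{\alpha/2-1}dt$ with $\lambda=1+4\pi^{2}|\xi|^{2}$. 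Inverting the Fourier transform and using that $e^{-4\pi^{2}t|\xi|^{2}}$ is the transform of the positive Gaussian $(4\pi t)^{-d/2}e^{-|x|^{2}/(4t)}$ exhibits
\begin{equation}\nonumber
G_{\alpha}(x)=\frac{1}{\Gamma(\alpha/2)(4\pi)^{d/2}}\int_{0}^{\infty}e^{-t}\,e^{-|x|^{2}/(4t)}\,t^{\frac{\alpha-d}{2}-1}\,dt\ \ge 0 ,
\end{equation}
and integrating in $x$ gives $\|G_{\alpha}\|_{L^{1}(\mathbb{R}^{d})}=\widehat{G_{\alpha}}(0)=1$. Assertion (1) is then immediate from Young's convolution inequality: $\|G_{\alpha}*f\|_{L^{p}}\le\|G_{\alpha}\|_{L^{1}}\|f\|_{L^{p}}=\|f\|_{L^{p}}$ for every $p\in[1,\infty]$.

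For assertion (2) the idea is to place $G_{\alpha}$ in the Lorentz space $L^{q,\infty}(\mathbb{R}^{d})$ with $q=d/(d-\alpha)$ --- exactly the exponent fixed by $1-1/q=\alpha/d$ --- and then apply the weak-type Young inequality $\|f*g\|_{L^{q,\infty}}\le C_{q,d}\|f\|_{L^{1}}\|g\|_{L^{q,\infty}}$. The membership $G_{\alpha}\in L^{q,\infty}$ follows from the two-sided bound
\begin{equation}\nonumber
0\le G_{\alpha}(x)\le C\,|x|^{\alpha-d}\ \text{ for }\ 0<|x|\le 1 ,\qquad 0\le G_{\alpha}(x)\le C\,e^{-c|x|}\ \text{ for }\ |x|\ge 1 ,
\end{equation}
with constants $C,c>0$: substituting $t=|x|^{2}s$ in the integral representation shows $G_{\alpha}(x)=c_{d,\alpha}|x|^{\alpha-d}\int_{0}^{\infty}e^{-|x|^{2}s}e^{-1/(4s)}s^{\frac{\alpha-d}{2}-1}ds$, whose integral stays bounded as $|x|\to 0$ (the factor $e^{-1/(4s)}$ absorbs the singularity at $s=0$ and $\alpha<d$ gives integrability at $s=\infty$), while for large $|x|$ the Gaussian factor $e^{-|x|^{2}/(4t)}$ forces exponential decay. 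Since $|x|^{\alpha-d}\in L^{d/(d-\alpha),\infty}(\mathbb{R}^{d})$ and the exponential tail lies in every Lorentz space, $\|G_{\alpha}\|_{L^{q,\infty}}<\infty$, and the stated estimate holds with $C_{q,d,\alpha}=C_{q,d}\|G_{\alpha}\|_{L^{q,\infty}}$.

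The Gaussian computation and Young's inequality are routine; the technical heart --- and the step I expect to be the main obstacle --- is the sharp pointwise control of $G_{\alpha}$, namely pinning down the $|x|^{\alpha-d}$ behaviour at the origin uniformly in the range $0<\alpha<d$ and the exponential decay at infinity, together with the statement and proof of the weak-type Young inequality. All of these are classical and available in Stein's book or in Grafakos, which is why the paper simply cites the result rather than reproducing the argument.
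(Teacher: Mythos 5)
Your proof is correct and follows essentially the same route as the source the paper cites for this lemma (the paper itself gives no proof, quoting Grafakos): the subordination formula for the Bessel kernel $G_{\alpha}$, its positivity and unit mass yielding part (1) via Young's inequality, and the $|x|^{\alpha-d}$ singularity at the origin together with exponential decay at infinity placing $G_{\alpha}\in L^{q,\infty}$ with $q=d/(d-\alpha)$, so that the weak-type Young inequality $L^{1}\ast L^{q,\infty}\subseteq L^{q,\infty}$ gives part (2). The only cosmetic point is that for the operator norm in (1) to equal $1$ (not merely be $\le 1$) one tests on nonnegative $f\in L^{1}$ or on the constant function in $L^{\infty}$, and that the $\|f\|_{L^{p}}$ appearing in the paper's statement of (2) is a typo for $\|f\|_{L^{1}}$, which is exactly what you prove.
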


\vspace{2ex}

The following lemma about the renormalized solutions for continuity equations is due to Diperna-Lions:
\begin{lemma}[\!\!\cite{diperna2,lions1}]\label{lemma69}
For any $T>0$, assume
$$
\rho \in L^{\infty}(0,T;L^{\infty}(\mathbb{T})), \quad u\in L^2(0,T;H^{1}(\mathbb{T})), \quad g\in L^1(0,T;L^1(\mathbb{T})).
 $$
If $\rho$ solves the equation
\begin{equation}\label{ltransport}
\begin{split}
\rho_{t}+(\rho u)_{x}=g\quad \text{in}~\mathcal{D}'(\mathbb{T}\times(0,T)),
\end{split}
\end{equation}
 then $\rho$ is a renormalized solution to $(\ref{ltransport})$; i.e., for any $b(s)\in C^{1}(\mathbb{R})$, it holds
\begin{equation}\label{610}
\begin{split}
(b(\rho))_{t}+(b(\rho)u)_{x}+(b'(\rho)\rho-b(\rho))u_{x}=g b'(\rho)\quad\text{in}~\mathcal{D}'(\mathbb{T}\times(0,T)),
\end{split}
\end{equation}
and we have $\rho\in C([0,T];L^{p}(\mathbb{T}))$ for any $p\in[1,\infty)$.
\end{lemma}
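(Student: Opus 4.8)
The plan is to run the classical mollification-and-commutator argument of DiPerna--Lions. Fix a nonnegative mollifier $(J_{\eta})_{\eta>0}$ on $\mathbb{T}$ and set $\rho_{\eta}:=\rho\ast J_{\eta}$, $g_{\eta}:=g\ast J_{\eta}$. Since $\rho\in L^{\infty}$, after replacing $b$ by a $C^{1}(\mathbb{R})$ function with bounded derivative that agrees with it on $[-R,R]$, $R:=\|\rho\|_{L^{\infty}}+1$ (and noting $\|\rho_{\eta}\|_{L^{\infty}}\le\|\rho\|_{L^{\infty}}$), one may assume without loss of generality that $b$ and $b'$ are bounded and uniformly continuous. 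Convolving $(\ref{ltransport})$ with $J_{\eta}$ gives, in $\mathcal{D}'(\mathbb{T}\times(0,T))$,
\begin{equation}\nonumber
(\rho_{\eta})_{t}+(u\rho_{\eta})_{x}=g_{\eta}+r_{\eta},\qquad r_{\eta}:=(u\rho_{\eta})_{x}-\big{(}(u\rho)\ast J_{\eta}\big{)}_{x}.
\end{equation}
Because $\mathbb{T}$ has finite measure, $u\in L^{2}(0,T;H^{1}(\mathbb{T}))\hookrightarrow L^{2}(0,T;L^{\infty}(\mathbb{T}))$ and $\rho\in L^{\infty}$, the DiPerna--Lions commutator lemma gives $r_{\eta}\to 0$ in $L^{1}((0,T)\times\mathbb{T})$ as $\eta\to 0$. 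This commutator estimate is the heart of the proof and the only step that is not routine; in one space dimension it is however short, since one bounds $\|r_{\eta}(\cdot,t)\|_{L^{1}(\mathbb{T})}\le C\|\rho\|_{L^{\infty}}\|u_{x}(\cdot,t)\|_{L^{2}(\mathbb{T})}$ uniformly in $\eta$ and then passes to the limit by a Friedrichs-type density argument.

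For a.e. $t$ the function $\rho_{\eta}(\cdot,t)$ is smooth in $x$ and $\partial_{t}\rho_{\eta}\in L^{1}$, so the chain rule applies exactly to the mollified equation:
\begin{equation}\nonumber
(b(\rho_{\eta}))_{t}+(u\,b(\rho_{\eta}))_{x}+\big{(}b'(\rho_{\eta})\rho_{\eta}-b(\rho_{\eta})\big{)}u_{x}=b'(\rho_{\eta})\big{(}g_{\eta}+r_{\eta}\big{)}\quad\text{in}~\mathcal{D}'(\mathbb{T}\times(0,T)).
\end{equation}
Now let $\eta\to 0$. One has $\rho_{\eta}\to\rho$ strongly in $L^{p}((0,T)\times\mathbb{T})$ for every $p<\infty$ and a.e., hence $b(\rho_{\eta})\to b(\rho)$ and $b'(\rho_{\eta})\to b'(\rho)$ in every such $L^{p}$ by the uniform continuity and boundedness of $b,b'$; moreover $u_{x}\in L^{2}$, $g_{\eta}\to g$ in $L^{1}$, $r_{\eta}\to 0$ in $L^{1}$, and $b'(\rho_{\eta})$ is uniformly bounded. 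Every term therefore converges in $\mathcal{D}'(\mathbb{T}\times(0,T))$, which yields $(\ref{610})$ first for the truncated $b$, and then for the original $b\in C^{1}(\mathbb{R})$, since the truncation agrees with it on the range of $\rho$.

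It remains to establish $\rho\in C([0,T];L^{p}(\mathbb{T}))$ for all $p\in[1,\infty)$. From $(\ref{ltransport})$, $\rho u\in L^{2}(0,T;L^{2}(\mathbb{T}))$ and $g\in L^{1}(0,T;L^{1}(\mathbb{T}))\hookrightarrow L^{1}(0,T;H^{-1}(\mathbb{T}))$, so $\partial_{t}\rho\in L^{1}(0,T;H^{-1}(\mathbb{T}))$ and hence $\rho\in C([0,T];H^{-1}(\mathbb{T}))$; being bounded in $L^{2}$, $\rho\in C([0,T];L^{2}_{weak}(\mathbb{T}))$. Applying $(\ref{610})$ with $b(s)=s^{2}$ (truncated as above) and integrating over $\mathbb{T}$ gives $\frac{d}{dt}\int_{\mathbb{T}}\rho^{2}\,dx=-\int_{\mathbb{T}}\rho^{2}u_{x}\,dx+2\int_{\mathbb{T}}\rho g\,dx$, whose right-hand side lies in $L^{1}(0,T)$, so $t\mapsto\|\rho(\cdot,t)\|_{L^{2}(\mathbb{T})}^{2}$ is continuous. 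Weak continuity together with continuity of the norm in the Hilbert space $L^{2}(\mathbb{T})$ forces $\rho\in C([0,T];L^{2}(\mathbb{T}))$. Finally, using $|\mathbb{T}|=1$ and $\rho\in L^{\infty}$,
\begin{equation}\nonumber
\|\rho(\cdot,t)-\rho(\cdot,s)\|_{L^{p}(\mathbb{T})}^{p}\le(2\|\rho\|_{L^{\infty}})^{p-1}\|\rho(\cdot,t)-\rho(\cdot,s)\|_{L^{1}(\mathbb{T})}\le(2\|\rho\|_{L^{\infty}})^{p-1}\|\rho(\cdot,t)-\rho(\cdot,s)\|_{L^{2}(\mathbb{T})},
\end{equation}
which gives $\rho\in C([0,T];L^{p}(\mathbb{T}))$ for all $p\in[1,\infty)$ and completes the plan. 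The main obstacle, as noted, is the vanishing of the commutator $r_{\eta}$ in $L^{1}$.
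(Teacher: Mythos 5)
Your proof is correct: it is the classical DiPerna--Lions mollification-plus-commutator argument (with the truncation of $b$ justified by $\rho\in L^{\infty}$, and the $L^{2}$-norm continuity obtained from the renormalized equation with $b(s)=s^{2}$ combined with weak continuity), which is exactly the approach of the references \cite{diperna2,lions1} that the paper invokes, since the paper states this lemma with a citation and gives no proof of its own. No gaps worth flagging; the only step requiring care, the vanishing of the commutator $r_{\eta}$ in $L^{1}$, is correctly identified and is covered by the standard bilinear estimate plus density argument you describe.
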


Finally, as in \cite{bris1}, we show the following result about the renormalized solutions for Fokker-Planck equations with variable coefficients:
\begin{lemma}\label{lemma610}
For any $T>0$, assume
\begin{equation}\nonumber
\left\{
\begin{split}
&f\in L^{\infty}(0,T;L^1_{2}\cap L^{\infty}(\mathbb{T}\times\mathbb{R})),\quad  f_{v}\in L^2(0,T;L^2(\mathbb{T}\times\mathbb{R})),\\
 & \rho\in L^{\infty}(0,T;L^{\infty}(\mathbb{T})), \quad u\in L^2(0,T;L^{\infty}(\mathbb{T})), \quad G\in L^{1}(0,T;L^1(\mathbb{T}\times\mathbb{R})).
 \end{split}
 \right.
 \end{equation}
 If $f$ solves the equation 
\begin{equation}\label{lfp}
\begin{split}
f_{t}+vf_{x}+(\rho(u-v)f)_{v}-\rho f_{vv}=G\quad\text{in}~\mathcal{D}'(\mathbb{T}\times\mathbb{R}\times(0,T)),
 \end{split}
 \end{equation}
then $f$ is a renormalized solution to $(\ref{lfp})$; i.e., for any $b(s)\in C^2(\mathbb{R})$, it holds
\begin{equation}
\begin{split}
&(b(f))_{t}+v(b(f))_{x}+(\rho(u-v)b(f))_{v}-(\rho b(f))_{vv}\\
&\quad\quad+\rho (b(f)-b'(f)f) +b''(f)|f_{v}|^2= b'(f)G\quad \text{in}~\mathcal{D}'(\mathbb{T}\times\mathbb{R}\times(0,T)).\label{lfpd}
\end{split}
\end{equation}
\end{lemma}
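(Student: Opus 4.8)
The plan is to establish the renormalization identity $(\ref{lfpd})$ by the classical DiPerna--Lions regularization-by-convolution argument, adapted to the Fokker--Planck operator with variable coefficients $\rho(x,t)$ and $u(x,t)$. First I would mollify: let $J^{\eps}$ be a standard mollifier in the $(x,v)$ variables and set $f^{\eps}:=f\ast J^{\eps}$. Convolving $(\ref{lfp})$ with $J^{\eps}$ gives
\begin{equation}\nonumber
\begin{split}
f^{\eps}_{t}+vf^{\eps}_{x}+(\rho(u-v)f)_{v}\ast J^{\eps}-(\rho f_{vv})\ast J^{\eps}=G\ast J^{\eps}+r^{\eps},
\end{split}
\end{equation}
where the commutator $r^{\eps}:=v f^{\eps}_{x}-(vf_{x})\ast J^{\eps}$ comes only from the transport part $v\partial_{x}$ (a linear term with smooth coefficient $v$), and by the Friedrichs commutator lemma $r^{\eps}\to 0$ in $L^{2}_{loc}$, using $f\in L^{\infty}(0,T;L^{\infty})$. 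For the $v$-derivative terms I would rewrite $(\rho(u-v)f)_{v}-\rho f_{vv}=\rho(u-v)f_{v}-\rho f-\rho f_{vv}$, so that after convolution the only genuinely singular objects are $(\rho(u-v)f_{v})\ast J^{\eps}$ and $(\rho f_{vv})\ast J^{\eps}=(\rho f_{v})_{v}\ast J^{\eps}-(\rho_{v} f_{v})\ast J^{\eps}$; since $\rho$ has no $v$-dependence this last term vanishes, which is what makes the argument clean in $v$. The remaining commutators, e.g. $\rho(u-v)f^{\eps}_{v}-(\rho(u-v)f_{v})\ast J^{\eps}$, are treated by the DiPerna--Lions commutator estimate: the coefficient $\rho(u-v)$ lies in $L^{2}_{t}L^{\infty}_{x}$ locally in $v$ and $f_{v}\in L^{2}_{t,x,v}$, so these commutators converge to $0$ strongly in $L^{1}_{loc}$.

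Once $f^{\eps}$ satisfies the regularized equation with controlled error terms, I would multiply by $b'(f^{\eps})$ (legitimate since $f^{\eps}$ is smooth in $(x,v)$ and absolutely continuous in $t$ after the mollification), use the chain rule to produce
\begin{equation}\nonumber
\begin{split}
(b(f^{\eps}))_{t}+v(b(f^{\eps}))_{x}+\rho(u-v)(b(f^{\eps}))_{v}-\rho(b(f^{\eps}))_{vv}+\rho b''(f^{\eps})|f^{\eps}_{v}|^2=b'(f^{\eps})\big(G\ast J^{\eps}+(\text{commutators})\big),
\end{split}
\end{equation}
and then rewrite the first-order term via $\rho(u-v)(b(f^{\eps}))_{v}=(\rho(u-v)b(f^{\eps}))_{v}+\rho b(f^{\eps})$ and the second-order term via $\rho(b(f^{\eps}))_{vv}=(\rho b(f^{\eps}))_{vv}$ (again using $\rho_{v}=0$), producing exactly the structure of $(\ref{lfpd})$ with an extra $\rho(b(f^{\eps})-b'(f^{\eps})f^{\eps})$ term. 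Finally I would pass to the limit $\eps\to0$ in the sense of distributions: $f^{\eps}\to f$ a.e. and boundedly, so $b(f^{\eps})\to b(f)$, $b'(f^{\eps})\to b'(f)$, $b''(f^{\eps})\to b''(f)$ boundedly (here one uses $b\in C^{2}$ and $f\in L^{\infty}$, so $b,b',b''$ are evaluated on a compact set and are bounded); the term $b''(f^{\eps})|f^{\eps}_{v}|^2$ requires $f^{\eps}_{v}\to f_{v}$ strongly in $L^{2}_{loc}$, which holds because $f_{v}\in L^{2}$ and mollification converges strongly in $L^{2}$; and $b'(f^{\eps})(G\ast J^{\eps})\to b'(f)G$ in $L^{1}_{loc}$ while $b'(f^{\eps})\times(\text{commutators})\to 0$ in $L^{1}_{loc}$ since $b'(f^{\eps})$ is uniformly bounded.

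The main obstacle I expect is the handling of the two $v$-derivatives in $\rho f_{vv}$ together with the quadratic term $b''(f)|f_{v}|^2$, since a priori $f_{vv}$ is only a distribution and the chain rule for $b$ applied to $\rho(b(f))_{vv}$ must be justified carefully: the correct route is not to commute $b'$ past $\partial_{v}^2$ directly, but to first integrate one derivative by parts at the mollified level, writing $\rho f^{\eps}_{vv}=(\rho f^{\eps}_{v})_{v}$, multiply by $b'(f^{\eps})$, use $b'(f^{\eps})(\rho f^{\eps}_{v})_{v}=(\rho b'(f^{\eps})f^{\eps}_{v})_{v}-\rho b''(f^{\eps})|f^{\eps}_{v}|^2=(\rho(b(f^{\eps}))_{v})_{v}-\rho b''(f^{\eps})|f^{\eps}_{v}|^2$, and only then pass to the limit using the strong $L^{2}_{loc}$ convergence of $f^{\eps}_{v}$; this is exactly why the hypothesis $f_{v}\in L^{2}(0,T;L^{2})$ (rather than merely $f\in L^{2}_{t}H^{1}_{v}$ distributionally) is essential and is used in full strength. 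The velocity weights in $L^{1}_{2}$ play no role in the renormalization itself — they only guarantee that the moments appearing elsewhere in the paper are finite — so the proof is purely local in $(x,v,t)$, and I would state it for test functions in $\mathcal{D}(\mathbb{T}\times\mathbb{R}\times(0,T))$ throughout.
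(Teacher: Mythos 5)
Your proposal is correct and follows essentially the same route as the paper's proof: mollify in $(x,v)$ with a Friedrichs kernel, control the resulting commutators (using $f\in L^{\infty}$, $f_{v}\in L^{2}$, $\rho\in L^{\infty}$, $u\in L^{2}_{t}L^{\infty}_{x}$ and the fact that $\rho$ is $v$-independent), multiply the regularized equation by $b'(f^{\varepsilon})$, and pass to the limit using the strong $L^{2}$ convergence of $f^{\varepsilon}_{v}$ and the boundedness of $b',b''$ on the range of $f$. The only differences (grouping the commutators slightly differently and working locally in $v$ instead of with weighted norms) are cosmetic.
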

\begin{proof}
Denote
$$
f^{\varepsilon}:=J^{\varepsilon}\ast f,\quad J^{\varepsilon}(x,v):=J_{1}^{\varepsilon}(x)J_{2}^{\varepsilon}(v),
$$
where $J_{1}^{\varepsilon}(x)$ and $J_{2}^{\varepsilon}(v)$ are the Friedrichs mollifier for $\varepsilon>0$ with respect to the variables $x$ and $v$ respectively. Applying $J^{\varepsilon}\ast$ to the equation (\ref{lfp}), we obtain
\begin{equation}\label{fapp}
\begin{split}
&(f^{\varepsilon})_{t}+v(f^{\varepsilon})_{x}+(\rho(u-v)f^{\varepsilon})_{v}-\rho (f^{\varepsilon})_{vv}-G\ast J^{\varepsilon}\\
&\quad= v(f^{\varepsilon})_{x}-(vf)\ast J^{\varepsilon}+[(\rho(u-v)f^{\varepsilon}-(\rho(u-v)f)\ast  J^{\varepsilon}]_{v}+[(\rho f_{v})\ast J^{\varepsilon}-\rho (f^{\varepsilon})_{v}]_{v}.
\end{split}
\end{equation}
Multiplying (\ref{fapp}) by $b'(f^{\varepsilon})$ and using the fact $
-b'(f^{\varepsilon})(f^{\varepsilon})_{vv}=-b(f^{\varepsilon})_{vv}+b''(f^{\varepsilon})|(f^{\varepsilon})_{v}|^2$, we have
\begin{equation}\label{lfpp}
\begin{split}
&b(f^{\varepsilon})_{t}+vb(f^{\varepsilon})_{x}+(\rho(u-v)b(f^{\varepsilon}))_{v}-\rho b(f^{\varepsilon})_{vv}-b'(f^{\varepsilon})G\ast J^{\varepsilon}\\
&\quad\quad-\rho (b'(f^{\varepsilon})f^{\varepsilon}-b(f^{\varepsilon}))+\rho b''(f^{\varepsilon})|(f^{\varepsilon})_{v}|^2=\mathcal{R}_{1}^{\varepsilon}+(\mathcal{R}_{2}^{\varepsilon})_{v}+\mathcal{R}_{3}^{\varepsilon}+(\mathcal{R}^{\varepsilon}_{4})_{v}+\mathcal{R}^{\varepsilon}_{5},
\end{split}
\end{equation}
where $\mathcal{R}_{i}^{\varepsilon}$, $i=1,...,5$, are given as
\begin{equation}\nonumber
\begin{split}
&\mathcal{R}_{1}^{\varepsilon}:=b'(f^{\varepsilon})[v(f^{\varepsilon})_{x}-(vf_{x})\ast J^{\varepsilon}],\\
&\mathcal{R}_{2}^{\varepsilon}:=b'(f^{\varepsilon})[\rho(u-v)f^{\varepsilon}-(\rho(u-v)f)\ast  J^{\varepsilon}],\\
&\mathcal{R}_{3}^{\varepsilon}:=-b''(f^{\varepsilon})(f^{\varepsilon})_{v}[(\rho(u-v)f)\ast  J^{\varepsilon}-\rho(u-v)f^{\varepsilon}],\\
&\mathcal{R}_{4}^{\varepsilon}:=b'(f^{\varepsilon})[(\rho f_{v})\ast J^{\varepsilon}-\rho (f^{\varepsilon})_{v}],\\
&\mathcal{R}_{5}^{\varepsilon}:=-b''(f^{\varepsilon})(f^{\varepsilon})_{v}[(\rho f_{v})\ast J^{\varepsilon}-\rho (f^{\varepsilon})_{v}].
\end{split}
\end{equation}
Since it holds as $\varepsilon\rightarrow 0$ that
\begin{equation}\nonumber
\left\{
\begin{split}
&f^{\varepsilon}\rightarrow f \quad\text{in $L^{\infty}(0,T;L^1_{2}\cap L^2_{1}(\mathbb{T}\times\mathbb{R}))$},\\
&(f^{\varepsilon})_{v}\rightarrow f_{v} \quad\text{in $L^{2}(0,T;L^{2}(\mathbb{T}\times\mathbb{R}))$},\\
&G\ast J^{\varepsilon}\rightarrow G \quad\text{in $L^{1}(0,T;L^1(\mathbb{T}\times\mathbb{R}))$},
\end{split}
\right.
\end{equation}
one can conclude (\ref{lfpd}) provided that the right-hand side of (\ref{lfpp}) tends to $0$ as $\varepsilon\rightarrow 0$ in the sense of distributions. To show it, we first have
\begin{equation}\nonumber
\begin{split}
&|R_{1}^{\varepsilon}|=\Big{|}b'(f^{\varepsilon})\int_{\mathbb{T}\times\mathbb{R}} ( J^{\varepsilon})_{x}(x-x_{1},x-v_{1})(v-v_{1})f(x_{1},v_{1},t)dv_{1}dx_{1}\Big{|}\\
&\quad~~=\Big{|}b'(f^{\varepsilon})\int_{\{|x-x_{1}|\leq \varepsilon,|v-v_{1}|\leq \varepsilon\}} \frac{v-v_{1}}{\varepsilon} J_{x_{1}}^{\varepsilon}(x-x_{1},v-v_{1})(f(x_{1},v_{1},t)-f(x,v,t))dv_{1}dx_{1}\Big{|}\\
&\quad~~\leq C\sup_{h\in (0,\varepsilon]}\int_{\mathbb{T}\times\mathbb{R}}|J_{x_{1}}^{\varepsilon}(x-x_{1},v-v_{1})|| f(x_{1},v_{1},t)-f(x_{1}+h,v_{1}+h,t)|dv_{1}dx_{1},
\end{split}
\end{equation}
which together with the Young inequality of convolution type gives rise to
\begin{equation}\nonumber
\begin{split}
&\|R_{1}^{\varepsilon}\|_{L^{1}(0,T;L^1(\mathbb{T}\times\mathbb{R}))}\leq C\sup _{h\in (0,\varepsilon]}\|f(\cdot+h,\cdot+h,\cdot)-f(\cdot,\cdot,\cdot)\|_{L^{1}(0,T;L^1(\mathbb{T}\times\mathbb{R}))}\rightarrow  0,\quad \text{as}~\varepsilon\rightarrow0.
\end{split}
\end{equation}
Due to $f\in L^{\infty}(0,T;L^2_{1}(\mathbb{T}\times\mathbb{R}))$ and $\rho(u-v)f\in L^2(0,T;L^2(\mathbb{T}\times\mathbb{R}))$, it holds as $\varepsilon\rightarrow 0$ that
\begin{equation}\nonumber
\left\{
\begin{split}
&\|R_{2}^{\varepsilon}\|_{L^{2}(0,T;L^2(\mathbb{T}\times\mathbb{R}))}\\
&\quad\leq \|\rho(u-v)f-(\rho(u-v)f)\ast J^{\varepsilon}\|_{L^2(0,T;L^2(\mathbb{T}\times\mathbb{R}))}\\
&\quad\quad+C\big{(}1+\|u\|_{L^2(0,T;L^{\infty}(\mathbb{T}))}\big{)}\|f^{\varepsilon}-f\|_{L^2(0,T;L^2_{1}(\mathbb{T}\times\mathbb{R}))}\rightarrow  0,\\
&\|R_{3}^{\varepsilon}\|_{L^{1}(0,T;L^1(\mathbb{T}\times\mathbb{R}))}\\
&\quad\leq C\|(f^{\varepsilon})_{v}\|_{L^2(0,T;L^2(\mathbb{T}\times\mathbb{R}))}\|\rho(u-v)f-(\rho(u-v)f)\ast J^{\varepsilon}\|_{L^2(0,T;L^2(\mathbb{T}\times\mathbb{R}))}\\
&\quad\quad+C\big{(}1+\|u\|_{L^2(0,T;L^{\infty}(\mathbb{T}))}\big{)}\|(f^{\varepsilon})_{v}\|_{L^2(0,T;L^2(\mathbb{T}\times\mathbb{R}))}\|f^{\varepsilon}-f\|_{L^{\infty}(0,T;L^2_{1}(\mathbb{T}\times\mathbb{R}))}\rightarrow  0.
\end{split}
\right.
\end{equation}
Similarly, it is easy to prove
\begin{equation}\nonumber
\begin{split}
&\|R_{4}^{\varepsilon}\|_{L^2(0,T;L^2(\mathbb{T}\times\mathbb{R}))}+\|R_{5}^{\varepsilon}\|_{L^1(0,T;L^1(\mathbb{T}\times\mathbb{R}))}\rightarrow  0,\quad\text{as}~\varepsilon\rightarrow 0.
\end{split}
\end{equation}
The proof of Lemma \ref{lemma610} is completed.
\end{proof}

\textbf{Acknowledgments.} The authors are grateful to the referees for the
valuable comments and the helpful suggestions on the manuscript.

The research of the paper is supported by National Natural Science Foundation of China (No.11931010, 11671384, and 11871047) and by the key research project of Academy for Multidisciplinary Studies, Capital Normal University, and by the Capacity Building for Sci-Tech Innovation-Fundamental Scientific Research Funds (No.007/20530290068).



\end{document}